\newtheorem{theorem}{Theorem}[section]
\newtheorem*{theoConv}{Theorem \ref{thm_contraction_tests}}
\newtheorem{proposition}[theorem]{Proposition}
\newtheorem{lemma}[theorem]{Lemma}
\newtheorem*{lemmaConj}{Lemma \ref{Lem:Conjugacy}}
\newtheorem*{lemmaSmallBall}{Lemma \ref{Lemma:AnotherSmallBall}}
\newtheorem*{lemmaPriorProb}{Lemma \ref{lem:prior_prob_event}}
\newtheorem*{lemmaBias}{Lemma \ref{lem:bias}}
\newtheorem*{CorollaryPostMean1}{Corollary \ref{Cor:PostMean1}}
\newtheorem*{CorollaryPostMean2}{Corollary \ref{Cor:PostMean2}}
\newtheorem{corollary}[theorem]{Corollary}
\theoremstyle{remark}
\newtheorem{remark}{Remark}[section]
\newtheorem{example}{Example}[section]
\newtheorem{condition}{Condition}[section]
\renewcommand{\H}{\mathbb{H}}
\newcommand{\R}{\mathbb{R}}
\newcommand{\T}{\mathbb{T}}
\newcommand{\Z}{\mathbb{Z}}
\newcommand{\N}{\mathbb{N}}
\newcommand{\mF}{\mathcal{F}}
\newcommand{\mB}{\mathcal{B}}
\DeclareMathOperator*{\argmin}{arg\,min}
\newcommand{\eps}{\varepsilon}
\newenvironment{enumerate*}%
\let\dolarger\relsize} 
\def\dolarger#1{\larger[#1]}} 
\newcommand*\@@bigtimes[2]{\vphantom{\prod} 
  \vcenter{\hbox{\dolarger{4}$\m@th#1\mkern-2mu\times\mkern-2mu$}}} 
\newcommand*\bigtimes{\mathop{\mathpalette\@@bigtimes\relax}\displaylimits} 
\def\B{\mathbb{B}}\def\G{\mathbb{G}}\def\H{\mathbb{H}}\def\N{\mathbb{N}}\def\R{\mathbb{R}}\def\Z{\mathbb{Z}}\def\1{\mathbbm{1}}
\def\Bcal{\mathcal{B}}\def\Fcal{\mathcal{F}}\def\Lcal{\mathcal{L}}\def\Qcal{\mathcal{Q}}\def\Scal{\mathcal{S}}\def\Zcal{\mathcal{Z}}
\begin{document}

\begin{frontmatter}
\title{Nonparametric Bayesian inference for reversible multi-dimensional diffusions}
\runtitle{Bayesian reversible diffusions}

\begin{aug}
\author[A]{\fnms{Matteo} \snm{Giordano}\ead[label=e1]{mg846@cam.ac.uk}}
and
\author[B]{\fnms{Kolyan} \snm{Ray}\ead[label=e2]{kolyan.ray@imperial.ac.uk}}
\address[A]{Department of Pure Mathematics and Mathematical Statistics, University of Cambridge,
\printead{e1}}

\address[B]{Department of Mathematics,
Imperial College London,
\printead{e2}}
\end{aug}

\begin{abstract}
We study nonparametric Bayesian models for reversible multidimensional
diffusions with periodic drift. For continuous observation paths, reversibility is exploited to prove a general posterior contraction rate theorem for the drift gradient vector field under approximation-theoretic conditions on the induced prior for the invariant measure. The general theorem is applied to Gaussian priors and $p$-exponential priors, which are shown to converge to the truth at the optimal nonparametric rate over Sobolev smoothness classes in any dimension.
\end{abstract}

\begin{keyword}[class=MSC2020]
\kwd[Primary ]{62G20}
\kwd[; secondary ]{62F15}
\kwd{60J60}
\end{keyword}

\begin{keyword}
\kwd{Bayesian nonparametrics}
\kwd{multi-dimensional diffusions}
\kwd{reversibility}
\kwd{Gaussian processes}
\kwd{Laplace prior}
\end{keyword}

\end{frontmatter}
\tableofcontents

\section{Introduction}

Consider observing a continuous trajectory $X^T = (X_t= (X_t^1,\dots,$ $X_t^d): 0\leq t \leq T)$ of the multi-dimensional Markov diffusion process given by the solution to the stochastic differential equation (SDE)
\begin{equation}\label{model}
	dX_t = b(X_t) dt + dW_t, \qquad X_0 = x_0 \in \R^d, \quad t\geq 0,
\end{equation}
where $W_t = (W_t^1,\dots,W_t^d)$ is a standard Brownian motion on $\R^d$ and $b = (b_1,\dots,b_d)$ is a Lipschitz vector field. We are interested in nonparametric Bayesian inference on the drift term $b$ in the practically important case when the diffusion process is \textit{time-reversible}. By a result of Kolmogorov, this is equivalent to the drift $b$ equalling the gradient vector field $\nabla B$ of a potential function $B: \R^d \to \R$ (e.g.~\cite{bakry2014}, p. 46).

The SDE \eqref{model} with $b = \nabla B$ is the Brownian dynamics model in physics, describing the trajectory of a particle diffusing in a potential energy field that exerts a force directed towards its local extrema. One is often interested in inference on the potential $B$ (or drift $\nabla B$), which carries important physical information, based on an observed trajectory of the particle. For example, such dynamics arise in the Smoluchowski-Kramers approximation to the Langevin equation for the motion of a chemically bound particle \cite{K40,S80}, in which case $B$ describes the chemical bonding forces. Other applications of the reversible diffusion model \eqref{model} in physics and chemistry include vacancy diffusion and Lennard-Jones clusters \cite{PS10,pinski2012} and chemical reaction equations \cite{bolhuis2002}. Once fitted, such diffusive models can also be employed as computationally affordable emulators of the physical process \cite{GSMH19}.

	A Bayesian who wants to model reversible diffusion dynamics must do so explicitly via the prior, namely by constructing one that draws gradient vector fields for $b$. The natural approach is then to directly place a prior on the potential $B$ rather than on $b$, which is the approach we pursue here. Moreover, $B$ typically has a strong physical meaning and estimating it is often the primary inferential goal, in which case explicitly modelling the potential provides interpretable inference. Our aim is to provide theoretical convergence guarantees as the time horizon $T\to\infty$ for this Bayesian approach, which arises directly from physical considerations in the modelling step.

	The theoretical performance of nonparametric Bayesian procedures for drift estimation is well-studied in the one-dimensional case $d=1$ \cite{vdMeulen2006,PSvZ13,GS14,vWvZ16,NS17,A18,vW2019}. However, much less is known in the general multi-dimensional setting $d\geq 2$. In the continuous observation model, Dalalyan and Rei\ss ~\cite{DR07} established pointwise convergence rates and Strauch \cite{S15,S16,S18} obtained adaptive rates, both using multivariate kernel-type estimators. Schmisser \cite{S13} established adaptive $L^2$ convergence rates for certain penalised least-squares estimators with discrete observations. Nickl and Ray \cite{nicklray2020} obtained $L^2$ and $L^\infty$ posterior contraction rates, as well as Bernstein-von Mises results, for certain \textit{non-reversible} drift vector fields - more discussion can be found below.

	In this paper, we obtain contraction rates for the posterior distribution of $B$ about the true potential $B_0$ in the model \eqref{model} as $T\to\infty$. We prove a general theorem for diffusions governed by gradient vector fields based on the classical testing approach of Bayesian nonparametrics \cite{ghosal2000,vdvaart2008}, and firstly apply it to Gaussian priors for $B$, obtaining optimal rates over Sobolev smoothness classes in any dimension. Gaussian priors are widely used in diffusion models \cite{PSvZ13,RBO13,GS14,vWvZ16,vW2019,BRO18,nicklray2020} and are a canonical choice, not least for computational reasons \cite{PPRS12, RBO13,vdMS17}. Our results thus provide statistical convergence guarantees for these widely used priors. As a consequence, we also deduce convergence rates for the corresponding maximum a-posteriori (MAP) estimates, which can be viewed as penalized maximum likelihood estimators. Apart from the results in \cite{nicklray2020}, these are the first multi-dimensional Bayesian contraction results for diffusions, and the first for potential-modelling priors.

	In applications, the potential $B$ is often spatially inhomogeneous for physical reasons, for instance being spiky in some regions and flat or smooth in others (e.g.~molecular communication \cite{GLCAP11}), see Figure \ref{fig:het_potential} for an illustrative example. Gaussian priors are known to be unsuited to modelling such inhomogeneities (see e.g.~\cite{ADH20,AW21,GRSH22})), which has motivated the use of heavier tailed priors, especially Besov space priors in the inverse problems \cite{R13,ABDH18,DS17,ADH20,AW21} and medical imaging \cite{SE15,VEtAl09} communities. To address such physically motivated situations, we also consider modelling the potential using heavier-tailed $p$-exponential priors \cite{ADH20}, for which we establish optimal nonparametric rates. Such priors have been employed to recover spatially inhomogeneous functions, such as those with a `blocky' structure with sudden changes from one block to another. They  have a number of attractive properties including edge preservation, discretization invariance \cite{LSS09,L12}, promoting sparse solution representations, while also maintaining a log-concave structure that aids posterior sampling. Contraction rates for such priors have recently been obtained in direct linear models \cite{ADH20}, and we present here a first extension to a nonlinear diffusion setting.

\begin{figure}\label{fig:het_potential}
\includegraphics[width=0.495\textwidth]{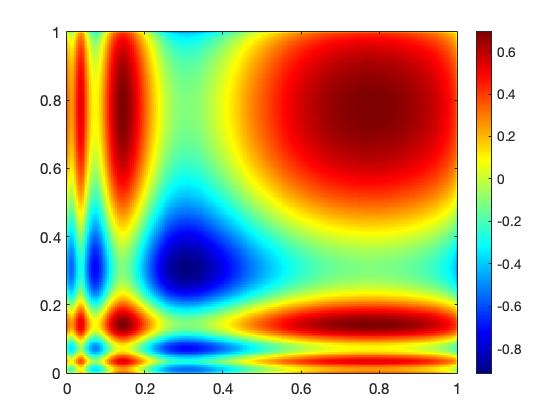}
\includegraphics[width=0.495\textwidth]{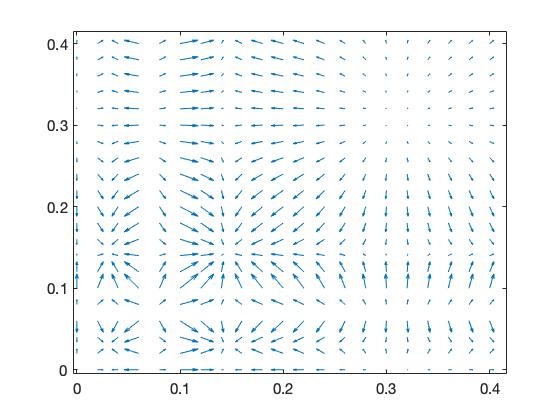}
\caption{An example of a spatially heterogeneous potential $B$ (left) and the corresponding gradient vector field $\nabla B$ (right). Note the axis scales are different for clarity.}
\end{figure}

	The multi-dimensional case $d \geq 2$ is intrinsically more challenging than the one-dimensional case. The testing approach has been used for model \eqref{model} first by van der Meulen et al. \cite{vdMeulen2006} to obtain contraction rates in the `natural distance' induced by the statistical experiment, which is a `random Hellinger semimetric' depending on the observation path $(X_t: 0 \leq t \leq T)$. In dimension $d=1$, the theory of diffusion local times can then be used to relate this random path-dependent metric to the $L^2$-distance \cite{vdMeulen2006, PSvZ13, vWvZ16}, but when $d>1$ such local time arguments are not available. In the multi-dimensional setting, Nickl and Ray \cite{nicklray2020} relate this random Hellinger metric to the $L^2$-distance for specific truncated Gaussian series product priors on $b$. They exploit concentration properties of the high-dimensional random matrices induced by the Hellinger semimetric on finite-dimensional projection spaces to relate this problem to a random design type regression problem. However, this approach crucially uses that the priors for each coordinate of $b = (b_1,\dots,b_d)$ are supported on the same finite-dimensional projection spaces, which is typically only the case if $b_1,\dots,b_d$ have \textit{independent} priors. Since product priors for $b$ draw gradient vector fields $b= \nabla B$ with probability zero, they are inherently unable to model reversibility.

	Aside from requiring different priors, modelling the potential $B$ introduces fundamentally new features to the inference problem at hand. Whereas one can relate the non-gradient vector field case to a direct linear regression type model \cite{nicklray2020}, modelling $B$ is equivalent to modelling the invariant measure (see \eqref{Eq:InvMeas}) and leads to a genuinely nonlinear regression problem. We must thus employ a completely different approach to \cite{nicklray2020} here, using instead tools from the statistical theory of nonlinear inverse problems  \cite{MNP20,AN19,GN20,NS20,MNP20b} - our work can thus also be viewed as a contribution to the Bayesian inverse problems literature.

	Instead of Hellinger testing theory, we develop concentration inequalities for preliminary estimators to directly construct suitable plug-in tests following ideas in \cite{GN2011}, see also \cite{R13,NS17,A18,AN19,MRS20}. In the present setting, the invariant measure $\mu_b$ of the diffusion describes the probabilities 
\begin{equation} \label{erginv}
\mu_b(A) =^{a.s.} \lim_{T \to \infty} \frac{1}{T}\int_0^T 1_A(X_t)dt,
\end{equation}
corresponding to the average asymptotic time spent by the process $(X_t)$ in a given measurable subset $A$ of the state space. In the reversible case, we can exploit the one-to-one correspondence $\mu_b \propto e^{2B}$ between the potential $B$ and the invariant measure $\mu_b$, to construct estimators for $B$ based on estimators for $\mu_b$. We then combine elliptic PDE and martingale techniques with concentration of measure arguments to obtain exponential inequalities for such estimators, and hence bounds for the type-II errors of suitable tests.

	We study posterior asymptotics in the ergodic setting, where one can gain information about the average behaviour of the diffusion dynamics from a sufficiently long particle trajectory. This requires strong enough recurrence that the diffusion is largely confined to a bounded region, which matches the physical intuition in many applied situations. To ensure such recurrence and mixing of the diffusion, we follow \cite{PPRS12,PSvZ13,vWvZ16,A18,GSMH19,vW2019,nicklray2020} in restricting to the \textit{periodic} setting, and thus periodic potentials $B$. Under this simplification, a potential $B$ still implies the corresponding (periodised) diffusion is reversible (\cite{GSMH19}, Proposition 2) and so our results maintain the key modelling link between reversibility and potential functions. Periodicity simplifies certain technical arguments, in particular the elliptic PDE techniques involved in studying the mapping properties of the generator of the underlying semigroup (see e.g. Chapter II.3 in \cite{BJS64}), thus allowing a cleaner exposition of the main statistical ideas. The underlying PDE techniques extend conceptually to the non-periodic setting under certain conditions, albeit at the expense of significant additional technicalities that are beyond the scope of the present paper, see Section \ref{sec:nonperiod} for discussion.

	As well as deriving theoretical results, we also discuss numerical implementation in the continuous observation model considered here. Posterior sampling using simulation techniques is well-studied for `real-world' discrete data, including for some priors we consider here, which we highlight when discussing concrete prior choices. For recent work on this active research topic, see for example \cite{BPRF06,PPRS12, RBO13,vdMS17, vdMS17b,BRO18,GSMH19} and references therein. 

\section{Main results}
\label{Sec:results}

\subsection{Basic notation and definitions}
\label{Sec:Basics}

Let $\T^d$ be the $d$-dimensional torus, isomorphic to $[0,1]^d$ with the opposite points on the cube identified.  We denote by $L^p(\T^d)$ the usual Lebesgue spaces on $\T^d$ equipped with norm $\|\cdot\|_p$, and by $\langle\cdot,\cdot\rangle_2$ the inner product on $L^2(\T^d)$. We further define the subspaces
$$\dot{L}^2(\T^d) := \left\{ f\in L^2(\T^d): \int_{\T^d}f dx = 0 \right\}, \qquad \dot{L}^2_\mu(\T^d) := \left\{f\in L^2(\T^d) : \int_{\T^d}fd\mu = 0 \right\},$$
where $\mu$ is a probability measure on $\T^d$.

	Let $C(\T^d)$ be the space of continuous functions on $\T^d$, equipped with the supremum norm $\|\cdot\|_\infty$. For $s>0$, denote by $C^s(\T^d)$ the usual H\"older space of $\lfloor s \rfloor$-times continuously differentiable functions on $\T^d$ whose $\lfloor s \rfloor^{\text{th}}$-derivative is $(s-\lfloor s\rfloor)$-H\"older continuous. We let $H^s(\T^d)$, $s\in \R$, denote the usual $L^2$-Sobolev spaces on $\T^d$, defined by duality when $s<0$. We further define the Sobolev norms $\|f\|_{W^{1,q}} = \|f\|_q + \sum_{i=1}^d \|\partial_{x_i} f\|_q$, and note that $\|\cdot\|_{W^{1,2}}$ is equivalent to $\|\cdot\|_{H^1}$.

	Let $\{\Phi_{l r}: ~ l \in \{-1,0\} \cup \N, ~r=0,\dots,\max(2^{l d}-1,0)\}$
be an orthonormal tensor product wavelet basis of $L^2(\T^d)$, obtained from a periodised Daubechies wavelet basis of $L^2(\T)$, which we take to be $S$-regular for $S\in \N$ large enough; see Section 4.3. in \cite{ginenickl2016} for details. For $J\in \N$, define the finite-dimensional approximation space
\begin{equation}\label{V_J}
	V_J:=\textnormal{span}\{\Phi_{l r}:~ l \le J,~r=0,\dots,\max(2^{l  d-1}-1,0)\}
\end{equation}
and let $P_J:L^2(\T^d)\to V_J$ be the associated $L^2$-projection operator. Note that $V_J$ has dimension $v_J:=\textnormal{dim}(V_J)=O(2^{Jd})$ as $J\to\infty$. For $0 \leq t \leq S$, $1\le p,q\le\infty$, define the Besov spaces via their wavelet characterisation:
$$
	B^t_{p q}(\T^d)=\left\{ f\in L^p(\T^d): \ \|f\|_{B^t_{p q}}^q
	:=\sum_{l}2^{q l \left(t+\frac{d}{2}-\frac{d}{p}\right)}
	\left(\sum_r|\langle f,\Phi_{l r}\rangle_2 |
	^p\right)^\frac{q}{p}<\infty \right\},
$$
replacing the $\ell_p$ or $\ell_q$-norm above with $\ell_\infty$ if $p=\infty$ or $q=\infty$, respectively. Recall that $H^t(\T^d)=B^t_{22}(\T^d)$ and the continuous embedding  $C^s(\T^d)\subseteq B^s_{\infty\infty}(\T^d)$ for $s\ge0$, see Chapter 3 in \cite{ST87}.

	When no confusion may arise, we suppress the dependence of the function spaces on the underlying domain, writing for example $B^t_{pq}$ instead of $B^t_{pq}(\T^d)$. We also employ the same function space notation for vector fields $f=(f_1,\dots,f_d)$. For instance, $f \in H^s = (H^s)^{\otimes d}$ will mean each $f_i \in H^s$ and the norm on $H^s$ is $\|f\|_{H^s} = \sum_{i=1}^d \|f_i\|_{H^s}$. Similarly, $\|\nabla g\|_p = \sum_{i=1}^d \|\partial_{x_i} g\|_p$.

	We write $\| . \|$ for the Euclidean norm in $\R^d$, $\lesssim$, $\gtrsim$ and $\simeq$ to denote one- or two-sided inequalities up to multiplicative constants that may either be universal or `fixed' in the context where the symbols appear. We also write $a_+ = \max(a,0)$ and $a \vee b=\max(a,b)$ for real numbers $a,b$. The $\eps$-covering number of a set $\Theta$ for a semimetric $d$, denoted $N(\Theta,d,\eps)$, is the minimal number of $d$-balls of radius $\eps$ needed to cover $\Theta$.

\subsection{Diffusions with periodic drift and Bayesian inference}\label{sec:problem_setup}

Consider the SDE \eqref{model} with drift $b=\nabla B$, for a twice-continuously differentiable and one-periodic potential $B:\R^d\to\R$, that is $B(x+m) = B(x)$ for all $m\in \Z^d$,
\begin{equation}\label{model_B}
	dX_t = \nabla B (X_t) dt + dW_t, 
	\qquad X_0 = x_0 \in \R^d, 
	\quad t\geq 0.
\end{equation}
There exists a $d$-dimensional strong pathwise solution $X=(X_t=(X_t^1,\dots,X_t^d): \ t\ge0)$ with cylindrically defined law on the path space $C([0,\infty);\R^d)$; see, e.g., Chapters 24 and 39 in \cite{B11}. For $T>0$, let $X^T:=(X_t: \ 0\le t\le T)$ and denote by $P_B$ the law of $X^T$ on $C([0,T],\R^d)$. We omit the dependence on the initial condition $X_0 = x_0$ since it plays no role in our results.

	By periodicity, we can consider $B$ as a function on $\T^d$. In model \eqref{model_B}, the law $P_B$ depends on $B$ only through $b= \nabla B$ (see \eqref{Eq:LogLik} below), which is thus only identifiable up to an additive constant. We therefore choose to work with the unique equivalence class given by $\int_{\T^d}B(x) dx  = 0$, i.e.~$B \in \dot{L}^2(\T^d)$. Our goal is to estimate the drift $b = \nabla B:\R^d \to \R^d$ from an observed trajectory $X^T\sim P_B$. We will sometimes write $P_b$ when a technical result also applies to possibly non-gradient vector field drifts, but this will be clarified in each instance.

	The periodic model effectively restricts the diffusion to the bounded state space $\T^d$. More precisely, while the diffusion defined in \eqref{model_B} takes values on all of $\R^d$, its values $(X_t)$ modulo $\Z^d$ contain all the relevant statistical information about $\nabla B$ (note that $(X_t)$ will not be globally recurrent on $\R^d$). This allows us to define an \textit{invariant measure} on $\T^d$, since it holds that (arguing as in the proof of Lemma 6 in \cite{nicklray2020})
$$
	\frac{1}{T}\int_0^T \varphi(X_s) ds \to^{P_B} \int_{\T^d} \varphi d\mu_b 
	\qquad \text{as }T\to\infty, \quad \forall \varphi \in C(\T^d),
$$
where $\mu_b$ is a uniquely defined probability measure on $\T^d$ and we identify $\varphi$ with its periodic extension to $\R^d$ on the left-hand side. The measure $\mu_b$ thus inherits the usual probabilistic interpretation as the limiting ergodic average in \eqref{erginv}.

	Recall that the generator $L:H^2(\T^d) \to L^2(\T^d)$ of the possibly non-reversible diffusion from \eqref{model} is
\begin{equation}\label{Eq:generator}
	L_b = \frac{1}{2}\Delta + b.\nabla 
	= \frac{1}{2}\sum_{i=1}^d \frac{\partial^2}{\partial x_i^2} 
	+ \sum_{i=1}^d b_i(\cdot) \frac{\partial}{\partial x_i}.
\end{equation}
If $b=\nabla B$ for some potential $B\in C^2(\T^d)$, the corresponding invariant measure of the diffusion has probability density function 
\begin{equation}
\label{Eq:InvMeas}
	\mu_B(x) = \frac{e^{2B(x)}}{\int_{\T^d}e^{2B(y)}dy}, \qquad x\in \T^d,
\end{equation}
see p. 45-47 in \cite{bakry2014}. In particular, if we have a potential $B$, then we can recover $b$ from $\mu_B$ via $b = \nabla B = \tfrac{1}{2} \nabla \log \mu_B$, a connection we exploit in our proofs. In a slight abuse of notation, we write either $\mu_b$ or $\mu_B$ depending on the context when $b = \nabla B$, and we use $\mu_b$ for both the probability measure and its density function.

	The log-likelihood for $B\in C^2(\T^d)$ for our observation model is given by Girsanov's theorem (e.g., Section 17.7 in \cite{B11}):
\begin{equation}
\label{Eq:LogLik}
	\ell _T(B)
	:=
		\log\frac{ dP_B}{dP_0}(X^T)
	=
		-\frac{1}{2}\int_0^T\|\nabla B (X_t)\|^2dt + \int_0^T\nabla B(X_t).dX_t,
\end{equation}
where $P_0$ is the law of a $d$-dimensional Brownian motion $(W_t:0\le t\le T)$. We consider a Bayesian approach to the problem, assigning a (possibly $T$-dependent) prior $\Pi = \Pi_T$ to $B$, which for identifiability we assume is supported on $\dot{L}^2(\T^d) \cap C^2(\T^d)$. The posterior $\Pi(\cdot|X^T)$ then takes the form
\begin{equation}
\label{Eq:Posterior}
	d\Pi(B|X^T) = \frac{e^{\ell_T(B)}d\Pi(B)}{\int e^{\ell_T(B')}d\Pi(B')}.
\end{equation}
Note that this induces a prior and posterior for both $b = \nabla B$ and $\mu_B$. 
In the following, we study the concentration of the posterior about the ‘ground truth' gradient vector field $b_0=\nabla B_0$, assuming that the observation $X^T\sim P_{B_0}$ is generated according to the SDE \eqref{model_B} with $B=B_0$.

\subsection{Gaussian process priors}
\label{Sec:GP}

\subsubsection{Contraction rates}
\label{Sec:GP}

Gaussian priors are widely employed for diffusion models \cite{PPRS12,PSvZ13,RBO13,GS14,vWvZ16,vdMS17,vW2019,BRO18,nicklray2020} and we provide here theoretical guarantees for such priors. We consider a class of Gaussian process priors constructed from a base Gaussian probability measure $\Pi_W$, which we assume satisfies the following condition. We refer, e.g., to Chapter 2 in \cite{ginenickl2016} for definitions and terminology regarding the theory of Gaussian processes and measures.

\begin{condition}\label{GP_condition}
For $s>(d-1)\vee(1/2)$, let $\Pi_W=\Pi_{W,T}$ be a centred Gaussian Borel probability measure on the Banach space $C(\T^d)$ that is supported on a separable (measurable) linear subspace of $C^{(d/2+\kappa)\vee 2}(\T^d) \cap \dot{L}^2(\T^d)$ for some $\kappa>0$, and assume its reproducing kernel Hilbert space (RKHS) $(\H,\|\cdot\|_\H)$ is continuously embedded into the Sobolev space $H^{s+1}(\T^d)$.
\end{condition}

	The constant $\kappa>0$ above can be taken arbitrarily small. Examples of Gaussian processes priors satisfying Condition \ref{GP_condition} include the periodic Mat\'ern process and high-dimensional Gaussian series expansions, see Examples \ref{Ex:Matern}-\ref{Ex:GaussWav} below.

	To control the nonlinearity of the problem, we rescale the base Gaussian processes following ideas from the Bayesian inverse problem literature \cite{MNP20,AN19,GN20,NS20,MNP20b}.
Given a random draw $W\sim\Pi_W$ satisfying Condition \ref{GP_condition}, consider the following rescaled function:
\begin{equation}
\label{prior}
	B(x) := \frac{W(x)}{T^{d/(4s+2d)}}, \qquad x\in \T^d,
\end{equation}
whose law $\Pi=\Pi_T$ we take as the prior for the potential $B$. It follows that $\Pi$ is a centred Gaussian probability measure on $C(\T^d)$, with the same support and RKHS as $\Pi_W$, but with rescaled RKHS norm $\| h \|_{\H_B} = T^{d/(4s+2d)}\|h\|_\H$.

	The rescaling enforces additional regularisation in the induced posterior distribution for the invariant measure $\mu_B$, implying in particular a bound for $\|\mu_B\|_{C^{(d/2+\kappa)\vee 2}}$, needed to control the nonlinear map $B \mapsto \mu_B$ given in \eqref{Eq:InvMeas}. Such issues are commonly encountered in nonlinear inverse problems, where one often requires the posterior to place most of its mass on sets of bounded higher-order smoothness in order to use stability estimates. Note that such priors are special cases of the rescaled Gaussian process priors considered in several benchmark statistical settings in \cite{vVvZ07}.

\begin{theorem}\label{Prop:GaussRates}
Let $\Pi =\Pi_T$ be the rescaled Gaussian process prior for $B$ in \eqref{prior} with $W\sim\Pi_W$ satisfying Condition \ref{GP_condition} for some $s>(d-1)\vee(1/2)$, some $\kappa>0$, and RKHS $\H$. Suppose that $B_0\in H^{s+1}(\T^d)$ and that there exists a sequence $B_{0,T} \in \H$ such that $\|B_0 - B_{0,T}\|_{C^1} =O(T^{-s/(2s+d)})$ and $\|B_{0,T}\|_\H = O(1)$ as $T\to\infty$. Then for $M>0$ large enough, as $T\to\infty$,
$$
	P_{B_0} \Pi(B:\|\nabla B-\nabla B_0\|_2 \geq MT^{-s/(2s+d)} |X^T) \to 0.
$$
\end{theorem}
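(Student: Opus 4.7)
The plan is to apply the general posterior contraction theorem for reversible periodic diffusions stated earlier in the paper, with rate $\eps_T = T^{-s/(2s+d)}$, and verify its hypotheses for the rescaled Gaussian prior \eqref{prior}. Broadly these reduce to three ingredients: (i) a Kullback--Leibler prior mass bound $\Pi(B : \|B - B_0\|_{C^1} \le \eps_T) \ge e^{-c_1 T \eps_T^2}$; (ii) sieves $\mathcal{B}_T \subset C^{(d/2+\kappa) \vee 2}(\T^d)$ with $\Pi(\mathcal{B}_T^c) \le e^{-c_2 T \eps_T^2}$ of uniformly bounded $H^{s+1}$-norm; and (iii) a metric entropy estimate for $\{\nabla B : B \in \mathcal{B}_T\}$ in $L^2$ of order $T\eps_T^2$. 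The rescaling exponent $d/(4s+2d)$ in \eqref{prior} is calibrated precisely so that the squared RKHS norm and the centred small-ball exponent both sit at the minimax rate $T\eps_T^2$.

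I would verify (i) via the Gaussian concentration function of the rescaled prior at $B_0$. Using the hypothesised approximating element $h = B_{0,T}$ and the rescaled RKHS norm $\|\cdot\|_{\H_B} = T^{d/(4s+2d)} \|\cdot\|_\H$, the approximation piece is bounded by $\tfrac{1}{2} T^{d/(2s+d)} \|B_{0,T}\|_\H^2 = O(T\eps_T^2)$, and the hypothesis $\|B_0 - B_{0,T}\|_{C^1} = O(\eps_T)$ is exactly what is needed to recentre the subsequent small-ball bound at $B_0$. For the centred small-ball factor, the continuous embedding $\H \hookrightarrow H^{s+1}(\T^d)$ combined with Condition \ref{GP_condition} and classical small-ball estimates for Gaussian measures (see Chapter 11 of \cite{ginenickl2016}) yield, after the rescaling by $T^{d/(4s+2d)}$, the bound $-\log \Pi(\|B\|_{C^1} \le \eps_T) \lesssim T \eps_T^2$. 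A standard Cameron--Martin shift then converts this into the decentred bound (i).

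For (ii) and (iii) I would use the Borell--Sudakov--Tsirelson sieve $\mathcal{B}_T = M_T \H_1^B + \eps_T \mathcal{V}_1$, where $\H_1^B$ is the unit ball of the rescaled RKHS and $\mathcal{V}_1$ is the unit ball of $C^{(d/2+\kappa) \vee 2}(\T^d)$. Choosing $M_T \asymp \sqrt{T}\eps_T$ gives $\Pi(\mathcal{B}_T^c) \le e^{-c T\eps_T^2}$ by Borell's inequality. A direct computation using $\sqrt{T}\eps_T = T^{d/(4s+2d)}$ shows $M_T \H_1^B$ is contained in a bounded ball of $H^{s+1}(\T^d)$ uniformly in $T$, so that $\{\nabla B : B \in \mathcal{B}_T\}$ lies (up to $\eps_T$-perturbation) in a bounded $H^s$-ball. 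Its $L^2$ entropy is then $\lesssim \eps_T^{-d/s} \asymp T^{d/(2s+d)} = T\eps_T^2$, which is exactly (iii).

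The main obstacle, in my view, is not the Gaussian computation above but the compatibility between the sieve and the nonlinear map $B \mapsto \mu_B$ from \eqref{Eq:InvMeas}: the general contraction theorem uses plug-in tests derived from preliminary estimators of $\mu_B$, which rest on stability estimates requiring $\mu_B$ to be uniformly $C^{(d/2+\kappa)\vee 2}$-regular and bounded below on the sieve. This explains why Condition \ref{GP_condition} imposes $\kappa > 0$, and why the Borell sieve enlargement is taken in $\mathcal{V}_1$ rather than in $C^0$ or $L^2$. Once this nonlinear machinery is encapsulated by the general theorem, the Gaussian-specific verification sketched above delivers the stated $L^2$-contraction for $\nabla B$ about $\nabla B_0$ at the minimax rate $T^{-s/(2s+d)}$.
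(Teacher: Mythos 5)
Your item (i) — the prior mass bound via the Gaussian concentration function, using the rescaled RKHS norm $\|\cdot\|_{\Hcal_B}=T^{d/(4s+2d)}\|\cdot\|_\Hcal$, the approximating element $B_{0,T}$, and a centred $C^1$ small-ball estimate — matches the paper's argument and is correct.

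However, your items (ii) and (iii) target the wrong hypotheses for the general contraction theorem \ref{thm:contraction_b}, and there is a genuine gap. That theorem has no metric entropy condition: its key requirement on the sieve $\Lambda_T$ is the \emph{wavelet projection bias bound} $\|\mu - P_J\mu\|_{W^{1,q}}\le C_\Lambda\xi_T$ (together with $\|\mu_0-P_J\mu_0\|_{W^{1,q}}\le D_0\xi_T$ for the truth), plus uniform lower bounds on $\mu$ and upper bounds on $\|\mu\|_{C^{(d/2+\kappa)\vee 2}}$. The entropy of $\{\nabla B : B\in \Bcal_T\}$ in $L^2$ plays no role, because the tests are not Hellinger tests built from covering numbers — indeed, as the paper explains, the Hellinger route only controls a random observation-dependent semimetric that cannot be related to $L^2$ when $d>1$. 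Instead, the tests are plug-in tests based on the concentration of the wavelet estimator $\hat\mu_T$ about its expectation $P_J\mu$ (Proposition \ref{prop:sup_prob} and Lemma \ref{lem:W11_exp}), which is precisely why the bias bound is the relevant condition. You acknowledge the plug-in test structure in your last paragraph, but your hypothesis (iii) is inconsistent with it.

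This leads to a concrete failure in your sieve. You propose the Borell-type set $\Bcal_T = M_T\Hcal_1^B + \eps_T\Vcal_1$ with $\Vcal_1$ the unit ball of $C^{(d/2+\kappa)\vee 2}$. This does give $\Pi(\Bcal_T^c)\le e^{-cT\eps_T^2}$ and a uniform $C^{(d/2+\kappa)\vee 2}$ bound, but the enlargement direction $\eps_T\Vcal_1$ does \emph{not} yield the required wavelet bias bound on $\mu_B$. An element of $\eps_T\Vcal_1$ has only $C^{(d/2+\kappa)\vee 2}$-regularity, and the resulting bias $\|\mu_B - P_J\mu_B\|_{W^{1,2}}$ one would get this way is of order $2^{-J[(d/2+\kappa)\vee 2 - 1]}$, which is larger than $\eps_T\simeq 2^{-Js}$ whenever $s>(d/2+\kappa)\vee 2 -1$ (which holds here). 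The paper's sieve $\mB_T$ in \eqref{B_T set} is built differently for exactly this reason: the low-regularity part $B_1$ is required to be small simultaneously in $L^\infty$ (at the finer rate $\bar\eps_T = T^{-(s+1)/(2s+d)}$) and in $C^1$ (at rate $\eps_T$), while the remainder $B_2$ has bounded $B^{s+1}_{pp}$-norm and the full $B$ has bounded $C^{(d/2+\kappa)\vee 2}$-norm. This two-scale decomposition is what makes the nonlinear bias bound in Lemma \ref{lem:bias} go through: the derivative of $K_J$ costs a factor $2^J$, which is offset only because $\|B_1\|_\infty\lesssim\bar\eps_T$ is a full factor $2^{-J}$ smaller than $\eps_T$; the high-smoothness $B_2$-contribution is handled via Lemma \ref{lem:exp_map}. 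You also need Lemma \ref{lem:exp_map} and $B_0\in H^{s+1}$ to verify $\|\mu_0-P_J\mu_0\|_{W^{1,2}}\lesssim\eps_T$, a step you did not address.

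In short: the correct diagnosis you give at the end — that the whole difficulty lies in making the nonlinear map $B\mapsto\mu_B$ compatible with the sieve — is exactly right, but that difficulty is resolved through the two-scale sieve decomposition and the bias bound, not through entropy, and your proposed sieve would not satisfy the actual condition of Theorem \ref{thm:contraction_b}.
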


	The resulting posterior thus contracts about the truth at the usual nonparametric rate for $s$-regular functions in any dimension $d$. Since the law $P_B$ depends on the potential $B$ only through $\nabla B$ in \eqref{model_B}, it is natural to study recovery of the gradient vector field $\nabla B_0$. Recall that we make the identifiability assumption that the prior, and hence posterior, for $B$ is supported on $\dot{L}^2(\T^d)$ in Condition \ref{GP_condition}. For $B,B_0 \in \dot{L}^2(\T^d)$, the norm $\|\nabla B - \nabla B_0\|_2$ is in fact equivalent to the usual Sobolev norm $\| B - B_0\|_{H^1}$ by the Poincaré inequality (e.g. p. 290 in \cite{E2010}).

	Theorem \ref{Prop:GaussRates} requires that the true $B_0$ be approximable by elements $B_{0,T}$ of the RKHS of $W$ at a suitable rate, which reflects the notion of smoothness being modelled by the Gaussian process prior. One can think of this condition as `$B_0$ is $(s+1)$-smooth' in both the Sobolev and prior sense. For instance, if the Gaussian prior already models Sobolev smooth functions (e.g.~a Mat\'ern process prior - see Example \ref{Ex:Matern}), then this poses no additional conditions.

	The last theorem implies the same converge rate for the posterior mean estimator.

\begin{corollary}\label{Cor:PostMean1}
Let $\hat{B}_T =E^{\Pi_T}[B|X^T]$ be the posterior mean. Under the conditions of Theorem \ref{Prop:GaussRates}, as $T \to \infty$,
$$\|\nabla \hat{B}_T - \nabla B_0\|_2 = O_{P_{B_0}}(T^{-s/(2s+d)} ).$$ 
\end{corollary}

Under natural regularity conditions, for example finite-dimensional priors, the maximum a-posteriori (MAP) estimator is well-defined as the minimizer of an objective function \cite{DLSV13,PSvZ13}:
$$\hat{B}_T = \hat{B}_T(X^T) = \argmin_{B \in \H} \left( -\ell_T(B) + \tfrac{1}{2} T^{d/(2s+d)} \|B\|_{\H}^2 \right).$$
Since the posterior is also Gaussian by Lemma \ref{Lem:Conjugacy} below, the MAP estimator, when it exists, equals the posterior mean $E^{\Pi_T}[B|X^T]$. In this case, Corollary \ref{Cor:PostMean1} can be viewed as a convergence rate for a penalized maximum likelihood estimator with penalty equal to the squared prior RKHS-norm.

%
%

%

\begin{remark}[Minimax rates]\label{rem:minimax}
Although lower bounds have not been formally proven in the exact periodic diffusion setting studied here, results for closely related non-periodic diffusion models suggest that the minimax rates are of the usual order $T^{-s/(2s+d)}$ in all the cases we consider. For reversible diffusions on $\R^d$, lower bounds for pointwise and $L^2$-loss follow from the local asymptotic equivalence of the diffusion model with a sequence of Gaussian shift experiments  proved in \cite{DR07}, see also \cite{S15,S16}. Given this asymptotic equivalence, it is instructive to consider lower bounds in related statistical models. In particular, for $L^p$-loss, $1\le p\le2$, with $s$-smooth Sobolev truths as in Theorem \ref{Prop:PExpRates} below, the minimax rate of estimation is $n^{-s/(2s+d)}$ (where $n$ plays the role of $T$) in Gaussian white noise \cite{LMS97}, density estimation \cite{DJ96} and nonparametric regression \cite{DJ96}. Since we obtain posterior contraction rate $T^{-s/(2s+d)}$ for all our results, this suggests all the priors we consider here are minimax optimal for estimation. We do not pursue the extension of such lower bounds to the current periodic setting, since periodicity is mainly a technical assumption to simplify the underlying PDE arguments, see Section \ref{sec:nonperiod}.
\end{remark}

\subsubsection{Examples of Gaussian priors}
\label{Sec:GPEx}

We now provide concrete examples of Gaussian priors to which Theorem \ref{Prop:GaussRates} applies. As discussed in Section \ref{sec:problem_setup}, the potential $B$ is only identified up to an additive constant, which we without loss of generality select via $\int_{\T^d} B(x) dx = 0$. For series expansions, one can enforce this by setting the coefficient of $e_0 \equiv 1$ (Fourier basis) or $\Phi_{-10} \equiv 1$ (wavelet basis) equal to zero. For more general Gaussian processes, one can simply recenter the prior draws by $B \mapsto B - \int_{\T^d} B(x) dx$.

		A common choice for this problem is a mean-zero Gaussian process with covariance operator equal to an inverse power of the Laplacian \cite{PSvZ13,vWvZ16,vW2019}, for which posterior inference based on discrete data can be computed efficiently using a finite element method \cite{PPRS12} (note that in the continuous model considered here, Gaussian priors for $B$ are conjugate, see Lemma \ref{Lem:Conjugacy} below). Such priors can be defined via a Karhunen-Loève expansion in the Fourier basis and are equivalent to periodic Mat\'ern processes, see Section \ref{Sec:PerMatProc} in the Supplement \cite{supp}.

\begin{example}[Periodic Matérn process]\label{Ex:Matern}
	For $s+1>d/2+(d/2)\vee 2$, consider the base Gaussian prior
\begin{equation}
\label{Eq:MaternKLSeries}
	W(x)= (2\pi)^{d/2} \sum_{k\in\Z^d} 
	\frac{1}{(1+4\pi^2 \|k\|^2)^{(s+1)/2}} g_k e_k(x), 
	\qquad g_k\overset{\textnormal{iid}}{\sim} N(0,1),
	\quad x\in\T^d,
\end{equation}	
corresponding to the series expansion of a periodic Matérn process with smoothness parameter $s+1-d/2$ (cf.~Section \ref{Sec:PerMatProc} in the Supplement for details). By the Fourier series characterisation of Sobolev spaces, its RKHS $\H$ equals $H^{s+1}(\T^d)$ with equivalent RKHS norm $\|\cdot\|_\H \simeq \|\cdot\|_{H^{s+1}}$. Furthermore, $W$ defines a Borel random element in $C^{s+1-\frac{d}{2}-\eta}( \T^d)$ for all $\eta>0$, which is a separable linear subspace of $C^{(d/2+\kappa)\vee 2}( \T^d)$ for sufficiently small $\kappa,\eta>0$ if $s+1>d/2+(d/2)\vee 2$. Condition \ref{GP_condition} therefore holds for periodic Mat\'ern processes. We may thus apply Theorem \ref{Prop:GaussRates} to the periodic Mat\'ern base prior in \eqref{Eq:MaternKLSeries} and any $B_0\in H^{s+1}(\T^d)= \H$ with $s+1> d/2 + (d/2)\vee 2$ by taking the trivial sequence $B_{0,T}=B_0\in H^{s+1}(\T^d)$.
\end{example}

	Another common approach to prior modelling is to obtain a high-dimensional discretisation by a truncated Gaussian series expansion. We illustrate this considering a wavelet expansion for concreteness, but analogous results can be derived for any basis compatible with the Sobolev smoothness scales, such as the Fourier basis.

\begin{example}[Truncated Gaussian series]\label{Ex:GaussWav}
Let $\{\Phi_{l r}, l \ge-1,r=0,\dots,\max(2^{l d}-1,0)\}$ be a periodized Daubechies wavelet basis of $L^2(\T^d)$ as described in Section \ref{Sec:Basics}, and consider the base prior
$$
	W(x)=\sum_{l \le J}\sum_r 2^{- l (s+1)}g_{l r}\Phi_{l r}(x), \qquad g_{l r}
	\overset{\textnormal{iid}}{\sim} N(0,1), \quad x\in\T^d,
$$
for some $s>(d-1)\vee(1/2)$ and where $J = J_T\in\N$ satisfies $2^J\simeq T^{1/(2s+d)}$, which is usually the optimal dimension of a finite-dimensional model for $s$-smooth functions. However, this is the correct choice of truncation for estimating an $(s+1)$-smooth invariant measure, which is equivalent to estimating the potential $B$ by \eqref{Eq:InvMeas}, due to the slightly different bias-variance decomposition for diffusions, see for instance Corollary 1 in \cite{DR07}.

	The support of $\Pi_W$ equals the finite-dimensional approximation space $V_J$, which is a separable linear subspace of $C^{(d/2+\kappa)\vee2}(\T^d)$. Its RKHS $\H$ equals $V_J$ with norm
$$
	\| h\|_\H^2=\sum_{l \le J}\sum_r2^{2 l (s+1)} |\langle h,\Phi_{l r}\rangle_2|^2 
	= \|h\|_{H^{s+1}}^2,
	\qquad h\in V_J,
$$
so that  $\Pi_W$ satisfies Condition \ref{GP_condition}. For $B_0\in H^{s+1}(\T^d)\cap C^{s+1}(\T^d)$, the wavelet projections $B_{0,T} = P_JB_0\in V_J=\H$ satisfy $\|P_JB_0\|_\H\le \|B_0\|_{H^{s+1}}<\infty$ and $\|B_0-P_JB_0\|_{C^1}\lesssim 2^{-Js}\simeq T^{-s/(2s+d)}$. Theorem \ref{Prop:GaussRates} therefore applies with $\Pi_W$ a Gaussian wavelet series and all $B_0\in H^{s+1}(\T^d)\cap C^{s+1}(\T^d)$ with $s>(d-1)\vee(1/2)$.
\end{example}

	In Section \ref{sec:p_exp}, we extend the last result to truncated $p$-exponential series priors. For $p=2$, Theorem \ref{Prop:PExpRates} below shows that the above additional smoothness requirement $B_0\in C^{s+1}(\T^d)$ can be removed under the slightly stronger minimal smoothness assumption $s>d/2+(d/2)\vee 2$. For discussion on extending these results to adaptive priors, see Section \ref{sec:adaptation} below.

\subsubsection{Conjugacy of Gaussian priors}
\label{Sec:ConjGaussPriors}

For the continuous observation model $X^T = (X_t: 0\leq t \leq T)$, the likelihood \eqref{Eq:LogLik} is of quadratic form in the potential $B$, and hence Gaussian priors are conjugate as we now show. This parallels the known conjugacy property of Gaussian priors for the drift vector field $b$ \cite{PSvZ13}, which in our setting corresponds to Gaussianity of the posterior on $\nabla B$.

\begin{lemma}\label{Lem:Conjugacy}
Let $\Pi = \Pi_T$ be a centred Gaussian Borel probability measure on $L^2(\T^d)$ that is supported on $C^2(\T^d)\cap \dot L^2(\T^d)$. Then the posterior distribution \eqref{Eq:Posterior} is almost surely (under the law of the data $X^T$) Gaussian on  $L^2(\T^d)$.
\end{lemma}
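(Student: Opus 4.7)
The plan is to exploit that, via Itô's formula, the Girsanov log-likelihood $\ell_T(B)$ is a quadratic-linear functional of $B$, so that multiplying the Gaussian prior by $e^{\ell_T(B)}$ preserves Gaussianity, in analogy with the finite-dimensional conjugacy of a Gaussian prior under a Gaussian likelihood. First, since $\Pi$ is supported on $C^2(\T^d)\cap\dot L^2(\T^d)$, Itô's formula applied to $t\mapsto B(X_t)$ gives, $\Pi$-a.s.,
\begin{equation*}
\int_0^T\nabla B(X_t)\cdot dX_t = B(X_T)-B(X_0)-\tfrac12\int_0^T\Delta B(X_t)\,dt,
\end{equation*}
so that $\ell_T(B)=\L_T(B)-\tfrac12\Qcal_T(B,B)$, where $\L_T(B):=B(X_T)-B(X_0)-\tfrac12\int_0^T\Delta B(X_t)\,dt$ is a continuous linear functional on $C^2(\T^d)$ and $\Qcal_T(B,B'):=\int_0^T\nabla B(X_t)\cdot\nabla B'(X_t)\,dt$ is a continuous, positive semidefinite, symmetric bilinear form on $C^1(\T^d)$.

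Next, I would reduce to finite dimensions using the Karhunen--Loève expansion of $\Pi$. Since $\Pi$ is centred Gaussian with support in a separable linear subspace of $C^2(\T^d)\cap\dot L^2(\T^d)$, there exist an $L^2$-orthonormal system $\{e_k\}_{k\ge 1}\subseteq C^2(\T^d)\cap\dot L^2(\T^d)$ and variances $\sigma_k^2\ge 0$ with $\sum_k\sigma_k^2<\infty$ such that $B=\sum_k\sigma_k\xi_k e_k$ with $\xi_k\iid N(0,1)$ under $\Pi$, the series converging in $L^2(\T^d)$ $\Pi$-a.s. For each $K\in\N$, set $V_K:=\textnormal{span}(e_1,\ldots,e_K)$ and let $\Pi_K=N(0,\textnormal{diag}(\sigma_1^2,\ldots,\sigma_K^2))$ be the induced prior on $\R^K\cong V_K$ for the coefficient vector $c=(c_k)_{k\le K}$. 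The restricted likelihood on $V_K$ has the form $\exp(v_K^\top c-\tfrac12 c^\top M_K c)$, with $v_K=(\L_T(e_k))_{k\le K}\in\R^K$ and $M_K=(\Qcal_T(e_j,e_k))_{j,k\le K}\succeq 0$. Elementary finite-dimensional Gaussian conjugacy then yields that the posterior for $c$ under $\Pi_K$ is Gaussian on $\R^K$ with precision matrix $\textnormal{diag}(1/\sigma_k^2)+M_K$ and mean $(\textnormal{diag}(1/\sigma_k^2)+M_K)^{-1}v_K$.

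I would then pass to the limit $K\to\infty$ to deduce Gaussianity of the full posterior. Since $B^{(K)}:=\sum_{k\le K}\sigma_k\xi_k e_k\to B$ in $L^2(\T^d)$ (and, along a subsequence, in $C^2(\T^d)$), one has $\ell_T(B^{(K)})\to\ell_T(B)$ $\Pi$-a.s. Dominated convergence---with the dominating bound $|\ell_T(B)|\le C_T(1+\|B\|_{C^2}+\|B\|_{C^2}^2)$ integrable under $\Pi$ by Fernique's theorem---then identifies the finite-dimensional marginals of the posterior on each $V_K$ with the Gaussian computed in the previous step (after integrating out $B-B^{(K)}$). Since a Borel probability measure on the separable Hilbert space $L^2(\T^d)$ is Gaussian iff all its cylindrical finite-dimensional marginals are Gaussian, the posterior $\Pi(\cdot|X^T)$ is Gaussian on $L^2(\T^d)$.

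The main obstacle is making the limit rigorous. One needs $\P_{B_0}$-a.s.~finiteness and strict positivity of the normalising constant $\int e^{\ell_T(B)}d\Pi(B)$ to ensure the posterior is well-defined: finiteness follows from the pointwise bound $\ell_T(B)\le\L_T(B)\lesssim_T 1+\|B\|_{C^2}$ combined with Fernique for $\|B\|_{C^2}$ under $\Pi$, while positivity uses continuity of $\ell_T$ at zero together with the fact that any centred Gaussian measure charges every neighbourhood of the origin in its support. One must also verify that the limiting mean and covariance obtained in the third step define bona fide elements of $L^2(\T^d)$ and of the trace-class operators on $L^2(\T^d)$; this is ensured by the continuity of $\L_T$ and $\Qcal_T$ on $C^2(\T^d)$ together with the trace-class structure of the prior covariance.
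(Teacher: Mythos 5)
Your overall plan---use It\^{o}'s formula to see that $\ell_T(B)$ is a quadratic-linear functional of $B$, reduce to finite dimensions, compute the conjugate Gaussian there, pass to the limit with a Fernique-type domination bound, and conclude by a characterization of Gaussian measures---is essentially the route the paper takes. The paper uses the Fourier basis rather than the Karhunen--Lo\`{e}ve basis, and closes by appealing to the fact that Gaussian measures are closed under weak convergence, rather than the \qmarks{all cylindrical marginals Gaussian} criterion; these are cosmetic differences.

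However, there is a gap in how you connect the finite-dimensional calculations to the full posterior. The conjugate Gaussian you exhibit on $V_K$ is the posterior obtained from the \emph{truncated} likelihood $\exp\bigl(v_K^\top c - \tfrac12 c^\top M_K c\bigr)$; it is \emph{not} the $V_K$-marginal of $\Pi(\cdot|X^T)$, because the full likelihood $e^{\ell_T(B)}$ contains cross terms $\Qcal_T(B^{(K)},B^{(>K)})$ coupling $B^{(K)}$ to $B^{(>K)}$, so integrating out $B-B^{(K)}$ does not decouple and does not return your $\tilde\Pi_K$. Invoking \qmarks{a measure is Gaussian iff all its finite-dimensional cylindrical marginals are Gaussian} therefore does not close the argument: you would first need to show the actual marginals are Gaussian, which is what you are trying to prove. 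The paper avoids this by defining the approximate posterior $d\Pi_K(B|X^T)\propto e^{\ell_T(P_KB)}d\Pi(B)$ as a probability measure on the \emph{full} space $L^2$, observing that because the truncated likelihood depends only on $P_K B$ it factorizes as (your conjugate Gaussian on $E_K$) $\otimes$ (push-forward of $\Pi$ onto $E_K^\perp$), hence is Gaussian on $L^2$; then it shows $\Pi_K(\cdot|X^T)\to\Pi(\cdot|X^T)$ weakly and uses closedness of Gaussian measures under weak limits.

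A second, more minor, issue is the dominating function. The bound $|\ell_T(B)|\lesssim_T 1+\|B\|_{C^2}+\|B\|_{C^2}^2$ is not $\Pi$-integrable after exponentiation unless the constant multiplying $\|B\|_{C^2}^2$ is small, since Fernique only gives $E[e^{\alpha\|B\|_{C^2}^2}]<\infty$ for $\alpha$ below a threshold; the paper uses Young's inequality to make this coefficient an arbitrary $\eta>0$. You do state the correct one-sided bound $\ell_T(B)\le\L_T(B)\lesssim_T 1+\|B\|_{C^2}$ in your final paragraph, which suffices for the upper tail, but you would still need this bound, applied to $B^{(K)}$, to be dominated uniformly in $K$ by a $\Pi$-integrable function, which requires control of $\sup_K \|B^{(K)}\|_{C^2}$ (e.g.~via a martingale/Doob-type argument for the partial KL sums), a point that should be made explicit.
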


The proof of Lemma \ref{Lem:Conjugacy} can be found in Section \ref{Sec:Conjugacy} of the Supplement. The $C^2(\T^d)$ condition is a standard assumption on $B$ to ensure the existence of a strong pathwise solution to the SDE \eqref{model_B}, see e.g.~\cite{B11}, and thus is natural in our setting. Conjugacy implies that the computation of the posterior mean in Corollary \ref{Cor:PostMean1}, as well as posterior sampling, is straightforward to implement in this model. Consider a discretisation step by a Karhunen-Loève (KL) truncation, taking as prior the random function
\begin{equation}
\label{Eq:TruncatedKL}
	B(x) = \sum_{k=1}^K \upsilon_k
	g_k h_k(x), \qquad \ g_k\overset{\textnormal{iid}}{\sim} N(0,1), \quad x\in\T^d,
\end{equation}
for some fixed $K\in\N$, scaling coefficients $\upsilon_k>0$, and some ‘basis' functions $(h_k, \ k\in\N)\subset C^2(\T^d)\cap \dot L^2(\T^d)$ (e.g., the Fourier or wavelet bases). Identifying a function $B = \sum_{k=1}^K B_k h_K$ with its coefficient vector $\mathbf B = (B_1,\dots, B_K)^T\in \R^K,$ a standard conjugate computation yields
\begin{equation}
\label{Eq:TruncatedPosterior}
	\mathbf B | X^T \sim N\left((\Sigma+\Upsilon^{-1})^{-1}
		\mathbf H, (\Sigma+\Upsilon^{-1})^{-1}\right),
\end{equation}
where $\Upsilon = \textnormal{diag}(\upsilon_1^2,\dots,\upsilon_K^2)$ is a $K \times K$ diagonal matrix and
$$
	\Sigma = \left[ \int_0^T \nabla h_k
	(X_t).\nabla h_{k'}(X_t) dt \right]_{k,k'}\in\R^{K\times K}, \
	\mathbf{H} = \left[\left(\int_0^T \nabla h_k(X_t). dX_t\right)_{k=1}^K\right]^T
	\in \R^K.
$$
Additional details can be found in Section \ref{Sec:Conjugacy} of the Supplement. For concrete basis choices for the KL-expansion \eqref{Eq:TruncatedKL}, $\Sigma$ and $\mathbf H$ can be computed from the data, allowing direct posterior sampling according to \eqref{Eq:TruncatedPosterior}, see for instance Algorithm 2.1 in \cite{RW06} for implementation details. Gaussian conjugacy no longer holds for the more realistic discrete data setting, where more advanced sampling techniques must be employed, see for instance \cite{PPRS12,RBO13,vdMS17b,BRO18}.

\subsection{$p$-exponential priors}
\label{sec:p_exp}

We next consider modelling the potential function $B$ using the class of heavier-tailed $p$-exponential priors \cite{ADH20},  known in the inverse problems literature as Besov priors \cite{LSS09}. These priors are constructed via random basis expansions, assigning i.i.d.~random coefficients distributed according to the probability density function
$$
	f_p(x)\propto e^{-\frac{|x|^p}{p}}, \qquad x\in\R, \quad p\in [1,2].
$$
This generalizes the series construction of Gaussian priors ($p=2$), allowing heavier-tailed random coefficients for $p< 2$, while preserving a log-concave structure favourable to computation, see Remark \ref{rem:pExpComp}. The class includes products of Laplace distributions ($p=1$), which have recently received significant interest in the Bayesian inverse problem community \cite{KLNS12,DHS12,R13,BH15,DS17,ABDH18,AW21} due to their edge-preserving and sparsity-promoting properties. For Laplace priors, these advantages stem from the $\ell^1$-type penalty induced by the prior, which promotes sparse reconstructions that have been observed to perform better in practice for the recovery of spatially-irregular, blocky structures such as images, see e.g.~\cite{REtAl06,LP15,KLNS12,BG15,JP16} and references therein. It is therefore of interest to provide theoretical guarantees for these methods which are employed in practice.

	We consider $p$-exponential priors defined via a truncated wavelet expansion. Let $\{\Phi_{l r}, l \ge-1,r=0,\dots,\max(2^{l d}-1,0)\}$ be a periodized Daubechies wavelet basis of $L^2(\T^d)$ as described in Section \ref{Sec:Basics}. For $p\in[1,2]$ and $s \geq 0$, consider the \textit{$p$-exponential measure} \cite{ADH20} $\Pi_W = \Pi_{W,T}$ arising as the law of the random function
\begin{equation}
\label{Eq:pExpBasePrior}
	W(x)=\sum_{l =0 }^J\sum_r
	2^{- l \left(s+1+\frac{d}{2}-\frac{d}{p}\right)} \rho_{l r} \Phi_{l r}(x),
	\qquad x\in\T^d,
\end{equation}
where $\rho_{lr} \overset{\textnormal{iid}}{\sim} f_p$ are $p$-exponential random variables with $f_p$ defined above, and where the truncation level $J = J_T\in\N$ satisfies $2^J\simeq T^{1/(2s+d)}$ as $T\to\infty$. Note that this class includes both the product Laplace prior ($p=1$) and Gaussian series prior ($p=2$). For identifiability, the wavelet coefficient corresponding to $\Phi_{-10}\equiv 1$ is again set to zero to enforce the zero-integral condition, so that $W\in \dot{L}^2(\T^d)$ almost surely. Similar to the Gaussian priors considered in the previous section, we introduce a suitable scaling of $W\sim\Pi_W $, taking as prior $\Pi=\Pi_T$ for $B$ the law of
\begin{equation}
\label{Eq:pExpPrior}
	B(x)=\frac{W(x)}{\big(T^\frac{d}{2s+d}\big)^\frac{1}{p}},
	\qquad x\in\T^d.
\end{equation}
This is the correct scaling, since scaling at a different rate yields suboptimal contraction rates even in the simpler Gaussian white noise model \cite[Proposition 5.8]{ADH20}. The next theorem shows that the resulting posterior contracts about the truth at the nonparametric rate $T^{-s/(2s+d)}$ in any dimension $d$.


%

\begin{theorem}\label{Prop:PExpRates}
	Let $\Pi=\Pi_T$ be the scaled $p$-exponential truncated series prior \eqref{Eq:pExpPrior}, where $W$is as in \eqref{Eq:pExpBasePrior} for $s>d/p+(d/2)\vee 2$ and $p\in[1,2]$. Suppose that $B_0\in H^{s+1}(\T^d)$. Then for $M>0$ large enough, as $T\to\infty$,
$$
	P_{B_0}\Pi\big(B : \|\nabla B - \nabla B_0\|_p
	\ge M T^{-s/(2s+d)}\big|X^T\big)
	\to 0.
$$
\end{theorem}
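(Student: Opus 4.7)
My plan is to verify the hypotheses of the paper's general posterior contraction theorem for the reversible diffusion model, adapted now to the $p$-exponential prior \eqref{Eq:pExpPrior}. Writing $\varepsilon_T := T^{-s/(2s+d)}$ for the target rate, the three standard ingredients to establish are: a prior mass (small-ball) estimate around $B_0$ at scale $\varepsilon_T$, sieves $\Theta_T$ carrying all but $e^{-CT\varepsilon_T^2}$ of the prior mass and having controlled metric entropy, and sufficient higher-smoothness control on $\Theta_T$ to access the stability estimates that convert the testing semimetric into the $L^p$-norm on the gradient.

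For the prior-mass step, I would approximate $B_0$ by its wavelet projection $B_{0,J} = P_J B_0 \in V_J$. The bias $\|B_0 - B_{0,J}\|$ is controlled at the correct rate by $B_0 \in H^{s+1}(\T^d)$ and standard linear approximation bounds. The fluctuation around $B_{0,J}$ is then handled by the $p$-exponential small-ball estimates from \cite{ADH20}; crucially, the rescaling in \eqref{Eq:pExpPrior} is calibrated precisely so that the required $\Pi$-mass of an $\varepsilon_T$-ball around $B_0$ is bounded below by $\exp(-C T \varepsilon_T^2)$. Log-concavity of the $p$-exponential density $f_p$ for $p \in [1,2]$ enables the translation argument that shifts the ball from the origin to $B_{0,J}$, and the tensor-product wavelet structure reduces the computation to a coordinatewise product bound.

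For the sieve, I would take
\[
\Theta_T = \bigl\{ B \in V_J : \|B\|_{B^{s+1}_{pp}} \le R_T,\ \|B\|_{C^{(d/2+\kappa)\vee 2}} \le R_T\bigr\}
\]
with $R_T$ a slowly growing radius. The hypothesis $s > [(d/2+\kappa)\vee 2] + d/p - 1$ guarantees the Besov embedding $B^{s+1}_{pp} \hookrightarrow C^{(d/2+\kappa)\vee 2}$, so both constraints are jointly satisfiable and the metric entropy of $\Theta_T$ in $L^p$ is controlled by standard wavelet-body estimates at the order $v_J \log R_T$. The prior mass of $\Theta_T^c$ is exponentially small by concentration of the Besov and Hölder norms under $p$-exponential measures \cite{ADH20}. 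The $C^{(d/2+\kappa)\vee 2}$-control is essential: it propagates through the nonlinear map $B \mapsto \mu_B = e^{2B}/\int e^{2B}$, keeping $\mu_B$ uniformly bounded in a higher smoothness class on the sieve, which is what the paper's PDE-based stability estimates for the inverse map $\mu \mapsto \tfrac12 \nabla \log \mu$ require.

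The main obstacle, and the step requiring the most care, is translating the general theorem's conclusion into the desired $\|\nabla B - \nabla B_0\|_p$ statement. The testing bounds naturally live in an $L^2(\mu_{B_0})$-type semimetric on the gradient, yielding $L^2$-contraction of $\nabla B$; using the uniform two-sided bounds on $\mu_{B_0}$ available on the compact torus together with the comparison $\|\cdot\|_p \lesssim \|\cdot\|_2$ on $\T^d$ for $p \in [1,2]$, this transfers to the required $L^p$-rate $\varepsilon_T$. The essential new input relative to Theorem~\ref{Prop:GaussRates} is the exponential tail and small-ball control for $p$-exponential measures, which replaces Gaussian concentration throughout; all other parts of the argument, including the construction of plug-in tests from preliminary estimators for $\mu_B$ and the PDE-based stability, are inherited from the general framework and unchanged.
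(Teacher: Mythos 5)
There is a genuine gap in the final transfer step. You propose to first obtain an $L^2$-contraction rate $\varepsilon_T$ for $\nabla B$ and then pass to $L^p$ via the embedding $\|\cdot\|_p \lesssim \|\cdot\|_2$ on the compact torus. The embedding itself is fine, but you cannot actually establish the $L^2$ rate $\varepsilon_T$ for $p$-exponential priors with $p < 2$. The general contraction theorem (Theorem~\ref{thm:contraction_b}) requires the bias bound $\|\mu - P_J\mu\|_{W^{1,q}} \leq C_\Lambda \xi_T$ for invariant densities $\mu$ coming from the sieve, where $q$ is the norm in which the conclusion is stated. For the $p$-exponential prior, the natural control on sieve elements is $\|B\|_{B^{s+1}_{pp}} \lesssim 1$, which by Lemma~\ref{lem:exp_map} gives $\|\mu_B\|_{B^{s+1}_{pp}} \lesssim 1$ and hence the bias bound $\|\mu_B - P_J\mu_B\|_{W^{1,p}} \lesssim 2^{-Js} \simeq \varepsilon_T$ in $W^{1,p}$. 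But transferring $B^{s+1}_{pp}$ regularity to $H^{s+1}$ via the Besov embedding loses $d(1/p - 1/2)$ derivatives when $p < 2$, so all you can deduce is $\|\mu_B - P_J\mu_B\|_{W^{1,2}} \lesssim 2^{-J(s - d/p + d/2)}$, which is strictly slower than $\varepsilon_T$. The paper therefore deliberately states and applies Theorem~\ref{thm:contraction_b} with $q = p$ so that the bias control matches the prior's intrinsic Besov regularity, rather than proving the $L^2$ rate and embedding downwards. Your plan would give a suboptimal rate for $p < 2$.

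A secondary issue: you take the sieve with a \emph{growing} radius $R_T$ in the $C^{(d/2+\kappa)\vee 2}$-norm bound. The concentration inequalities underlying the plug-in tests (Proposition~\ref{prop:sup_prob}, Lemma~\ref{lem:W11_exp}) have constants that depend on an \emph{upper bound} for $\|\mu\|_{C^{(d/2+\kappa)\vee 2}}$, and these constants must remain uniformly bounded over the sieve for the type-II error bounds to be at the right exponential scale. The paper's set $\mathcal{B}_T$ in~\eqref{B_T set} keeps this bound fixed at a (possibly large but) constant $M$, which is compatible with the $p$-exponential concentration results of~\cite{ADH20} via the decomposition $B = B_1 + B_2$ with vanishing $\|B_1\|_\infty$ and bounded $\|B_2\|_{B^{s+1}_{pp}}$. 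If instead you let $R_T \to \infty$, you would need to rework those concentration estimates to track the dependence on $R_T$ and show the degradation can be absorbed, which the current argument does not do.
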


	As in Theorem \ref{Prop:GaussRates}, the same rate of contraction is obtained for $\|B - B_0\|_{W^{1,p}}$ under the identifiability assumption $B,B_0 \in \dot{L}^2(\T^d)$ using the Poincar\'e inequality. For $p=2$, Theorem \ref{Prop:PExpRates} implies the result in Example \ref{Ex:GaussWav} for truncated Gaussian wavelet series priors, but removes the additional assumption that $B_0\in C^{s+1}(\T^d)$ under a slightly stronger minimal smoothness assumption on $s$. The rate $T^{-s/(2s+d)}$ matches the minimax rate we expect in this setting, see Remark \ref{rem:minimax}.

The last theorem can be used to obtain the same convergence rate for the corresponding posterior mean using ideas from \cite{MNP20}. Unlike in the Gaussian case, the posterior mean does not in general equal the MAP estimate, and so does not necessarily inherit the interpretation as a penalized maximum likelihood estimator.

\begin{corollary}\label{Cor:PostMean2}
Let $\hat{B}_T =E^{\Pi_T}[B|X^T]$ be the posterior mean. Under the conditions of Theorem \ref{Prop:PExpRates}, as $T \to \infty$,
$$\|\nabla \hat{B}_T - \nabla B_0\|_p = O_{P_{B_0}}(T^{-s/(2s+d)} ).$$ 
\end{corollary}

\begin{remark}[Loss functions]
Our proof approach requires that the induced prior on the invariant measure $\mu_B$ has enough regularity with respect to the chosen loss function not to induce too large a bias. Specifically, for $L^q$-loss, our general contraction Theorem \ref{thm:contraction_b} below requires that $\|\mu_B - P_J \mu_B\|_{W^{1,q}} \lesssim T^{-s/(2s+d)}$ with high probability under the prior, where $W^{1,q}$ is the usual Sobolev space and we recall $P_J$ is the $L^2$-projection operator onto the wavelet space $V_J$ in \eqref{V_J}. Draws from $p$-exponential priors have sample smoothness, as measured by the support and concentration properties of the prior (cf.~Lemma \ref{lem:prior_prob_event}), reflected in terms of $L^p$-type regularity, so that we verify the bias condition when this dominates the loss function, i.e. when $q \leq p$. For Gaussian priors ($p=2$), we can therefore employ $L^2$-loss in Theorem \ref{Prop:GaussRates}. Note that for any $p\in[1,2]$, Theorem \ref{Prop:PExpRates} implies that the posterior contracts at the nonparametric rate $T^{-s/(2s+d)}$ in $L^1$-loss.
\end{remark}

\begin{remark}[Posterior sampling]\label{rem:pExpComp}
For the $p$-exponential prior \eqref{Eq:pExpPrior}, the posterior density for any $B=\sum_{l\le J, r} B_{lr}\Phi_{lr}\in V_J$ takes the form $d\Pi(B|X^T)$ $\propto e^{-\Psi_T(B)}$, where
\begin{align*}
	\Psi_T( B) 
	&= -\ell_T(B)
	+ \tfrac{1}{p} T^\frac{d}{2s+d}  \| B \|^p_{B^{s+1}_{pp}} \\
	&=
		\frac{1}{2}\sum_{l\le J,r}\sum_{l'\le J,r'}B_{lr}B_{l'r'}\left[\int_0^T
		\nabla\Phi_{lr} (X_t).
		\nabla\Phi_{l'r'} (X_t)dt \right]\\
	&\quad
		- \sum_{l\le J,r}B_{lr} \left[ \int_0^T 
		\nabla\Phi_{lr} (X_t). dX_t\right]
		+ \frac{1}{p} T^\frac{d}{2s+d} \sum_{l\le J,r} 
		2^{pl\left(s+1+\frac{d}{2}-\frac{d}{p}\right)}| B_{lr} |^p .
\end{align*}
Since $p\ge1$, $\Psi_T( B)$ is, given the data $X^T$, a convex functional of $B$, implying that the posterior is log-concave. Approximate posterior sampling is thus feasible using MCMC algorithms for log-concave distributions. In particular, non-asymptotic convergence guarantees suitable for high-dimensional settings have been derived for gradient-based Langevin Monte Carlo methods \cite{D17,DM17,DM19}, as well as for the relevant non-smooth case $p=1$ using suitable proximal regularisation of the log-posterior density \cite{P15,DMP18}.
\end{remark}

\subsection{A general contraction theorem for multi-dimensional diffusions with gradient vector field drift}
\label{Sec:GenTheo}

The results for Gaussian and $p$-exponential priors presented in the preceding sections are based on the following general contraction rate theorem for the drift $b = \nabla B$. We employ the testing approach of \cite{ghosal2000}, which requires the construction of suitable tests with exponentially decaying type-II errors. This has been done in \cite{vdMeulen2006} for the `natural distance' for this model, which is an observation-dependent `random Hellinger semimetric'. In dimension $d=1$, this can be related to the $L^2$ distance using the theory of diffusion local times, something which is unavailable in dimension $d> 1$. We instead directly construct plug-in tests based on the concentration properties of preliminary estimators following ideas from the i.i.d.~density estimation model \cite{GN2011}. In the present multi-dimensional diffusion setting, suitable estimators can be obtained by exploiting the one-to-one connection between the potential $B$ and the invariant measure $\mu_B$ given by \eqref{Eq:InvMeas}.

\begin{theorem}\label{thm:contraction_b}
Let $q\in[1,2]$, $J=J_T\in \mathbb{N}$, $\eps_T\to 0$ and $\xi_T \to 0$ satisfy $2^J\to \infty$ , $T\eps_T^2 \to \infty$ and $T^{-1/2}2^{Jd/2} + \eps_T =O( \xi_T)$ as $T\to\infty$, and let $\Pi=\Pi_T$ be priors for $B$ supported on the Banach space $C^2(\T^d)$. Assume further that
\begin{equation}
\label{Eq:QuantCond}
	2^{J[d/2+\kappa+(d/2+\kappa-1)_+]}\eps_T=O(1) 
	\qquad \text{and} \qquad  T^{-1/2} 2^{J[d+\kappa+(d/2+\kappa-1)_+]}=O(1)
\end{equation}
for some $\kappa>0$. Consider sets
\begin{align}
\label{Eq:LambdaTProp}
	\Lambda_T  \subseteq \left\{ \mu  : \int_{\T^d} \mu(x)dx = 1, ~ \mu(x) 
	\geq \delta , ~ \|\mu\|_{C^{(d/2+\kappa)\vee 2}} \leq m
	,~  \|\mu-P_J\mu\|_{W^{1,q}} \leq C_\Lambda \xi_T \right\}
\end{align}
for some $\delta,C_\Lambda,m>0$ and define $\Theta_T = \{ B : \nabla B = \tfrac{1}{2} \nabla \log \mu \text{ for some } \mu\in \Lambda_T\}$. Let $B_0$ be the true potential and suppose that $\mu_0 = \frac{e^{2B_0}}{\int_{\T^d}e^{2B_0(y)}dy}$ satisfies $\|\mu_0-P_J\mu_0\|_{W^{1,q}} \leq D_0\xi_T$.
Suppose further that
\begin{itemize}
\item[(i)] $\Pi(\Theta_T^c) \leq e^{-(C+4)T\eps_T^2}$,
\item[(ii)] there exist deterministic sets $\mathcal{SB}_T$ for $B$ with $\Pi( \mathcal{SB}_T) \geq e^{-CT\eps_T^2}$ and
\begin{equation}\label{SB}
	P_{B_0} \left(\sup_{B\in \mathcal{SB}_T}  
	\int_0^T  \|\nabla B(X_s)-\nabla B_0(X_s)\|^2 ds
	 \leq T \eps_T^2 \right) \to 1.
\end{equation}
\end{itemize} 
Then for $M>0$ large enough, as $T\to \infty$,
$$
	P_{B_0} \Pi(B:\|\nabla B-\nabla B_0\|_q \geq M\xi_T|X^T) \to 0.
$$
\end{theorem}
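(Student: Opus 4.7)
I plan to apply the standard testing approach to posterior contraction \cite{ghosal2000,vdvaart2008}, in the version that accommodates log-likelihood-ratio control in probability, as in \cite{vWvZ16}. This requires three inputs: a sieve $\Theta_T$ of exponentially small prior complement; a prior-mass bound on a KL-type neighbourhood of $B_0$; and a sequence of tests $\Psi_T$ separating $B_0$ from $\{B\in\Theta_T:\|\nabla B-\nabla B_0\|_q\geq M\xi_T\}$ with both error probabilities decaying like $e^{-KT\xi_T^2}$. The sieve input is precisely hypothesis (i); the KL neighbourhood input is extracted from (ii); and the only genuinely new step is constructing the tests.

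\textbf{Prior mass via Girsanov.} From \eqref{Eq:LogLik},
$$
\ell_T(B)-\ell_T(B_0)=\int_0^T\nabla(B-B_0)(X_s).dW_s-\tfrac{1}{2}\int_0^T\|\nabla B-\nabla B_0\|^2(X_s)\,ds,
$$
so both the KL divergence and its variance under $P_{B_0}$ are controlled by $E_{B_0}\int_0^T\|\nabla B-\nabla B_0\|^2(X_s)\,ds$. On the event in (ii), the quadratic term is bounded below by $-T\eps_T^2/2$ uniformly over $\mathcal{SB}_T$, and the martingale term, having quadratic variation $\leq T\eps_T^2$ there, is of order $\sqrt T\eps_T=o(T\eps_T^2)$ thanks to $T\eps_T^2\to\infty$. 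Combined with $\Pi(\mathcal{SB}_T)\geq e^{-CT\eps_T^2}$, this yields the usual evidence lower bound $\int e^{\ell_T(B)-\ell_T(B_0)}d\Pi(B)\geq e^{-(C+1)T\eps_T^2}$ with $P_{B_0}$-probability tending to one, sufficient for the denominator of the posterior.

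\textbf{Plug-in tests and stability.} Following \cite{GN2011,AN19}, I build tests around a preliminary wavelet-projected estimator of the invariant measure, $\hat\mu_T=\sum_{\ell\leq J,r}\hat\mu_{\ell r}\Phi_{\ell r}$ with $\hat\mu_{\ell r}=T^{-1}\int_0^T\Phi_{\ell r}(X_s)\,ds$, and take
$$\Psi_T=\mathbb 1\{\|\hat\mu_T-\mu_{B_0}\|_{W^{1,q}}\geq\tau\xi_T\}$$
for a suitably small threshold $\tau>0$. The identity $\nabla B=\tfrac12\nabla\log\mu_B$, combined with the sieve bounds $\mu_B,\mu_{B_0}\geq\delta$ and $\|\mu_B\|_{C^1},\|\mu_{B_0}\|_{C^1}\lesssim m$, gives by direct computation the stability estimate
$$\|\nabla B-\nabla B_0\|_q\leq C_{\delta,m}\|\mu_B-\mu_{B_0}\|_{W^{1,q}}.$$
Thus, provided I can establish the uniform exponential concentration
\begin{equation}
\label{Eq:PlanConc}
\sup_{B:\,\mu_B\in\Lambda_T}P_B\bigl(\|\hat\mu_T-\mu_B\|_{W^{1,q}}\geq c\xi_T\bigr)\leq e^{-KT\xi_T^2},
\end{equation}
the triangle inequality applied to $\hat\mu_T-\mu_{B_0}$ versus $\hat\mu_T-\mu_B$ makes $\Psi_T$ a valid separating test for all $M$ sufficiently large relative to $c/\tau$.

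\textbf{Main obstacle.} The crux of the argument is \eqref{Eq:PlanConc}. The bias part $\|P_J\mu_B-\mu_B\|_{W^{1,q}}\leq C_\Lambda\xi_T$ is built into $\Lambda_T$; for the stochastic part $\|\hat\mu_T-P_J\mu_B\|_{W^{1,q}}$, my plan is, for each wavelet index $(\ell,r)$ with $\ell\leq J$, to solve the Poisson equation $L_B u_{\ell r}=\Phi_{\ell r}-\int_{\T^d}\Phi_{\ell r}\,d\mu_B$ on $\T^d$ and apply Itô's formula to $u_{\ell r}(X_t)$, giving
$$\hat\mu_{\ell r}-\int_{\T^d}\Phi_{\ell r}\,d\mu_B=T^{-1}[u_{\ell r}(X_T)-u_{\ell r}(X_0)]-T^{-1}\int_0^T\nabla u_{\ell r}(X_s).dW_s.$$
Bernstein-type inequalities for continuous martingales then produce per-coefficient exponential bounds, with constants driven by $\|u_{\ell r}\|_\infty$ and $\|\nabla u_{\ell r}\|_\infty$. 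Elliptic Schauder / $C^{k,\alpha}$ estimates for $L_B$ control these norms uniformly in $B$, since on the sieve $\mu_B\geq\delta$ and $\|\mu_B\|_{C^{(d/2+\kappa)\vee2}}\leq m$. Assembling per-coefficient bounds into the $W^{1,q}$ norm via the wavelet characterisation of Besov spaces and summing over $\ell\leq J$ and $r$, the quantitative hypotheses \eqref{Eq:QuantCond} are precisely what is required to absorb the Sobolev embedding factors arising from passing to $W^{1,q}$ and keep the stochastic rate at order $T^{-1/2}2^{Jd/2}=O(\xi_T)$. The whole technical load of the theorem sits in this concentration-plus-PDE step; once it is available, plugging the three ingredients into the testing theorem delivers the contraction at rate $\xi_T$.
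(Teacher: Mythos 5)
Your overall blueprint matches the paper: testing approach, sieve $\Theta_T$, evidence lower bound from condition (ii) via Girsanov and a martingale argument, and plug-in tests built from the wavelet estimator $\hat\mu_T$ of the invariant density together with the stability estimate relating $\|\nabla B-\nabla B_0\|_q$ to a $W^{1,q}$-distance between $\mu_B$ and $\mu_{B_0}$. (The paper's test uses $\|\nabla\log\hat\mu_T-\nabla\log\mu_0\|_q$ and yours uses $\|\hat\mu_T-\mu_{B_0}\|_{W^{1,q}}$, but these are interchangeable given the uniform bounds $\mu\geq\delta$, $\|\mu\|_{C^2}\leq m$ on $\Lambda_T$.) The evidence-lower-bound step in your sketch should be made precise via Jensen's inequality and the stochastic Fubini theorem — swapping $\int_{\mathcal{SB}_T}M_T^B\,d\overline\Pi(B)$ into a single continuous martingale with controlled quadratic variation (this is why $\mathcal{SB}_T$ must be deterministic) — but that is standard and is what the paper does in its Appendix B.2.

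The genuine gap is in your plan for establishing the uniform concentration \eqref{Eq:PlanConc}. Bounding each wavelet coefficient $\hat\beta_{\ell r}-\langle\mu,\Phi_{\ell r}\rangle_2$ separately via Bernstein (after solving the Poisson equation $L_B u_{\ell r}=\Phi_{\ell r}-\langle\mu,\Phi_{\ell r}\rangle_2$ and applying Itô) and then aggregating over all $(\ell,r)$ yields a rate that is \emph{too large}. Concretely: Bernstein at deviation level $e^{-MT\eps_T^2}$ gives, after a union bound over the $v_J\simeq 2^{Jd}$ indices, $|\hat\beta_{\ell r}-\langle\mu,\Phi_{\ell r}\rangle_2|\lesssim T^{-1/2}\|\Phi_{\ell r}\|_{H^{-1}}\sqrt{MT\eps_T^2}\simeq 2^{-\ell}\sqrt{M}\eps_T$ for each coefficient, and summing in the $H^1$ (or $B^1_{qq}$) norm gives
\begin{equation*}
\|\hat\mu_T-P_J\mu\|_{H^1}^2 \lesssim \sum_{\ell\leq J}\sum_r 2^{2\ell}\cdot 2^{-2\ell}M\eps_T^2 \simeq 2^{Jd}M\eps_T^2,
\end{equation*}
i.e.\ a stochastic error of order $2^{Jd/2}\eps_T$. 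But $2^{Jd/2}\eps_T$ is strictly larger than $\xi_T\simeq T^{-1/2}2^{Jd/2}+\eps_T$ under the standing assumptions $T\eps_T^2\to\infty$ and $2^J\to\infty$: union bound produces the \emph{product} $2^{Jd/2}\cdot\sqrt{T}\eps_T$ (in $\G_T$-units), whereas the theorem requires the \emph{sum} $2^{Jd/2}+\sqrt{T}\eps_T$. The quantitative hypotheses \eqref{Eq:QuantCond} only absorb the higher-order (Bernstein-tail and Poisson-solution $L^\infty$) terms, not this dominant contribution. To obtain the additive form you must control the supremum of the empirical process over the $H^{-1}$-unit ball of $V_J$ directly. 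The paper does exactly this: it writes $\|\hat\mu_T-P_J\mu\|_{H^1}=T^{-1/2}\sup_{\varphi\in V_J,\|\varphi\|_{H^{-1}}\leq 1}|\G_T[g_\varphi]|$ and invokes a generic-chaining bound (Proposition 1, based on Talagrand/Dirksen for processes with mixed sub-Gaussian/sub-exponential tails), in which the dimension of $V_J$ enters only through the $\gamma_2$-functional as $D_{\mathcal{F}_T}^{1/2}=2^{Jd/2}$, \emph{added} to (not multiplied by) $\sqrt{x}$. That chaining ingredient — uniform Bernstein increments in the $\|\cdot\|_{H^{-1}}$-metric over the index set $\{g_\varphi\}$, followed by Dudley/generic-chaining — is the missing key step, and without it the concentration rate you need cannot be reached.
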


\begin{remark}[Small ball probability]\label{rem:SB}
One can always take $\mathcal{SB}_T = \{ B: \|\nabla B -\nabla B_0\|_\infty^2 $ $ \leq \eps_T^2\}$, in which case \eqref{SB} holds automatically. However, for truncated wavelet series priors, this leads to unnecessary smoothness conditions on the true underlying function $B_0$, which can be avoided by instead taking $\mathcal{SB}_T = \{ B: \|\nabla B-\nabla B_{0}\|_{L^2(\mu_0)}^2 \leq \eps_T^2\}$ and verifying \eqref{SB} directly (cf.~Lemma \ref{Lemma:AnotherSmallBall}). The set $\mathcal{SB}_T$ will typically be a norm ball, and by `deterministic' we mean that the norm is non-random. This is required in order to apply certain martingale arguments in the proof of Theorem \ref{thm:contraction_b}.
\end{remark}

	Beyond the `usual' conditions arising from the testing approach \cite{ghosal2000}, the main additional assumption in the last theorem is that the prior puts most of its mass on a set of potentials $\Theta_T$, where the corresponding invariant measures $\mu\in\Lambda_T$ can be well approximated by their wavelet projections $P_J \mu$ (cf.~the last inequality in \eqref{Eq:LambdaTProp}). If this is the case, it suffices to study the deviations of the wavelet projection estimator $\hat{\mu}_T$ about its expectation $P_J \mu$. We then use results from empirical process theory, martingale theory and PDEs in order to obtain suitable concentration inequalities, and hence exponential probability bounds for the type-II errors of suitable tests.

	A significant additional difficulty in carrying out this program is the nonlinearity of the map $B \mapsto \mu_B$ given by \eqref{Eq:InvMeas}. One can relate the error of $\nabla B - \nabla  B_0$ to that of $\mu_B- \mu_{B_0}$ by a type of stability estimate of the form $\|\nabla B-\nabla B_0\|_{q} \lesssim \|\mu_B - \mu_{B_0}\|_{W^{1,q}}$, see the proof of Lemma \ref{lem:W11_exp} below. However, without controlling the norm $\|\mu\|_{C^{(d/2+\kappa)\vee 2}}$, the constant for this stability estimate grows rapidly, rendering it unusable in the proofs. This reinforces the connection of recovery of the gradient vector field $\nabla B_0$ in the diffusion model \eqref{model_B} with nonlinear inverse problems, where similar phenomena are often encountered, and indeed motivated the use of rescaled priors to overcome these considerable technical challenges \cite{MNP20,AN19,GN20,NS20,MNP20b}.

\section{Generalizations and extensions}\label{sec:extensions}

\subsection{Adaptation}\label{sec:adaptation}

The results in this paper are non-adaptive since the rescaled Gaussian and $p$-exponential priors require correct calibration based on the (typically unknown) regularity $s$ of the truth to achieve optimal contraction rates. A natural Bayesian approach to adaptation is to assign a hyperprior to $s$, as is studied for example in the one-dimensional diffusion setting using local-time techniques in \cite{vWvZ16}, which we recall are not available in dimension $d\geq 2$.

Our alternative approach of using plug-in tests based on estimators satisfying good concentration inequalities requires the posterior to concentrate on sets of bounded $\|\mu\|_{C^{(d/2+\kappa)\vee 2}}$-norm in order to control the constant in the stability-type estimate $\|\nabla B-\nabla B_0\|_{q} \lesssim \|\mu_B - \mu_{B_0}\|_{W^{1,q}}$. We prove this for non-adaptive priors using the rescaling combined with precise isoperimetric inequalities for Gaussian and $p$-exponential measures, but are unable to verify this for hierarchical constructions. This additional difficulty stems from the nonlinearity of the problem rather than the choice of prior, and indeed establishing adaptation is a general open problem for Bayesian nonlinear inverse problems. We expect that adaptation is possible in our model, but proving this will require novel ideas to deal with nonlinear maps.

\subsection{Non-periodic potentials}\label{sec:nonperiod}

We restricted here to periodic potentials $B$ to ensure recurrence and mixing of the diffusion and simplify certain elliptic PDE arguments. The fundamental issue is that to be in the ergodic setting, where our statistical analysis is relevant, one must have enough recurrence to essentially confine the diffusion to a bounded set. Options include enforcing a suitable drift condition to prevent the particle escaping to infinity or restricting to bounded domains. We discuss some of the technical challenges involved in extending our results to non-periodic potentials in these two cases.

A key step in our approach is establishing concentration inequalities for empirical processes (Proposition \ref{prop:sup_prob}), which follow from martingale techniques and properties of the generator $L_{\nabla B}$ defined in \eqref{Eq:generator}, written $L_B$ for simplicity, which is an elliptic second order partial differential operator. In particular, we rely on a standard regularity estimate for solutions to the Poisson equation of the form
\begin{equation}\label{eq:Poisson}
\|u_f\|_{H^t} \lesssim \|f\|_{H^{t-2}}, \qquad \text{where} \qquad L_{B} u_f = f, \qquad \qquad f\in \dot{L}_{\mu_B}^2(\T^d),
\end{equation}
with uniform constants over certain sets of potentials  $B$, see Lemma \ref{regest} in the supplement \cite{supp} and Section 6 in \cite{nicklray2020}.
Periodicity ensures we can restrict to the torus $\T^d$ when studying the Poisson equation, in which case one can
use Fredholm theory and properties of Fourier series to establish existence and regularity of solutions to \eqref{eq:Poisson}, see Chapter II.3 in \cite{BJS64}. 
In particular, the solution map $L_B^{-1}:H^{t-2}(\T^d) \to H^t(\T^d)$ is a compact operator, a fact which no longer holds true when one considers unbounded domains. This lack of compactness is a key difficulty, which is why standard elliptic PDE techniques do not extend straightforwardly to unbounded sets. A possible alternative approach is to extend the probabilistic ideas of Pardoux and Veretennikov \cite{PV01}, who study existence and regularity of solutions to \eqref{eq:Poisson} using a stochastic solution representation. However, this will require substantial and new refined elliptic PDE results that are beyond the scope of this work.

To use existing PDE arguments involving compactness, one must therefore restrict to bounded domains and pick suitable boundary conditions. Dirichlet boundary conditions correspond to the particle being killed upon exiting the domain, and are thus inappropriate for our ergodic ($T\to\infty$) setting. Neumann boundary conditions correspond to reflecting the particle upon hitting the boundary, and one can in principle use more involved elliptic PDE techniques to obtain regularity estimates. However, additional probabilistic tools are then needed to deal with the boundary reflection, such as local times to use stochastic calculus as we do here. A third option is periodicity, which simplifies the boundary technicalities without changing the diffusion's behaviour in the interior of the domain. In all cases, one requires the diffusion to have a uniform notion of strong recurrence with the various models largely differing in how they model the boundary from a technical perspective. Periodicity represents a  suitable compromise preserving the main statistical ideas and implications without being overly encumbered by technicalities.

\subsection{Models with non-constant diffusivity}\label{sec:diffusivity}

We comment on some implications for likelihood-based estimation in generalizations of the models \eqref{model} and \eqref{model_B}. Consider observing a continuous trajectory $X^T = (X_t: 0 \leq t \leq T)$ from diffusion dynamics of the form
$$	dX_t = b(X_t) dt + \Sigma^{1/2} (X_t) dW_t,  \qquad X_0 = x_0 \in \R^d,  \quad t\geq 0,$$ 
with non-constant local variance (diffusion or volatility matrix) $\Sigma:\R^d \to \R^d \times \R^d$. For general positive-definite $\Sigma(\cdot)$, this model is not identifiable and so we restrict to the case of scalar local variance $\Sigma(X_t) = \sigma(X_t) I_d$, where $\sigma:\R^d \to [0,\infty)$ and $I_d$ is the $d\times d$ identity matrix.

	In this model, one can exactly recover the quadratic variation process of any component of $X$, namely $([X^i]_t = \int_0^t \sigma(X_s)^2 ds: 0 \leq t \leq T)$, $i=1,\dots,d$, and hence $\{ \sigma(X_t): 0 \leq t \leq T\}$ is perfectly identified from the data $X^T$. In the well-studied scalar case $d=1$, this corresponds to knowledge of $\{ \sigma(x): \inf_{t\in[0,T]} X_t \leq x \leq \sup_{t\in[0,T]} X_t\}$ and yields the conventional wisdom in the statistical diffusion literature with continuous data that one can treat $\sigma(\cdot)$ as known, usually taking $\sigma(x) \equiv 1$ for simplicity as we do here. For dimension $d\geq 2$, while we still perfectly identify $\sigma$ \textit{along the trajectory} of the diffusion, this trajectory now has zero Lebesgue measure in $\R^d$ (note that for $d\geq 2$, the diffusion process will not be recurrent). Thus the diffusivity function $\sigma(\cdot)$ is a non-trivial parameter and there may still be statistical interest in modelling it.

	Consider placing a prior on $\sigma(\cdot)$ and let $P_{b,\sigma} = P_{b,\sigma}^T$ be the law of the above process. Girsanov's theorem (e.g., Section 17.7 in \cite{B11}) implies that the two measures $P_{b,\sigma} = P_{b',\sigma'}$ are singular unless $\sigma = \sigma'$.  The likelihood $e^{\ell_T(b,\sigma)}$ is thus zero unless $\sigma$ \textit{exactly} matches the observed diffusivity along the trajectory. Therefore, for a Bayesian posterior to be well-defined, the prior must assign positive probability to $\{ \sigma(\cdot): \sigma(X_t) = ( \tfrac{d}{dt}[X^i]_t)^{1/2} , ~ 0\leq t \leq T\}$. i.e.~it must be conditioned to match these values along the trajectory. Since the random trajectory $X^T$ typically has fractal-like behaviour, this is a highly non-standard and non-trivial prior construction. It is a somewhat unusual feature of this model that any prior for $\sigma(\cdot)$ must heavily rely on the observed data $X^T$. Similar considerations apply to other likelihood-based procedures, such as maximum likelihood estimation.

	The above features stem from the continuous observation model and do not occur in the more realistic low frequency discrete observation model, where estimation of $(b,\sigma)$ is an ill-posed inverse problem, see \cite{GHR2004,NS17} for the scalar case $d=1$. While both cases are interesting mathematically, the continuous and discrete models are fundamentally different problems with regards to estimating the diffusivity $\sigma(\cdot)$. We are unaware of any results concerning minimax rates in dimension $d\geq 2$ in the low frequency setting.

\section{Proof of Theorem \ref{thm:contraction_b}}
\label{Sec:ProofMainTheo}

We employ the general testing approach for non-i.i.d.~sampling models \cite{ghosal2007} combined with tools from the diffusion setting \cite{vdMeulen2006}. In order to construct suitable plug-in tests, we extend ideas from the i.i.d.~density model \cite{GN2011} to the multi-dimensional diffusion setting with drift arising as a gradient vector field $b = \nabla B$.

	We start with the following contraction rate theorem, which applies also to non-reversible diffusions, based on the existence of abstract tests. In a slight abuse of notation, denote by $P_b$ the law of $(X_t: 0 \leq t \leq T)$ from model \eqref{model}, i.e.~we do not assume $b = \nabla B$ in the next result.

\begin{theorem}\label{thm_contraction_tests}
	Let $d_T$ be a semimetric on the parameter space $\mathcal{H} \subseteq C^1(\T^d)$ for the drift $b$ and let $\Pi = \Pi_T$ be priors for $b$. Let $\eps_T\to 0$ satisfy $\sqrt{T}\eps_T \to \infty$, let $\xi_T \to 0$, $\mathcal{H}_T \subseteq \mathcal{H}$ and let $\phi_T$ be a sequence of test functions satisfying
\begin{align*}
	P_{b_0} \phi_T \to 0, \qquad \qquad 
	\sup_{b \in \mathcal{H}_T :  d_T(b,b_0) \geq D \xi_T} P_b (1-\phi_T) \leq Le^{-(C+4)T\eps_T^2}
\end{align*}
for some $C,D,L>0$ with $\Pi(\mathcal{H}_T^c) \leq e^{-(C+4)T\eps_T^2}$. Suppose further that there exist deterministic sets $\mathcal{SB}_T \subseteq \mathcal{H}$ with $\Pi(\mathcal{SB}_T) \geq e^{-CT\eps_T^2}$ and
$$P_{b_0} \left(\sup_{b\in \mathcal{SB}_T}  \int_0^T \|b(X_s)-b_0(X_s)\|^2 ds \leq T \eps_T^2 \right) \to 1,$$
where $b_0$ is the true drift function. Then for $M>0$ large enough, as $T\to\infty$,
$$
	P_{b_0} \Pi (b:d_T(b,b_0) \geq M\xi_T|X^T) \to 0.
$$
\end{theorem}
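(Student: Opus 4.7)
The proof follows the now-standard testing-and-evidence-lower-bound strategy of Ghosal--van der Vaart, adapted to the diffusion setting as in van der Meulen et al.~\cite{vdMeulen2006}; the problem-specific ingredient is how the ``small ball'' hypothesis is tailored to the Girsanov likelihood ratio. My plan is to bound the posterior mass
$$
\Pi(b:d_T(b,b_0)\ge M\xi_T\mid X^T)
=\frac{\int_{\{d_T(b,b_0)\ge M\xi_T\}}e^{\ell_T(b)-\ell_T(b_0)}d\Pi(b)}{\int e^{\ell_T(b)-\ell_T(b_0)}d\Pi(b)}
$$
by bounding the denominator from below on a high-probability event and bounding the numerator from above in expectation. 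Recall that under $P_{b_0}$ one has $dX_t=b_0(X_t)dt+dW_t$, hence
$$
\ell_T(b)-\ell_T(b_0)=-\tfrac{1}{2}\int_0^T\|b(X_t)-b_0(X_t)\|^2dt+\int_0^T(b-b_0)(X_t)\cdot dW_t,
$$
so that the small ball hypothesis controls precisely the drift part of the log-likelihood ratio.

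For the evidence lower bound, I would restrict to $\mathcal{SB}_T$ and apply Jensen's inequality to the probability measure $d\Pi(\cdot)/\Pi(\mathcal{SB}_T)$ on $\mathcal{SB}_T$, obtaining
$$
\int e^{\ell_T(b)-\ell_T(b_0)}d\Pi(b)\ge \Pi(\mathcal{SB}_T)\exp\!\left(\tfrac{1}{\Pi(\mathcal{SB}_T)}\int_{\mathcal{SB}_T}[\ell_T(b)-\ell_T(b_0)]d\Pi(b)\right).
$$
On the event in the small ball hypothesis the drift contribution is at least $-\tfrac{1}{2}T\eps_T^2$. For the stochastic integral term, because $\mathcal{SB}_T$ is \emph{deterministic} I can swap the $d\Pi$ and $dW_t$ integrals by (stochastic) Fubini, reducing to the single Itô integral $\int_0^T\bar g(X_t)\cdot dW_t$ with integrand $\bar g=\int_{\mathcal{SB}_T}(b-b_0)d\Pi$. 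Its quadratic variation is dominated, via Jensen's inequality, by $\Pi(\mathcal{SB}_T)\cdot\int_{\mathcal{SB}_T}\int_0^T\|b-b_0\|^2(X_s)ds\,d\Pi(b)\le\Pi(\mathcal{SB}_T)^2T\eps_T^2$ on the small ball event, so Markov (or Doob) yields that it is $O_{P_{b_0}}(\Pi(\mathcal{SB}_T)\sqrt T\eps_T)=o(\Pi(\mathcal{SB}_T)T\eps_T^2)$ since $\sqrt T\eps_T\to\infty$. Combining with the prior mass bound $\Pi(\mathcal{SB}_T)\ge e^{-CT\eps_T^2}$ gives, with $P_{b_0}$-probability tending to one,
$$
\int e^{\ell_T(b)-\ell_T(b_0)}d\Pi(b)\ge e^{-(C+2)T\eps_T^2}.
$$

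For the numerator, I would split the region $\{b:d_T(b,b_0)\ge M\xi_T\}$ into its intersection with $\mathcal{H}_T$ and with $\mathcal{H}_T^c$, and further use the test $\phi_T$ as an indicator: the contribution of $\phi_T$ times anything is bounded by $\phi_T$ times the full evidence, giving a posterior-mass contribution of $\phi_T\to 0$ in $P_{b_0}$-probability. For the remaining two pieces I take $P_{b_0}$-expectation and use Fubini: on $\{d_T\ge M\xi_T\}\cap\mathcal{H}_T$ with the $(1-\phi_T)$ weight, the integrand satisfies $\E_{b_0}[(1-\phi_T)e^{\ell_T(b)-\ell_T(b_0)}]=P_b(1-\phi_T)\le Le^{-(C+4)T\eps_T^2}$ by the type-II bound; on $\mathcal{H}_T^c$ one simply has $\E_{b_0}e^{\ell_T(b)-\ell_T(b_0)}=1$, giving $\Pi(\mathcal{H}_T^c)\le e^{-(C+4)T\eps_T^2}$. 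Summing, the expectation of the numerator is $\le(L+1)e^{-(C+4)T\eps_T^2}$, so Markov's inequality combined with the denominator bound above shows that the ratio is $o_{P_{b_0}}(1)$.

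The main technical subtlety — and the one place where the diffusion structure really enters — is the Jensen plus Fubini argument for the stochastic integral in the evidence. One needs the deterministic nature of $\mathcal{SB}_T$ to legitimately interchange the (Itô) time integration with the prior integration, and one needs the small-ball quadratic bound to hold uniformly over $b\in\mathcal{SB}_T$ on a high-probability event so that the resulting quadratic variation is controlled. The rest is essentially routine bookkeeping once one has $P_b(1-\phi_T)=\E_{b_0}[(1-\phi_T)dP_b/dP_{b_0}]$ from Girsanov and the likelihood ratio being a mean-one martingale.
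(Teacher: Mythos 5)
Your proposal follows essentially the same route as the paper's proof: Jensen's inequality applied to the prior restricted to $\mathcal{SB}_T$ to lower-bound the evidence, stochastic Fubini (legitimate because $\mathcal{SB}_T$ is deterministic) to convert the prior-averaged It\^o integral into a single stochastic integral, and the standard Ghosal--Ghosh--van der Vaart treatment of the numerator via $\phi_T$, the type-II error bound on $\mathcal{H}_T$, and $\E_{b_0}[dP_b/dP_{b_0}]=1$ on $\mathcal{H}_T^c$. The one place your argument is slightly short is the control of the martingale term $Z_T^T=\int_{\mathcal{SB}_T}\int_0^T (b-b_0)(X_s)\cdot dW_s\,d\Pi(b)$: you invoke ``Markov or Doob'' to conclude $|Z_T^T|=O_{P_{b_0}}(\Pi(\mathcal{SB}_T)\sqrt{T}\eps_T)$, but your bound $[Z^T]_T\leq\Pi(\mathcal{SB}_T)^2T\eps_T^2$ holds only on the small-ball event of $P_{b_0}$-probability tending to one, not almost surely, so an unconditional application of Markov or Doob (which would require control of $\E_{b_0}[[Z^T]_T]$) is not immediately justified. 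The paper closes this cleanly via Bernstein's inequality for continuous local martingales, $\Pr(|M_T|\geq y,\,[M]_T\leq K)\leq 2e^{-y^2/(2K)}$, which has the quadratic-variation constraint built into the event; the equivalent patch in your phrasing is to stop $Z^T$ at the first time its quadratic variation exceeds $\Pi(\mathcal{SB}_T)^2T\eps_T^2$ and then apply Markov to the stopped martingale. With that fix, your argument coincides with the paper's.
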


	The proof of Theorem \ref{thm_contraction_tests} follows similarly to results in \cite{ghosal2000,vdMeulen2006} and is deferred to Section \ref{Sec:ProofTestTheo} of the Supplement. The required tests are contained in the next lemma, whose proof can be found in Section \ref{sec:tests} below.

\begin{lemma}\label{lem:tests}
	Let $q\in[1,2]$, $J=J_T\in \mathbb{N}$, $\eps_T\to 0$ and $\xi_T \to 0$ satisfy $2^J\to \infty$, $T\eps_T^2 \to \infty$ and $T^{-1/2}2^{Jd/2} + \eps_T =O( \xi_T)$ as $T\to\infty$. Assume further that 
$$
	2^{J[d/2+\kappa+(d/2+\kappa-1)_+]}\eps_T=O(1) \qquad \text{and} \qquad 
	T^{-1/2} 2^{J[d+\kappa+(d/2+\kappa-1)_+]}=O(1)
$$
for some $\kappa>0$. Consider sets
\begin{align*}
	\Lambda_T  \subseteq \left\{ \mu  : \int_{\T^d} \mu(x)dx = 1,
	 ~ \mu(x) \geq \delta , ~ \|\mu\|_{C^{(d/2+\kappa)\vee 2}} \leq m,
	 ~  \|\mu-P_J\mu\|_{W^{1,q}} \leq C_\Lambda \xi_T \right\}
\end{align*}
for some $\delta,C_\Lambda,m>0$ and define $\Theta_T = \{ B : \nabla B = \tfrac{1}{2} \nabla \log \mu \text{ for some } \mu\in \Lambda_T\}$. Let $B_0$ and $\mu_0$ be the true potential and invariant measure, respectively, and assume that $\|\mu_0-P_J\mu_0\|_{W^{1,q}} \leq D_0\xi_T$ for some $D_0>0$. Then for any $M>0$, there exist tests $\phi_T$ such that for $D=D(q,\delta,m,C_\Lambda,D_0,M)>0$ large enough,
\begin{align*}
	P_{B_0} \phi_T \to 0, \qquad \qquad \sup_{B \in \Theta_T:
	\|\nabla B-\nabla B_0\|_q \geq D \xi_T} P_B (1-\phi_T) 
	\leq 4e^{-MT\eps_T^2}.
\end{align*}
\end{lemma}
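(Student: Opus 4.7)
Following the plug-in testing strategy discussed in Section~\ref{Sec:GenTheo}, I would base the test on the wavelet projection estimator of the invariant measure. Define
$$
\hat\mu_T(x) := \sum_{l\le J,\, r} \hat m_{lr}\,\Phi_{lr}(x),
\qquad
\hat m_{lr} := \frac{1}{T}\int_0^T \Phi_{lr}(X_t)\,dt,
$$
which by ergodicity concentrates under $P_B$ around $P_J\mu_B$. The natural single plug-in test is
$$
\phi_T := \mathbbm{1}\{\|\hat\mu_T - P_J\mu_0\|_{W^{1,q}} > K\xi_T\}
$$
for a constant $K>0$ to be chosen. The central technical ingredient is a uniform exponential concentration inequality
\begin{equation}\label{Eq:PlanConc}
P_B\bigl(\|\hat\mu_T - P_J\mu_B\|_{W^{1,q}} > \eta\xi_T\bigr) \le 4\,e^{-MT\eps_T^2}
\end{equation}
valid for every $B$ with $\mu_B\in\Lambda_T$ and any prescribed $M>0$, where $\eta=\eta(M)>0$.

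To derive \eqref{Eq:PlanConc}, I would solve, for each wavelet, the Poisson equation $L_B u_{lr} = \Phi_{lr} - \int_{\T^d}\Phi_{lr}\,d\mu_B$ on $\T^d$, and apply It\^o's formula to decompose
$$
\hat m_{lr} - \int_{\T^d}\Phi_{lr}\,d\mu_B = \frac{1}{T}\bigl(u_{lr}(X_T) - u_{lr}(x_0)\bigr) - \frac{1}{T}\int_0^T \nabla u_{lr}(X_t).dW_t.
$$
The drift $\tfrac12\nabla\log\mu_B$ of $L_B$ inherits uniform $C^{(d/2+\kappa)\vee 2-1}$-smoothness from the assumption $\|\mu_B\|_{C^{(d/2+\kappa)\vee 2}}\le m$ in $\Lambda_T$, so standard elliptic regularity on $\T^d$ provides $B$-uniform $W^{k,p}$-bounds on $u_{lr}$ in terms of wavelet norms of $\Phi_{lr}$. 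Combined with Bernstein-type concentration for the continuous martingale (whose quadratic variation is $P_B$-a.s.\ controlled by the preceding PDE estimates), this yields exponential deviation bounds at the coefficient level, which are then summed in the $W^{1,q}$-norm; the two quantitative assumptions in \eqref{Eq:QuantCond} are exactly what is needed to make this summation produce the rate $\xi_T$.

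Given \eqref{Eq:PlanConc}, the type-I error $P_{B_0}\phi_T\to 0$ follows directly by applying the bound under $P_{B_0}$ together with $\|\mu_0 - P_J\mu_0\|_{W^{1,q}}\le D_0\xi_T$, provided $K\ge\eta+D_0$. The type-II error rests on the stability estimate
$$
\|\nabla B - \nabla B_0\|_q \lesssim \|\mu_B - \mu_0\|_{W^{1,q}},
$$
which I would obtain by writing $2(\nabla B-\nabla B_0) = \nabla\log\mu_B - \nabla\log\mu_0 = \mu_B^{-1}\nabla(\mu_B-\mu_0) - \mu_B^{-1}\mu_0^{-1}(\mu_B-\mu_0)\nabla\mu_0$ and using $\mu_B,\mu_0\ge\delta$ and $\|\nabla\mu_0\|_\infty\lesssim m$. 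Hence $B\in\Theta_T$ with $\|\nabla B-\nabla B_0\|_q\ge D\xi_T$ forces $\|\mu_B-\mu_0\|_{W^{1,q}}\ge cD\xi_T$ for some $c>0$, and the approximation bounds in $\Lambda_T$ then give $\|P_J\mu_B - P_J\mu_0\|_{W^{1,q}}\ge (cD - C_\Lambda - D_0)\xi_T$. Choosing $D$ large enough relative to $K,C_\Lambda,D_0,\eta$, the triangle inequality and \eqref{Eq:PlanConc} under $P_B$ yield $P_B(\phi_T=0)\le 4e^{-MT\eps_T^2}$ uniformly over the alternative.

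The principal obstacle is establishing \eqref{Eq:PlanConc} with the correct polynomial dependence on $2^J$ at high resolution scales. The Poisson operator $\Phi_{lr}\mapsto u_{lr}$ gains two derivatives but its norm degrades with the frequency $2^l$, so tight \emph{$B$-uniform} elliptic estimates and careful wavelet bookkeeping are needed to match the scale $2^J\simeq T^{1/(2s+d)}$ implicit in \eqref{Eq:QuantCond}. The higher-order smoothness floor $C^{(d/2+\kappa)\vee 2}$ on $\mu_B$ is exactly what ensures that the coefficients of $L_B$ are uniformly smooth enough for Calder\'on--Zygmund-type estimates on $\T^d$ with constants independent of $B$, which is crucial for the uniformity of the type-II bound.
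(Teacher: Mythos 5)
Your overall test architecture is sound and in fact slightly cleaner than the paper's: you test on $\|\hat\mu_T - P_J\mu_0\|_{W^{1,q}}$ rather than on $\|\nabla\log\hat\mu_T-\nabla\log\mu_0\|_q$, and you apply the logarithmic stability estimate (correctly computed) only to translate the alternative hypothesis $\|\nabla B-\nabla B_0\|_q\ge D\xi_T$ into a lower bound on $\|\mu_B-\mu_0\|_{W^{1,q}}$. This avoids the preliminary $L^\infty$ concentration step for $\hat\mu_T$, which the paper's proof of Lemma~\ref{lem:W11_exp} needs in order to keep $\hat\mu_T$ bounded below before taking logarithms. The Poisson-equation/It\^o/martingale-Bernstein machinery you invoke is exactly what the paper uses (Proposition~\ref{prop:sup_prob}).

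However, there is a genuine gap in the step you label ``exponential deviation bounds at the coefficient level, which are then summed in the $W^{1,q}$-norm.'' A coefficient-by-coefficient Bernstein bound gives, on a high-probability event (even after absorbing the $O(2^{Jd})$ union-bound overhead, which is harmless since $Jd\ll T\eps_T^2$), a uniform bound $|\hat m_{lr}-\langle\mu_B,\Phi_{lr}\rangle_2|\lesssim T^{-1/2}2^{-l}\sqrt{MT\eps_T^2}=2^{-l}\sqrt M\,\eps_T$. Summing in $H^1$ (which controls $W^{1,q}$ for $q\le 2$) then yields
\begin{equation*}
\|\hat\mu_T-P_J\mu_B\|_{H^1}^2\;=\;\sum_{l\le J}\sum_r 2^{2l}|\hat m_{lr}-\langle\mu_B,\Phi_{lr}\rangle_2|^2\;\lesssim\;2^{Jd}M\,\eps_T^2,
\end{equation*}
i.e.\ $\|\hat\mu_T-P_J\mu_B\|_{H^1}\lesssim 2^{Jd/2}\sqrt M\,\eps_T$, which exceeds the target $T^{-1/2}2^{Jd/2}+\eps_T\simeq\xi_T$ by the factor $2^{Jd/2}=T^{d/(4s+2d)}\to\infty$. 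The quantitative assumptions in \eqref{Eq:QuantCond} do not repair this; they control the Poisson part of the Bernstein tail, not the dimension penalty incurred by a naive union bound. The paper avoids this loss precisely by not treating coefficients one at a time: it writes $\|\hat\mu_T-P_J\mu\|_{H^1}=T^{-1/2}\sup_{\varphi\in V_J,\,\|\varphi\|_{H^{-1}}\le 1}|\G_T[g_\varphi]|$ by Hilbert-space duality and then bounds this supremum via Dirksen's generic chaining for processes with mixed (sub-Gaussian/sub-exponential) tails (Proposition~\ref{prop:sup_prob}). The chaining functional pays only a $D_{\Fcal_T}^{1/2}=O(2^{Jd/2})$ price on the \emph{sub-Gaussian} part, which after the $T^{-1/2}$ normalisation gives exactly $T^{-1/2}2^{Jd/2}$; a union bound instead pays $O(2^{Jd})$. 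So you need the chaining step --- or at least the duality reformulation together with a uniform bound over the $H^{-1}$-unit ball of $V_J$ --- rather than a termwise sum.

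A minor remark: your type-I argument does not need $D_0$ since you compare $\hat\mu_T$ to $P_J\mu_0$ (not $\mu_0$); taking $K>\eta$ suffices. Also note that both your proof and the paper's implicitly require $\mu_0\ge\delta$ and $\|\mu_0\|_{C^{(d/2+\kappa)\vee 2}}\le m$ (so that the concentration lemma applies under $P_{B_0}$); this is not stated explicitly in Lemma~\ref{lem:tests} but is in force in all applications.
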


\begin{proof}[Proof of Theorem \ref{thm:contraction_b}]
The conclusion of Theorem \ref{thm:contraction_b} follows by applying Theorem \ref{thm_contraction_tests} with the set $\mathcal{H}_T = \{b = \nabla B: B \in \Theta_T \}$, the distance $d_T(f,g) = \|f-g\|_q$ and the tests $\phi_T$ constructed in Lemma \ref{lem:tests}.
\end{proof}

\subsection{A concentration of measure result for empirical processes}
\label{Sec:EmpProc}

The following concentration inequality is a key technical tool in the proof of Theorem \ref{thm:contraction_b}, providing uniform stochastic control of functionals of the (possibly non-reversible) diffusion process \eqref{model}. It is based on a chaining argument for stochastic processes with mixed tails (cf.~Theorem 2.2.28 in Talagrand \cite{T14} and Theorem 3.5 in Dirksen \cite{D15}). We again write $P_b$ for the law of $X^T$ to emphasise that we do not assume $b = \nabla B$ in the next result. Recall the notation $\dot L^2 = \{f \in L^2 : \int_{\T^d} fdx = 0\} $ and $\dot{L}^2_\mu = \{ f \in L^2 : \int_{\T^d} fd\mu = 0\}$.

\begin{proposition}\label{prop:sup_prob}
	Suppose $b\in C^{(d/2+\kappa)\vee 1}(\T^d)$ for some $\kappa >0$, and let $\mF_T\subset V_J \cap \dot{L}_{\mu_b}(\T^d)$ for $J$ satisfying $2^{J[d/2+\kappa + (d/2+\kappa-1)_+]} \lesssim \sqrt{T}$. Define the empirical process
$$
	\G_T(f) := \frac{1}{\sqrt{T}}\int_0^Tf(X_s)ds , \qquad f\in \mathcal{F}_T,
$$
and let $D_{\mF_T}:= \textnormal{dim}(\mF_T)$ and $|\mF_T|_{H^{-1}}:=\sup_{f\in \mF_T} \|f\|_{H^{-1}}$. Then for all $T \geq \eta > 0$ and $x\geq 1$,
$$
	P_b\left( \sup_{f\in \mF_T} |\G_T(f)| \geq C|\mF_T|_{H^{-1}}
	\left\{ D_{\mF_T}^{1/2}+\sqrt{x} + T^{-1/2} 2^{J[d/2+\kappa+(d/2+\kappa-1)_+]}
	(D_{\mF_T}+ x) \right\} \right)\leq 2e^{-x},
$$
where $C$ depends on $d,\kappa,\eta$ and upper bounds for $\|b\|_{B_{\infty\infty}^{|d/2+\kappa-1|\vee 1}}$ and $\|\mu_b\|_\infty$.

\proof
	We first note that since $b \in C^1$, a corresponding unique invariant probability measure $\mu = \mu_b$ indeed exists by Proposition 1 of \cite{nicklray2020}.
For $f\in \dot{L}^2_\mu \cap V_J \subset L^2_\mu \cap H^{d/2+\kappa}$ and $b\in C^{d/2+\kappa}$, by Lemma \ref{regest} in the Supplement and the Sobolev embedding theorem, the Poisson equation $L_b u = f$ has a unique solution $L_b^{-1}[f]\in \dot L^2 \cap H^{d/2+\kappa+2}\subset C^2$ satisfying $L_bL_b^{-1}[f] = f$. For such $f$, we may thus define 
\begin{equation*}
\begin{split}
	Z_T(f) 
	&:=  
		\int_0^T \nabla L_b^{-1}[f](X_s).dW_s \\
	& =
		 L_b^{-1}[f](X_T) - L_b^{-1}[f](X_0) -  \int_0^T L_bL_b^{-1}[f](X_s) ds \\
	& = 
		L_b^{-1}[f](X_T) - L_b^{-1}[f](X_0) -  \sqrt{T}\G_T[f],
\end{split}
\end{equation*}
where we have used It\^o's lemma (Theorem 39.3 of \cite{B11}). Thus for $\mF_T \subset \dot{L}^2_\mu \cap H^{d/2+\kappa}$,
\begin{equation}\label{subgauss_process}
\sup_{f\in \mathcal{F}_T} |\G_T[f]| \leq \frac{1}{\sqrt{T}} \sup_{f\in \mathcal{F}_T}|Z_T(f)| + \frac{2}{\sqrt{T}} \sup_{f\in \mathcal{F}_T} \|L_b^{-1}[f]\|_\infty.
\end{equation}
We derive a concentration inequality for $\sup_f |Z_T(f)|$ and hence for $\sup_f |\G_T[f]|$.

Recall Bernstein's inequality for continuous local martingales (p.~153 of \cite{revuz1999}): if $M$ is a continuous local martingale vanishing at 0 with quadratic variation $[M]$, then for any stopping time $T$ and any $y,K>0$,
\begin{align}
\label{Bernstein}
	\Pr \left( \sup_{0\leq t \leq T}|M_t| \geq y, [M]_T \leq K \right) 
	\leq 2e^{-\frac{y^2}{2K}}.
\end{align}
For fixed $f$, $(Z_T(f):T\geq 0)$ is a continuous square integrable local martingale with quadratic variation $[Z_\cdot(f)]_T = \int_0^T \|\nabla L_b^{-1}[f](X_s)\|^2 ds$. Applying Bernstein's inequality,
\begin{equation}\label{QV_split}
\begin{split}
	P_b\left( |Z_T(f)| \geq x \right)  
	& \leq 
		P_b\left(|Z_T(f)| \geq x, [Z_\cdot(f)]_T \leq K_T(f)\right) 
		+ P_b\left([Z_\cdot(f)]_T > K_T(f)\right)\\
	& \leq 
		2\exp \left(-\tfrac{x^2}{2K_T(f) } \right)+ P_b([Z_\cdot(f)]_T > K_T(f))
\end{split}
\end{equation}
for any $x>0$ and $K_T(f)>0$. We now upper bound the right-hand side.

Since $x\mapsto \|x\|^2$ is a smooth map, the function
$$
	\gamma_f(x) = \|\nabla L_b^{-1}[f](x)\|^2 
	- \int_{\T^d}\|\nabla L_b^{-1}[f](y)\|^2 d\mu(y)
$$
is in $\dot{L}^2_\mu \cap H^{d/2+\kappa}$ for all $f\in\mF_T$. Recall the distance $d_L^2(f,g) := \sum_{i=1}^d \| \partial_{x_i}L_b^{-1}[f-g] \|_\infty^2$ defined in Lemma 1 of \cite{nicklray2020}. Using the Sobolev embedding theorem, Lemma \ref{regest} 
and the Runst-Sickel lemma (\cite{nicklray2020}, Lemma 2),
\begin{align*}
	d_L(\gamma_f,0) 
	&\lesssim 
		\left\| \gamma_f \right\|_{H^{d/2+\kappa-1}} \\
	& \lesssim 
		\sum_{i=1}^d \| (\partial_{x_i} L_b^{-1}[f])^2 
		- \| \partial_{x_i} L_b^{-1}[f] \|_{L^2(\mu)}^2 \|_{H^{d/2+\kappa-1}}\\
	& \lesssim 
		\sum_{i=1}^d  \| \partial_{x_i} L_b^{-1}[f]\|_\infty 
		 \| \partial_{x_i} L_b^{-1}[f]\|_{H^{(d/2+\kappa-1)_+}} 
		 + \| \nabla L_b^{-1}[f] \|_{L^2(\mu)}^2 \|1\|_{H^{d/2+\kappa-1}}\\
	& \lesssim \| f\|_{H^{d/2+\kappa-1}}  \| f\|_{H^{(d/2+\kappa-1)_+-1}} 
		+ \|\mu\|_\infty \|f\|_{H^{-1}}^2 \\
	& \lesssim 
		\| f\|_{H^{d/2+\kappa-1}}  \| f\|_{H^{(d/2+\kappa-1)_+-1}} ,
\end{align*}
where the constants depend only on $d,\kappa$ and upper bounds for $\|b\|_{B_{\infty\infty}^{|d/2+\kappa-1|\vee 1}}$ and $\|\mu\|_\infty$. Applying the exponential inequality in Lemma 1 of \cite{nicklray2020} to the class $\{\gamma_f,0\}$ gives for $y\geq 0$,
$$
	P_b \left( |\G_T[\gamma_f]| \geq 2T^{-1/2} \|L_b^{-1}[\gamma_f]\|_\infty 
	+ Cd_L(\gamma_f,0)(1+y) \right) \leq e^{-y^2/2}.
$$
One can identically prove that $\|L_b^{-1}[\gamma_f]\|_\infty \lesssim \| f\|_{H^{d/2+\kappa-1}}  \| f\|_{H^{(d/2+\kappa-1)_+-1}}$ with the constant depending on the same quantities as above. Since $\sqrt{T} \G_T[\gamma_f] = [Z_\cdot(f)]_T -  T \|\nabla L_b^{-1}[f]\|_{L^2(\mu)}^2$, this and the last two displays yield
$$
	P_b \left(  \left| [Z_\cdot(f)]_T  - T \|\nabla L_b^{-1}[f]\|_{L^2(\mu)}^2 \right| 
	\geq C \sqrt{T}  \| f\|_{H^{d/2+\kappa-1}}  \| f\|_{H^{(d/2+\kappa-1)_+-1}}
	(1+y) \right) \leq e^{-y^2/2}
$$
for all $T\geq \eta > 0$. Since by Lemma \ref{regest} in the Supplement, $\|\nabla L_b^{-1}[f]\|_{L^2(\mu)}^2 \lesssim \|\mu\|_\infty \|f\|_{H^{-1}}^2$, substituting the last display into \eqref{QV_split} with $K_T(f) = CT \|f\|_{H^{-1}}^2 + C \sqrt{T}  \| f\|_{H^{d/2+\kappa-1}}$ $  \| f\|_{H^{(d/2+\kappa-1)_+-1}}(1+y)$ gives for all $x,y\geq 0$
\begin{align*}
	P_b&\left( |Z_T(f)| \geq x \right)\\
	&\leq 
	2\exp \left(-\frac{x^2}{2CT \|f\|_{H^{-1}}^2 
	+ 2C \sqrt{T}  \| f\|_{H^{d/2+\kappa-1}}  \| f\|_{H^{(d/2+\kappa-1)_+-1}}
	(1+y) } \right) + e^{-y^2/2}.
\end{align*}

	For $f \in V_J$ and $u\geq -1$, $\|f\|_{H^u}^2 = \sum_{l\leq J} \sum_r 2^{2ul} |\langle f,\Phi_{lr}\rangle_2|^2 \leq 2^{2J(u+1)}\|f\|_{H^{-1}}^2$. Using this bound, that $2^{J[d/2+\kappa + (d/2+\kappa-1)_+]} \lesssim \sqrt{T}$ and setting $y = x/(CT\|f\|_{H^{-1}}^2)^{1/2}$, the last display gives
\begin{align*}
	P_b\left( |Z_T(f)| \geq x \right)  \leq 3\exp \left(-\frac{x^2}{2CT \|f\|_{H^{-1}}^2 
	+ 2C 2^{J[d/2+\kappa+(d/2+\kappa-1)_+]} \| f\|_{H^{-1}} x } \right).
\end{align*}
Using the linearity of $f\mapsto Z_T(f)$ and rearranging, we get the following Bernstein inequality:
\begin{align*}
	P_b\left( |Z_T(f-g)| \geq C\|f-g\|_{H^{-1}} 
	(\sqrt{Tz} + 2^{J[d/2+\kappa+(d/2+\kappa-1)_+]}z) \right)  \leq 3e^{-z},
\end{align*}
where again $C$ depends on $d,\kappa,\eta$ and upper bounds for $\|b\|_{B_{\infty\infty}^{|d/2+\kappa-1|\vee 1}}$ and $\|\mu\|_\infty$.

%


We now apply Theorem 3.5 in Dirksen \cite{D15}, which is a refinement of Theorem 2.2.28 in Talagrand \cite{T14}, to bound the supremum of the random process $(Z_T(f))_{f\in\Fcal_T}$.  In particular, the last display shows that the process has ‘mixed tails' (cf.~(3.8) in \cite{D15}) with respect to the metrics $d_1(f,g) = C2^{J[d/2+\kappa+(d/2+\kappa-1)_+]}\|f-g\|_{H^{-1}}$ and $d_2(f,g) = C\sqrt T\|f-g\|_{H^{-1}}$. The diameters $\Delta_{d_1}(\Fcal_T)$ and $\Delta_{d_2}(\Fcal_T)$ appearing in the second display of Theorem 3.5 in \cite{D15} can be bounded by
$$
	\Delta_{d_1}(\Fcal_T):=\sup_{f,g\in\Fcal_T}d_1(f,g)\le 2C2^{J[d/2+\kappa+(d/2+\kappa-1)_+]}|\Fcal_T|_{H^{-1}},
$$ 
$$
	\Delta_{d_2}(\Fcal_T):=\sup_{f,g\in\Fcal_T}d_2(f,g)
	\le 2C\sqrt{T}|\Fcal_T|_{H^{-1}},
$$ 
so that Theorem 3.5 of \cite{D15} yields that for all $x\geq 1$,
$$P_b\left( \sup_{f\in \mF_T} |Z_T(f)| \geq C \left( \gamma_2(\mF_T,d_2) + \gamma_1 (\mF_T,d_1) + |\mF_T|_{H^{-1}} \left\{ \sqrt{Tx} + 2^{J[d/2+\kappa+(d/2+\kappa-1)_+]} x \right\} \right) \right)\leq e^{-x},$$
where $\gamma_1, \gamma_2$ are `generic chaining functionals'. Using the estimate (2.3) in \cite{D15}, and recalling that $\Fcal_T\subset V_J$ has dimension $D_{\Fcal_T},$ we can bound 
$\gamma_2(\Fcal_T,d_2)$ by a multiple of
\begin{align*}
	\int_0^\infty \sqrt{\log N(\Fcal_T, d_2,\eta)}d\eta
	&\le
		\int_0^{\Delta_{d_2}(\Fcal_T)}
		\sqrt{D_{\mF_T}\log\left( \frac{3\Delta_{d_2}(\Fcal_T)}
		{\eta}\right)}d\eta\\
	& = D_{\mF_T}^{1/2} \Delta_{d_2}(\Fcal_T) \int_0^1
		\sqrt{\log\left( 3/u\right)}du\\
	&\le
		C_1D_{\mF_T}^{1/2}\sqrt{T}|\Fcal_T|_{H^{-1}}
\end{align*}
for $C_1>0$, where the first inequality follows from the usual metric entropy estimate for balls in finite-dimensional Euclidean spaces \cite[Proposition 4.3.34]{ginenickl2016}.
Similarly, $\gamma_1(\Fcal_T,d_1)$ is bounded by a multiple of
\begin{align*}
	\int_0^\infty \log N(\Fcal_T, d_1,\eta) d\eta
	&\le
		D_{\mF_T}\int_0^{\Delta_{d_1}(\Fcal_T)}\log\left( \frac{3\Delta_{d_1}
		(\Fcal_T)}{\eta}\right)d\eta\\
	&\le
		C_2D_{\mF_T}\Delta_{d_1}(\Fcal_T)
	\le
		C_3D_{\mF_T}2^{J[d/2+\kappa+(d/2+\kappa-1)_+]}
		|\Fcal_T|_{H^{-1}}.
\end{align*}
In summary, Theorem 3.5 of \cite{D15} implies that for all $x\geq 1$
$$P_b\left( \sup_{f\in \mF_T} |Z_T(f)| \geq C|\mF_T|_{H^{-1}} \left\{ \sqrt{T}(D_{\mF_T}^{1/2}+\sqrt{x}) + 2^{J[d/2+\kappa+(d/2+\kappa-1)_+]}(D_{\mF_T}+ x) \right\} \right)\leq e^{-x}.$$
This provides the concentration inequality for the first term in \eqref{subgauss_process}. For the second term in \eqref{subgauss_process}, using the Sobolev embedding theorem and Lemma \ref{regest} in the Supplement,
$$
	\|L_b^{-1}[f]\|_\infty \lesssim \|L_b^{-1}[f]\|_{H^{d/2+\kappa}} 
	\lesssim
	 \|f\|_{H^{d/2+\kappa-1}} \leq 2^{J[d/2+\kappa]}\|f\|_{H^{-1}} 
	\lesssim 2^{J[d/2+\kappa+(d/2+\kappa-1)_+]}|\Fcal_T|_{H^{-1}}
$$
for all $f\in \mF_T$, where the constant depends on $d,\kappa,\eta$ and an upper bound for $\|b\|_{B_{\infty\infty}^{|d/2+\kappa-1|\vee 1}}$. The result then follows from the last two displays and \eqref{subgauss_process}.
\endproof 

\end{proposition}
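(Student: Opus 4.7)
My plan is to transform the empirical process into a continuous martingale via the Poisson equation, concentrate it for fixed $f$ using a martingale Bernstein inequality, and then take the sup over $\mathcal{F}_T$ through a generic chaining argument adapted to mixed-tail processes.

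First, I would use standard elliptic regularity for the periodic generator $L_b = \tfrac{1}{2}\Delta + b\cdot\nabla$ to solve the Poisson equation $L_b u = f$ for each $f \in \mathcal{F}_T$; since $f \in V_J \subset H^{d/2+\kappa}$ and $b \in C^{d/2+\kappa}$, the solution $u = L_b^{-1}[f]$ lies in $\dot{L}^2 \cap H^{d/2+\kappa+2} \subset C^2$, with $\|u\|_{H^{d/2+\kappa+2}} \lesssim \|f\|_{H^{d/2+\kappa}}$. Applying It\^o's lemma to $u(X_t)$ yields the decomposition
$$\sqrt{T}\,\G_T(f) = u(X_T) - u(X_0) - Z_T(f), \qquad Z_T(f) := \int_0^T \nabla u(X_s)\cdot dW_s,$$
so the sup-bound reduces to bounding the boundary term $\|u\|_\infty \lesssim \|f\|_{H^{d/2+\kappa-1}} \lesssim 2^{J[d/2+\kappa]}\, |\mathcal{F}_T|_{H^{-1}}$, using Sobolev embedding and the finite-dimensional Bernstein-type inequality $\|f\|_{H^u} \lesssim 2^{J(u+1)}\|f\|_{H^{-1}}$ valid for $f \in V_J$ and $u\geq -1$, together with the remaining continuous martingale family $(Z_T(f))_{f \in \mathcal{F}_T}$.

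Second, for fixed $h = f-g \in \mathcal{F}_T - \mathcal{F}_T$, I would apply Bernstein's inequality for continuous martingales (\cite{revuz1999}) to $Z_T(h)$, whose quadratic variation is $[Z(h)]_T = \int_0^T \|\nabla L_b^{-1}[h](X_s)\|^2 ds$. This gives sub-Gaussian concentration conditional on an upper bound for $[Z(h)]_T$. To make that bound deterministic, I would apply a second exponential empirical-process inequality (e.g.~Lemma 1 of \cite{nicklray2020}) to the centred quadratic functional $\gamma_h(x) := \|\nabla L_b^{-1}[h](x)\|^2 - \|\nabla L_b^{-1}[h]\|_{L^2(\mu_b)}^2$, controlling its Sobolev norm via the Runst-Sickel product lemma (Lemma 2 of \cite{nicklray2020}) and the PDE estimate above. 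This shows $T^{-1}[Z(h)]_T$ concentrates around $\|\nabla L_b^{-1}[h]\|_{L^2(\mu_b)}^2 \lesssim \|h\|_{H^{-1}}^2$, and combining with the martingale Bernstein bound yields a mixed-tail increment estimate
$$P_b\!\left(|Z_T(h)| \geq C\|h\|_{H^{-1}}\bigl(\sqrt{Tz} + 2^{J[d/2+\kappa+(d/2+\kappa-1)_+]} z\bigr)\right) \leq 3e^{-z}.$$

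Third, this increment bound identifies $(Z_T(f))_{f\in\mathcal{F}_T}$ as a process with mixed tails (in the sense of \cite{D15}) for the scaled $H^{-1}$-metrics $d_2 = C\sqrt{T}\|\cdot\|_{H^{-1}}$ (sub-Gaussian part) and $d_1 = C\, 2^{J[d/2+\kappa+(d/2+\kappa-1)_+]}\|\cdot\|_{H^{-1}}$ (sub-exponential part). I would apply Theorem 3.5 of \cite{D15}, reducing to a bound on the generic chaining functionals $\gamma_1(\mathcal{F}_T,d_1)$ and $\gamma_2(\mathcal{F}_T,d_2)$. Since $\mathcal{F}_T \subset V_J$ has dimension $D_{\mathcal{F}_T}$, the standard metric entropy estimate for finite-dimensional Euclidean balls (Proposition 4.3.34 of \cite{ginenickl2016}) gives $\log N(\mathcal{F}_T,d_i,\eta) \lesssim D_{\mathcal{F}_T}\log(\Delta_{d_i}/\eta)$, and bounding the Dudley-type entropy integrals $\int_0^{\Delta_{d_2}}\sqrt{\log N}\,d\eta$ and $\int_0^{\Delta_{d_1}}\log N\,d\eta$ produces $\gamma_2 \lesssim D_{\mathcal{F}_T}^{1/2}\sqrt{T}|\mathcal{F}_T|_{H^{-1}}$ and $\gamma_1 \lesssim D_{\mathcal{F}_T}\, 2^{J[d/2+\kappa+(d/2+\kappa-1)_+]}|\mathcal{F}_T|_{H^{-1}}$. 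Adding the boundary contribution from step one and dividing by $\sqrt{T}$ recovers the claimed inequality.

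The main obstacle is the uniform, high-probability control of the quadratic variation in the second step: it requires a nested empirical-process argument, and the Runst-Sickel product estimate must be invoked carefully so that the final dependence is on $\|h\|_{H^{-1}}$ rather than a higher-order Sobolev norm -- losing this would destroy the $|\mathcal{F}_T|_{H^{-1}}$ scaling in the conclusion. The standing hypothesis $2^{J[d/2+\kappa+(d/2+\kappa-1)_+]} \lesssim \sqrt{T}$ is precisely what keeps the sub-exponential correction subleading in the Bernstein step. The generic chaining step is then largely mechanical once the right increment bound is established.
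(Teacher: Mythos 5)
Your proposal follows the paper's proof essentially step for step: the same Poisson-equation/Itô decomposition into a martingale $Z_T(f)$ plus a boundary term, the same martingale Bernstein inequality combined with the nested empirical-process control of the quadratic variation via Lemma 1 of \cite{nicklray2020} and the Runst-Sickel lemma, the same mixed-tail increment bound in $\|\cdot\|_{H^{-1}}$, and the same application of Dirksen's Theorem 3.5 with entropy bounds from the finite-dimensionality of $V_J$. The route and all key lemmas coincide with the published argument.
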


\subsection{Construction of tests and proof of Lemma \ref{lem:tests}}\label{sec:tests}

Using Proposition \ref{prop:sup_prob}, we now derive concentration inequalities for a preliminary estimator $\hat{\mu}_T$ of the invariant measure $\mu$, uniformly over certain sets $\Gamma(\delta,m)$ below. We can then exploit the correspondence $\mu \propto e^{2B}$ to obtain an estimator for the gradient vector field $\nabla B = \tfrac{1}{2} \nabla \log \mu = \tfrac{1}{2} (\nabla \mu)/\mu$ based on $\hat{\mu}_T$. Consider the wavelet estimator for the invariant measure
\begin{equation}\label{wav_estimator}
	\hat{\mu}_T(x) = \sum_{l \leq J} \sum_r \hat{\beta}_{lr} \Phi_{lr}(x),
	\qquad x\in\T^d,
\end{equation}
where $\hat{\beta}_{lr} = \tfrac{1}{T} \int_0^T \Phi_{lr}(X_t) dt$ and $J \to\infty$ as $T\to\infty$. We now prove the required concentration inequality for the induced estimator $\tfrac{1}{2} \nabla \log\hat{\mu}_T$ of $\nabla B$ under the conditions of Theorem \ref{thm:contraction_b}.
Recall that $P_J$ denotes the $L^2$-projection onto the wavelet approximation space $V_J$ given in \eqref{V_J}, and that we write $f\in \dot{L}^2_{\mu}$ if $\int_{\T^d} f d\mu = 0$.

\begin{lemma}\label{lem:W11_exp}
Let $q \in[1,2]$, $2^J\to \infty$ and $\eps_T\to 0$ satisfy $T\eps_T^2 \to \infty$. Assume further that 
$$
	2^{J[d/2+\kappa+(d/2+\kappa-1)_+]}\eps_T=O(1) \qquad \text{and} \qquad 
	T^{-1/2} 2^{J[d+\kappa+(d/2+\kappa-1)_+]}=O(1)
$$
for some $\kappa>0$. Define
\begin{align*}
	\Gamma = \Gamma(\delta,m) =  \left\{ \mu  : \int_{\T^d} \mu(x)dx = 1,
	 ~ \mu(x) \geq \delta \text{ for all }x\in \T^d , 
	 ~ \|\mu\|_{C^{(d/2+\kappa)\vee 2}} \leq m \right\}
\end{align*}
for some $\delta,m>0$. Then there exists $C = C(\delta,m,q)>0$ such that for all $T>0$ large enough and all $M>0$,
\begin{align*}
	 \sup_{\substack{B: \nabla B = \tfrac{1}{2} \nabla \log  \mu,\\ \mu\in \Gamma}}
	 & P_B \left(  \|\nabla 
	 \log \hat{\mu}_T - \nabla \log \mu\|_q \geq C\|\mu - P_J\mu\|_{W^{1,q}} 
	 + C(1+M)  \left( T^{-1/2} 2^{Jd/2} + \eps_T \right) \right) \\
	 & \qquad \leq 4e^{-MT\eps_T^2}.
\end{align*}

\begin{proof}
All constants in this proof are taken uniform over $\delta,m>0$ in the definition of the set $\Gamma$ above and we write $b_\mu = \frac{1}{2}\nabla \log \mu$. We first derive a preliminary $L^\infty$-convergence rate $\zeta_T$ for $\hat{\mu}_T$, for which we establish an exponential inequality. Write
\begin{align*}
	\|\hat{\mu}_T-P_J\mu\|_\infty 
	& = 
		\sup_{x\in \T^d} \left| \frac{1}{T} \int_0^T \sum_{l\leq J} 
		\sum_r [\Phi_{lr}(X_s)-\langle \mu, \Phi_{lr} \rangle_2 ]\Phi_{lr}(x) ds \right| 
	= 
		\frac{1}{\sqrt{T}} \sup_{x\in \T^d} |\G_T[h_x]|
\end{align*}
with $h_x(u) = \sum_{l\leq J} \sum_r [\Phi_{lr}(u) - \langle \mu,\Phi_{lr} \rangle_2] \Phi_{lr}(x) \in V_J \cap \dot{L}^2_\mu$ and $\G_T[f]=\tfrac{1}{\sqrt{T}}\int_0^Tf(X_s)ds$ the empirical process for $\mu$-centered functions $f\in \dot{L}^2_\mu$. Recalling that the periodized father wavelet $\Phi_{-10} \equiv 1$ (\cite{ginenickl2016}, p. 354) and that $\sup_x \sum_r \Phi_{lr}(x)^2 \lesssim 2^{Jd}$,
\begin{align*}
	\|h_x\|_{H^{-1}}^2 
	&= 
		\sum_{l=0}^J \sum_r 2^{-2l} |\Phi_{lr}(x)|^2 + \left| \sum_{l\leq J} 
		\sum_r \langle \mu,\Phi_{lr} \rangle_2 \Phi_{lr}(x)\right|^2 \\
	& \lesssim 
		\sum_{l=0}^J 2^{J(d-2)} + \|P_J\mu\|_\infty^2
	\lesssim 
		J_d 2^{J(d-2)_+},
\end{align*}
where $J_d = J^{1_{\{d=2\}}}$ and we have used $\|P_J\mu\|_\infty \leq \|\mu\|_\infty + \|\mu-P_J\mu\|_\infty \lesssim (1+2^{-J})\|\mu\|_{B_{\infty\infty}^1}$ $\lesssim m$. Applying Proposition \ref{prop:sup_prob} with $D_{\mF_T}= \text{dim}(\T^d)=d$, $|\mF_T|_{H^{-1}}=\sup_{x\in\T^d} \|h_x\|_{H^{-1}} = O(J_d^{1/2}$ $ 2^{J(d/2-1)_+})$ and $x=MT\eps_T^2\to\infty$ then gives
$$
	P_b\left( \|\hat{\mu}_T-P_J\mu\|_\infty \geq C(1+M)J_d^{1/2} 2^{J(d/2-1)_+}
	 \eps_T \right)\leq 2e^{-MT\eps_T^2}
$$
for $T>0$ large enough, having used that $2^{J[d/2+\kappa+(d/2+\kappa-1)_+]}\eps_T\lesssim 1$. Note that the constant in the last display depends on $d$, $\kappa$ and upper bounds for $\|\mu\|_\infty\lesssim \|\mu\|_{B_{\infty\infty}^1} \lesssim m$ and $\|b_\mu\|_{B_{\infty\infty}^{|d/2+\kappa-1|\vee 1}}$. For this last quantity, by the chain rule, for $\mu \geq \delta$ bounded away from zero, $\|b_\mu\|_{B_{\infty\infty}^s} \lesssim \|\nabla \log \mu\|_{B_{\infty\infty}^s} \lesssim \|\log \mu\|_{B_{\infty\infty}^{s+1}} \lesssim \|\log \mu\|_{C^{s+1}} \lesssim 1+\|\mu\|_{C^{s+1}}$ for all $s>0$. Thus in particular, $\|b_\mu\|_{B_{\infty\infty}^{|d/2+\kappa-1|\vee 1}} \lesssim 1+\|\mu\|_{C^{(d/2+\kappa)\vee 2}}\lesssim 1+m$ is also uniformly bounded over $\Gamma$. Combined with the bias bound $\|\mu-P_J\mu\|_\infty \lesssim 2^{-J} \|\mu\|_{B_{\infty\infty}^1} \lesssim 2^{-J}m$, this yields
\begin{equation}\label{L_inf_ineq}
P_b\left( \|\hat{\mu}_T-\mu\|_\infty \geq C(1+M)J_d^{1/2} 2^{J(d/2-1)_+} \eps_T + C2^{-J}m \right)\leq 2e^{-MT\eps_T^2}
\end{equation}
with $C$ a uniform constant over $\Gamma$. Set $\zeta_T = C(1+M)J_d^{1/2} 2^{J(d/2-1)_+} \eps_T + C2^{-J}m \lesssim 2^{J[d/2+\kappa+(d/2+\kappa-1)_+]}\eps_T + 2^{-J}m \to 0$. On the event $\{ \|\hat{\mu}_T - \mu\|_\infty \leq \zeta_T\}$, for $x\in \T^d$, $\mu \in \Gamma$ and $i=1,\dots,d$,
\begin{align*}
|\partial_{x_i} \log \hat{\mu}_T(x) - \partial_{x_i} \log \mu(x)| & = \left| \frac{\partial_{x_i} \hat{\mu}_T(x)}{\hat{\mu}_T(x)} - \frac{\partial_{x_i} \mu(x)}{\mu(x)} \right| \\
& \leq \left| \frac{\partial_{x_i} \hat{\mu}_T(x) - \partial_{x_i} \mu(x)}{\hat{\mu}_T(x)}\right| +|\partial_{x_i} \mu(x)| \left| \frac{\mu(x) - \hat{\mu}_T(x)}{\hat{\mu}_T(x) \mu(x)} \right| \\
& \leq  \frac{|\partial_{x_i} \hat{\mu}_T(x) - \partial_{x_i} \mu(x)|}{\delta - \zeta_T} +|\partial_{x_i} \mu(x)| \frac{|\hat{\mu}_T(x) - \mu(x)|}{\delta(\delta-\zeta_T)}.
\end{align*}
Taking the $q^{th}$ power, integrating, using that $\|\partial_{x_i} \mu\|_\infty \lesssim \|\mu\|_{C^2} \leq m$ and \eqref{L_inf_ineq} gives for large enough $T>0$,
\begin{equation}\label{log_bound}
\inf_{\substack{B: \nabla B = \tfrac{1}{2} \nabla \log  \mu,\\ \mu\in \Gamma}} P_B \left( \|\nabla \log \hat{\mu}_T - \nabla \log \mu\|_q \leq C(\delta,m) \|\hat{\mu}_T-\mu\|_{W^{1,q}} \right) \geq 1- 2e^{-MT\eps_T^2}
\end{equation}
since $\zeta_T \to 0$. It thus suffices to prove an exponential inequality for $\|\hat{\mu}_T-\mu\|_{W^{1,q}}$.

For $1\leq q \leq 2$, we have the continuous embedding $H^1(\T^d) = W^{1,2}(\T^d) \subset W^{1,q}(\T^d)$. For the variance term, by Hilbert space duality,
\begin{align*}
\|\hat{\mu}_T - P_J\mu\|_{H^1} &= \sup_{\varphi \in C^\infty: \|\varphi\|_{H^{-1}} \leq 1} \left| \int_{\T^d} [\hat{\mu}_T(x) - P_J\mu(x)] \varphi(x) dx \right| \\
& = \sup_{\varphi \in V_J: \|\varphi\|_{H^{-1}} \leq 1} \left| \sum_{l\leq J} \sum_r \langle \hat{\mu}_T - \mu,\Phi_{lr} \rangle_2  \langle \varphi, \Phi_{lr} \rangle_2 \right| \\
& = \sup_{\varphi \in V_J: \|\varphi\|_{H^{-1}} \leq 1} \left| \frac{1}{T}\int_0^T \sum_{l\leq J} \sum_r [\Phi_{lr}(X_s) - \langle \mu,\Phi_{lr} \rangle_2]  \langle \varphi, \Phi_{lr} \rangle_2 ds \right| \\
& = \frac{1}{\sqrt{T}} \sup_{\varphi \in V_J: \|\varphi\|_{H^{-1}} \leq 1} \left| \G_T[g_\varphi]  \right|,
\end{align*}
where $g_\varphi(u) = \sum_{l\leq J} \sum_r [\Phi_{lr}(u) - \langle \mu,\Phi_{lr} \rangle_2]  \langle \varphi, \Phi_{lr} \rangle_2 \in V_J \cap \dot{L}^2_\mu.$ Using that $\Phi_{-10} \equiv 1$, for $\|\varphi\|_{H^{-1}} \leq 1$,
\begin{align*}
	\|g_\varphi\|_{H^{-1}}^2 
	& =
		\sum_{l=0}^J \sum_r 2^{-2l}  |\langle \varphi , \Phi_{lr}\rangle_2|^2 
		+ \left| \sum_{l \leq J} \sum_r \langle \mu,\Phi_{lr} \rangle_2 \langle
		\varphi, \Phi_{lr} \rangle_2 \right|^2  \\
	& \leq 
		\|\varphi\|_{H^{-1}}^2 + |\langle \mu, \varphi \rangle_2|^2  \\ 
	& \leq 
		(1 + \|\mu\|_{H^1}^2 )\|\varphi\|_{H^{-1}}^2 \\
	&\lesssim 
		1 + \|\mu\|_{C^2}^2
	\leq 
		1 + m^2.
\end{align*}
Applying now Proposition \ref{prop:sup_prob} with $D_{\mF_T}= \text{dim}(V_J)=O(2^{Jd})$, 
$|\mF_T|_{H^{-1}}=\sup_{\varphi} \|g_\varphi \|_{H^{-1}} \lesssim 1+m$
 and $x=MT\eps_T^2\to\infty$ gives
$$P_B\left( \|\hat{\mu}_T - P_J\mu\|_{H^1} \geq C(1+M)  \left( T^{-1/2} 2^{Jd/2} + \eps_T \right) \right)\leq 2e^{-MT\eps_T^2},$$
where we have used $2^{J[d/2+\kappa+(d/2+\kappa-1)_+]}\eps_T\lesssim 1$ and $T^{-1/2} 2^{J[d+\kappa+(d/2+\kappa-1)_+]}\lesssim 1$ and where the constant in the last display again depends on $d$, $\kappa$ and $m$.
Using the embedding $H^1(\T^d) \subset W^{1,q}(\T^d)$, $1\leq q \leq 2$, this yields
$$P_B\left( \|\hat{\mu}_T - \mu\|_{W^{1,q}} \geq \|\mu - P_J\mu\|_{W^{1,q}} + C(1+M)  \left( T^{-1/2} 2^{Jd/2} + \eps_T \right) \right)\leq 2e^{-MT\eps_T^2}.$$
Combining the last inequality with \eqref{log_bound} proves the result.
\end{proof}
\end{lemma}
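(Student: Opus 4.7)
The plan is to reduce the bound on $\|\nabla\log\hat{\mu}_T - \nabla\log\mu\|_q$ to an estimate on $\|\hat{\mu}_T - \mu\|_{W^{1,q}}$ via a linearisation of the logarithm, and then to apply Proposition \ref{prop:sup_prob} twice. Since $\nabla\log \mu = \nabla\mu/\mu$, on an event where $\hat{\mu}_T \geq \delta/2$ the chain rule gives
$$\nabla\log\hat{\mu}_T - \nabla\log\mu = \frac{\nabla\hat{\mu}_T - \nabla\mu}{\hat{\mu}_T} + \nabla\mu\cdot\frac{\mu - \hat{\mu}_T}{\hat{\mu}_T\mu},$$
whose $L^q$-norm is controlled by a constant depending only on $\delta, m$ and $q$ times $\|\hat{\mu}_T - \mu\|_{W^{1,q}}$, uniformly over $\mu \in \Gamma(\delta,m)$ (using $\|\nabla\mu\|_\infty \lesssim \|\mu\|_{C^2} \leq m$).

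To realize this event, I would first derive an $L^\infty$-concentration inequality for $\hat{\mu}_T - \mu$. Pointwise, $\hat{\mu}_T(x) - P_J\mu(x) = T^{-1/2}\G_T[h_x]$, where $h_x(u) = \sum_{l\leq J,r}[\Phi_{lr}(u) - \langle\mu,\Phi_{lr}\rangle_2]\Phi_{lr}(x) \in V_J \cap L^2_\mu$; using orthonormality of the wavelets and $\|P_J\mu\|_\infty \lesssim m$, one has $\|h_x\|_{H^{-1}}^2 \lesssim J_d\, 2^{J(d-2)_+}$ (with $J_d = J$ for $d=2$ and $1$ otherwise). A first application of Proposition \ref{prop:sup_prob} with the $d$-dimensional index set $x \in \T^d$ and $x = MT\eps_T^2$ in the exponential, combined with the deterministic bias $\|\mu - P_J\mu\|_\infty \lesssim 2^{-J}m$, yields an $L^\infty$-error $\zeta_T \to 0$ (thanks to the hypothesis $2^{J[d/2+\kappa+(d/2+\kappa-1)_+]}\eps_T = O(1)$) with exceptional probability at most $2e^{-MT\eps_T^2}$.

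Next I would bound $\|\hat{\mu}_T - \mu\|_{W^{1,q}}$ itself. The embedding $H^1(\T^d) \hookrightarrow W^{1,q}(\T^d)$, valid for $q \in [1,2]$, reduces the variance contribution to $\|\hat{\mu}_T - P_J\mu\|_{H^1}$, which by Hilbert space duality equals the supremum over $\varphi \in V_J$ with $\|\varphi\|_{H^{-1}} \leq 1$ of $T^{-1/2}|\G_T[g_\varphi]|$, with $g_\varphi \in V_J \cap L^2_\mu$ built from the wavelet coefficients of $\varphi$ and satisfying $\|g_\varphi\|_{H^{-1}} \lesssim 1 + \|\mu\|_{H^1} \lesssim 1 + m$. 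A second application of Proposition \ref{prop:sup_prob} with $D_{\mathcal{F}_T} = \dim V_J \lesssim 2^{Jd}$ and $x = MT\eps_T^2$ produces the advertised variance rate $T^{-1/2}2^{Jd/2} + \eps_T$, provided the second quantitative hypothesis $T^{-1/2}2^{J[d+\kappa+(d/2+\kappa-1)_+]} = O(1)$ holds so that the Bernstein-type subexponential term in Proposition \ref{prop:sup_prob} is dominated by the Gaussian one. The deterministic bias $\|\mu - P_J\mu\|_{W^{1,q}}$ appears directly, and intersecting the two high-probability events gives the claimed total exceptional probability $4e^{-MT\eps_T^2}$.

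The main obstacle will be verifying that the implicit constants coming out of Proposition \ref{prop:sup_prob} are truly \emph{uniform} over $\mu \in \Gamma(\delta,m)$. The proposition's constants depend on upper bounds for $\|b\|_{B^{|d/2+\kappa-1|\vee 1}_{\infty\infty}}$ and $\|\mu_b\|_\infty$; for $b = \tfrac{1}{2}\nabla\log\mu$ with $\mu \geq \delta$ and $\|\mu\|_{C^{(d/2+\kappa)\vee 2}} \leq m$, the chain rule combined with the action of the smooth map $x \mapsto \log x$ on Besov spaces of functions bounded below by $\delta$ gives $\|\nabla\log\mu\|_{B^s_{\infty\infty}} \lesssim 1 + \|\mu\|_{C^{s+1}}$, hence $\|b\|_{B^{|d/2+\kappa-1|\vee 1}_{\infty\infty}} \lesssim 1 + m$; the bound on $\|\mu_b\|_\infty$ is immediate from $\|\mu\|_\infty \leq m$. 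This uniformity is what allows the final constant $C$ to depend only on $(\delta, m, q)$ and not on the particular $\mu \in \Gamma(\delta,m)$.
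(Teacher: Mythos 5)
Your proposal matches the paper's proof essentially step for step: the same reduction of $\nabla\log\hat\mu_T - \nabla\log\mu$ to $\|\hat\mu_T-\mu\|_{W^{1,q}}$ via the chain rule on the event $\hat\mu_T \geq \delta - \zeta_T$, the same two applications of Proposition \ref{prop:sup_prob} (to $h_x$ with $D_{\mathcal{F}_T}=d$ for the $L^\infty$ bound, and to $g_\varphi$ via $H^1$--$H^{-1}$ duality with $D_{\mathcal{F}_T}=\dim V_J$ for the $W^{1,q}$ bound), the same embedding $H^1\hookrightarrow W^{1,q}$, and the same verification that $\|b_\mu\|_{B^{|d/2+\kappa-1|\vee 1}_{\infty\infty}}$ and $\|\mu\|_\infty$ are uniformly bounded over $\Gamma(\delta,m)$ so the constants are uniform. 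The proposal is correct.
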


\proof[Proof of Lemma \ref{lem:tests}]
	Consider the test $\phi_T  = 1\{ \|\nabla \log \hat{\mu}_T - \nabla \log \mu_0\|_q \geq M_0 \xi_T\}$, where $\hat\mu_T$ is the wavelet estimator in \eqref{wav_estimator} and $M_0$ is to be selected below. Since $T^{-1/2} 2^{Jd/2} + \eps_T \lesssim \xi_T$ and $\|P_J \mu_0-\mu_0\|_{W^{1,q}} \leq D_0\xi_T$ by assumption, Lemma \ref{lem:W11_exp} gives that for any $M>0$ and large enough $T>0$,
\begin{align*}
P_{B_0} \left( \|\nabla \log \hat{\mu}_T - \nabla \log \mu_0\|_q \geq  C(D_0 + 1+M) \xi_T \right) \leq 4e^{-MT\eps_T^2}.
\end{align*}
Taking $M_0 > C(2+D_0)$, the type-I error then satisfies $P_{B_0} \phi_T \leq 4e^{-T\eps_T^2} \to 0$.

Turning to the type-II error, since $\|\mu-P_J\mu\|_{W^{1,q}} \leq C_\Lambda \xi_T$ for all $\mu \in \Lambda_T$, and $\Lambda_T \subset \Gamma(\delta,m)$ for $\Gamma(\delta,m)$ the set in Lemma \ref{lem:W11_exp}, applying that lemma yields that for all $M>0$ and large enough $T>0$,
\begin{align*}
 \sup_{\substack{B: \nabla B = \tfrac{1}{2} \nabla \log  \mu,\\ \mu\in \Gamma}} P_B\left( \|\nabla \log \hat{\mu}_T - \nabla \log \mu\|_q \geq C(C_\Lambda + 1+M) \xi_T \right) \leq 4e^{-MT\eps_T^2}.
\end{align*}
Now consider $B \in \Theta_T$ (implying $\nabla B = \tfrac{1}{2}\nabla \log \mu$ for some $\mu \in \Lambda_T$) such that $\|\nabla B - \nabla B_0\|_q = \tfrac{1}{2} \|\nabla \log\mu - \nabla \log \mu_0\|_q \geq D\xi_T$. Applying the triangle inequality and the last display,
\begin{align*}
P_B(1-\phi_T) & = P_B (\|\nabla \log \hat{\mu}_T - \nabla \log \mu_0\|_q \leq M_0 \xi_T)\\
& \leq P_B (\|\nabla \log \mu_0- \nabla \log \mu\|_q  - \|\nabla \log \mu - \nabla \log \hat{\mu}_T \|_q \leq M_0 \xi_T) \\
& \leq P_B ( (2D-M_0) \xi_T \leq \|\nabla \log \mu - \nabla \log \hat{\mu}_T \|_q )
\leq 4e^{-MT\eps_T^2},
\end{align*}
for $D>M_0/2 + C(C_\Lambda+1+M)/2$. This completes the proof.
\endproof 

\section{Proofs for Gaussian and $p$-exponential priors}
\label{Sec:ProofsGauss}

\subsection{Proof of Theorem \ref{Prop:GaussRates}}
\label{Sec:GaussRatesProof}

We verify the assumptions of Theorem \ref{thm:contraction_b} with $q=2$, $\eps_T = \xi_T \simeq T^{-s/(2s+d)}$ and $2^J \simeq T^{1/(2s+d)}$. The quantitative conditions \eqref{Eq:QuantCond} in Theorem \ref{thm:contraction_b} are then satisfied for all \textcolor{blue}{$s>(d-1)\vee(1/2)$} and all $0 < \kappa \leq (s+1-d)/2$ if $d\geq 2$ and $0 < \kappa \leq (s-1/2) \wedge (1/2)$ if $d=1$. In particular, we can take $\kappa >0$ arbitrarily small in what follows.

	As pointed out in Remark \ref{rem:SB}, the ‘small ball condition' (assumption  (ii)  in Theorem \ref{thm:contraction_b}) follows if we show that for some $C>0$,
\begin{equation}
\label{Eq:NthSmallBall}
	\Pi\left(\sum_{i=1}^d\|\partial_{x_i} B - \partial_{x_i} B_0\|^{2}_\infty 
	\le \eps_T^2\right )\ge e^{-CT\eps_T^2}.
\end{equation}
Note that
\begin{equation*}
\label{C1_bound}
	\left( \sum_{i=1}^d \|\partial_{x_i} B - \partial_{x_i} B_{0}\|_\infty^2 \right)^{1/2}
	\leq \sum_{i=1}^d \|\partial_{x_i}B - \partial_{x_i}B_0\|_\infty 
	\lesssim \|B-B_0\|_{C^1},
\end{equation*}
and since $\|B_0-B_{0,T}\|_{C^1}=O(T^{-s/(2s+d)})=O(\eps_T)$ by assumption, it thus suffices to lower bound $\Pi (\|B-B_{0,T}\|_{C^1} \leq \eps_T/2)$, upon replacing $\eps_T$ with a multiple of itself if necessary. Recall that the RKHS $\H_B$  of the scaled Gaussian process $B=W/T^{d/(4s+2d)}=W/(\sqrt{T}\eps_T)$ equals the RKHS $\H$ of $W$, with scaled norm $\|h\|_{\H_B}=\sqrt{T}\eps_T\|h\|_\H$. Since $\|B_{0,T}\|_\H = O(1)$, using Corollary 2.6.18 of \cite{ginenickl2016} we lower bound the probability of interest by
$$
	e^{-\frac{1}{2}\|B_{0,T}\|^2_{\H_B}} \Pi(\|B\|_{C^1} \leq \eps_T/2)
	\geq e^{-c_1T\eps_T^2} \Pi_W(\|W\|_{C^1} \leq \sqrt{T}\eps_T^2/2)
$$
for some $c_1>0$. Since $\sqrt{T}\eps_T^2 \to 0$ for $s>d/2$, the small ball estimate \eqref{SB_C1} in the Supplement \cite{supp} gives 
$$
	\Pi_W(\|W\|_{C^1} \leq \sqrt{T}\eps_T^2/2) 
	\geq e^{-c_2(\sqrt{T}\eps_T^2)^{-2d/(2s-d)}}
	= e^{-c_3 T\eps_T^2},
$$
for some $c_2,c_3>0$. Taking $C=c_1+c_3<\infty$, the last two displays yields \eqref{Eq:NthSmallBall} as required.

	For $M>0$ and $p=2$, let $\mB_T$ be the set in \eqref{B_T set} and define $\Lambda_T = \{ \mu_B = e^{2B}/\int_{\T^d}e^{2B}: B\in \mB_T\}$. Taking $M>0$ large enough, Lemma \ref{lem:prior_prob_event} (i) implies that $\Pi(\mB_T^c) \leq e^{-(C+4)T\eps_T^2}$ as required by assumption (i) in Theorem \ref{thm:contraction_b}. We now show that $\Lambda_T$ satisfies the inclusion condition \eqref{Eq:LambdaTProp}. Since $\|B\|_\infty \leq \|B\|_{C^{(d/2+\kappa)\vee 2}} \leq M$ for every $B\in \mB_T$, it holds that $\mu_B \geq e^{-4M}>0$ for all $\mu_B\in \Lambda_T$. Moreover, using again the boundedness of $B\in \mB_T$,
$$
	\|\mu_B\|_{C^{(d/2+\kappa)\vee 2}} 
	\leq e^{2M} \|e^{2B}\|_{C^{(d/2+\kappa)\vee 2}} 
	\lesssim 1 + \|B\|_{C^{(d/2+\kappa)\vee 2}} 
	+ \|B\|_{C^{(d/2+\kappa)\vee 2}}^{(d/2+\kappa)\vee 2} 
	\leq c(M),
$$ 
where for $d\le3$ the second inequality (with $C^{(d/2+\kappa)\vee 2}=C^2$ for $\kappa$ small enough) follows readily by differentiation, while for $d\ge 4$ it is implied by Lemma \ref{lem:exp_map} upon noting that $C^{(d/2+\kappa)\vee 2}=C^{d/2+\kappa}=B^{d/2+\kappa}_{\infty\infty}$, since $d/2+\kappa\notin\N$ for $\kappa$ small enough (cf.~Chapter 3 in \cite{ST87}). Finally, the bias bound follows from Lemma \ref{lem:bias} with $p=2$. This shows that $\Lambda_T$ satisfies the required assumptions in Theorem \ref{thm:contraction_b}.

	It remains only to consider the true potential $B_0$. However, since $B_0 \in H^{s+1}$, Lemma \ref{lem:exp_map} similarly implies that $\|\mu_0-P_J\mu_0\|_{W^{1,2}} \lesssim 2^{-Js} \|\mu_0\|_{H^{s+1}} \simeq \eps_T.$ The result thus follows from Theorem \ref{thm:contraction_b}.
\qed

\subsection{Proof of Theorem \ref{Prop:PExpRates}}
\label{Sec:pExpRatesProof}

We verify the assumptions of Theorem \ref{thm:contraction_b} with $q=p$, $\eps_T 
= \xi_T  \simeq T^{-s/(2s+d)}$ and $2^J \simeq T^{1/(2s+d)}$. Since by assumption $s>d/p+(d/2)\vee2 > (d-1)\vee (1/2)$, the quantitative conditions \eqref{Eq:QuantCond} in Theorem \ref{thm:contraction_b} are again satisfied for all $0 < \kappa \leq (s+1-d)/2$ if $d\geq 2$ and $0 < \kappa \leq (s-1/2) \wedge (1/2)$ if $d=1$. In particular, we can take $\kappa >0$ arbitrarily small in what follows. The remaining assumptions are verified using tools for $p$-exponential measures mainly due to \cite{ADH20}.

	We first consider assumption (ii)  in Theorem \ref{thm:contraction_b} with $\mathcal{SB}_T = \{B \in V_J \cap \dot{L}^2: \|\nabla B - \nabla B_0\|_{L^2(\mu_0)} \leq M\eps_T\}$ as suggested in Remark \ref{rem:SB}. Since $s>d/p+(d/2)\vee2 \geq d\vee (3/2)$ for $p\in[1,2]$ and the prior $\Pi$ arising as the law of $B$ in \eqref{Eq:pExpPrior} is supported on $V_J\cap \dot{L}^2$, Lemma \ref{Lemma:AnotherSmallBall} below implies that it is enough to show that for some $M,C>0$
$$
	\Pi( \|\nabla B - \nabla B_0\|_{L^2(\mu_0)} \le M\eps_T)\ge 
	e^{-CT\eps_T^2},
$$
where $\mu_0 \propto e^{2B_0}$ is the invariant density and $\|f\|^2_{L^2(\mu_0)} = \int_{\T^d}|f(x)|^2\mu_0(x)dx$. Since $\mu_0$ is bounded (\cite{nicklray2020}, Proposition 1), the probability on the left hand side is greater than 
$$
	\Pi(  \|\nabla B - \nabla B_0\|_{2} \le m_1\eps_T)\ge 
	\Pi(  \|B -  B_0\|_{H^1} \le m_2\eps_T)
$$	
for some $m_1, m_2 >0$. Let $P_JB_0$ the wavelet projection of $B_0\in H^{s+1}\cap \dot{L^2}$ onto $V_J$. Then $\|B_0 - P_J B_0\|_{H^1}\lesssim 2^{-J s}\simeq \eps_T$, so that by the triangle inequality the latter probability is lower bounded by $\Pi( \|B -  P_JB_0\|_{H^1} \le m_3\eps_T)$ for some $m_3>0$. In the language of \cite{ADH20}, the $\Zcal$-space (Definition 2.8 in \cite{ADH20}) associated to the $p$-exponential random element $B=W/(T^{d/(2s+d)})^{1/p}=W/(T\eps_T^2)^{1/p}$ is equal to $V_J\cap \dot{L^2}$, with norm
\begin{equation}
\label{Eq:Znorm}
	\|h\|_\Zcal
	=
		\Bigg(T\eps_T^2\sum_{l =0}^J \sum_r
		2^{p l \big(s+1+\frac{d}{2}-\frac{d}{p}\big)}|\langle h,\Phi_{l r}
		\rangle_2|^p\Bigg)^\frac{1}{p}
	=
		\big(T\eps_T^2\big)^\frac{1}{p}\|h\|_{B^{s+1}_{pp}},
	\qquad h\in \Zcal.
\end{equation}
Since $P_JB_0\in \Zcal$, by Proposition 2.11 in \cite{ADH20} the probability of interest is thus greater than
\begin{align*}
	e^{-\frac{1}{p} \|P_{J}B_0\|^p_{\Zcal}}
	\Pi\big(\|B\|_{H^1}\le m_3\eps_T\big)
	&\ge
		e^{-\frac{1}{p} T\eps_T^2 \|B_0\|^p_{B^{s+1}_{pp}}}
		\Pi\big(\|B\|_{H^1}\le m_3\eps_T\big),
\end{align*}
where $\|B_0\|_{B^{s+1}_{pp}}<\infty$ in view of the continuous embedding $H^{s+1}(\T^d)\subseteq B^{s+1}_{pp}(\T^d)$ holding for all $p\le2$ (p. 33 in \cite{LSS09}). We conclude by estimating the above centred small ball probability. Using Theorem 4.2 in \cite{A07} 
(whose conclusion can readily be adapted for double index sums), we have as $T\to\infty$,
\begin{align*}
	-\log \Pi\big(\|B\|_{H^1}\le m_3\eps_T\big) 
	&= -\log \Pi_W\big(\|W\|_{H^1} \le m_3 (T\eps_T^2)^\frac{1}{p}\eps_T\big)\\
	&\simeq \left[(T\eps_T^2)^\frac{1}{p}\eps_T\right]^{-\frac{d}{s-d/p}}
	= T\eps_T^2.
\end{align*}
Thus, for $c_1>0$ a large enough constant and $C=c_1+\|B_0\|_{B^{s+1}_{pp}}^p/p<\infty$, we obtain as required that $\Pi\left( \|\nabla B-\nabla B_0\|_{L^2(\mu_0)} \le M \eps_T\right) \ge e^{-CT\eps_T^2}$.

	The remaining conditions in Theorem \ref{thm:contraction_b} are verified arguing as in the proof of Theorem \ref{Prop:GaussRates}, using the sets $\mB_T$ in \eqref{B_T set} with $M>0$ large enough and $p\in[1,2]$, taking $\Lambda_T = \{ \mu_B = e^{2B}/\int_{\T^d}e^{2B}: B\in \mB_T\}$, and noting that since $B_0 \in B_{pp}^{s+1}$, we have by Lemma \ref{lem:exp_map} that $\|\mu_0-P_J\mu_0\|_{W^{1,p}} \lesssim 2^{-Js} \|\mu_0\|_{B_{pp}^{s+1}} \lesssim \eps_T$.
\qed

\begin{lemma}\label{Lemma:AnotherSmallBall}
Suppose $B_0 \in H^{s+1}(\T^d) \cap \dot{L}^2(\T^d) $ for $s > d \vee (3/2)$. If $\eps_T = T^{-s/(2s+d)}$, $J \in \mathbb{N}$ satisfies $2^J \simeq T^{1/(2s+d)}$ and $M>0$, then as $T\to\infty$,
\begin{align*}
	P_{B_0} & \left(  \sup_{B \in V_J \cap \dot{L}^2: \|\nabla B - \nabla B_0\|
	_{L^2(\mu_0)} \leq M\eps_T} \frac{1}{T} \int_0^T  
	\|\nabla B(X_s)-\nabla B_0(X_s)\|^2 ds \leq M^2 \eps_T^2 
	+ o(\eps_T^2) \right) \\
	& \qquad \qquad \to 1.
\end{align*}
\end{lemma}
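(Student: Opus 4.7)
The lemma asks that the empirical $L^2$-norm along the trajectory be close to the population $L^2(\mu_0)$-norm with an $o(\eps_T^2)$ remainder, uniformly over a small ball of radius $M\eps_T$. The plan is to reduce this to an empirical-process problem on a finite-dimensional ball in $V_J$ by a wavelet-approximation step, and then to adapt the chaining argument underlying Proposition \ref{prop:sup_prob}. First, set $B_{0,J}:=P_JB_0\in V_J\cap\dot L^2$, $h:=B-B_{0,J}\in V_J\cap\dot L^2$ and $r:=B_{0,J}-B_0$ (deterministic). Standard wavelet approximation gives $\|\nabla r\|_2\lesssim 2^{-Js}\|B_0\|_{H^{s+1}}\simeq \eps_T$ and, under the hypothesis $s>d/2+(d/2+\kappa)\vee 1$ (which implies $s>d/2+1$), also $\|r\|_{W^{2,\infty}}\lesssim 2^{-J(s-d/2-1)}\|B_0\|_{H^{s+1}}=o(1)$. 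Since $\mu_0$ is bounded above and away from zero on $\T^d$ and $h\in\dot L^2$, Poincar\'e yields $\|h\|_2\lesssim\|\nabla h\|_2\lesssim\|\nabla h\|_{L^2(\mu_0)}\lesssim\eps_T$.

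Expanding $\|\nabla B-\nabla B_0\|^2=\|\nabla h\|^2+2\,\nabla h\cdot\nabla r+\|\nabla r\|^2$ and subtracting the analogous decomposition of $\|\nabla B-\nabla B_0\|_{L^2(\mu_0)}^2$ reduces the task to showing
$$
T^{-1/2}\G_T(f_{1,h}),\quad T^{-1/2}\G_T(f_{2,h}),\quad T^{-1/2}\G_T(f_3)\ =\ o_P(\eps_T^2)
$$
uniformly in $h$, where $f_{1,h}=\|\nabla h\|^2-\|\nabla h\|_{L^2(\mu_0)}^2$, $f_{2,h}=\nabla h\cdot\nabla r-\langle\nabla h,\nabla r\rangle_{L^2(\mu_0)}$ and $f_3=\|\nabla r\|^2-\|\nabla r\|_{L^2(\mu_0)}^2$. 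The fixed term $f_3$ is controlled by the single-function exponential inequality for diffusion empirical processes (Lemma 1 of \cite{nicklray2020}), using $\|f_3\|_{H^{-1}}\lesssim\|\nabla r\|_\infty\|\nabla r\|_2\lesssim\eps_T$ (since $\|\nabla r\|_\infty\lesssim\|B_0\|_{C^1}=O(1)$), giving $T^{-1/2}|\G_T(f_3)|=O_P(T^{-1/2}\eps_T)=o(\eps_T^2)$ because $T^{1/2}\eps_T\to\infty$. For the $h$-indexed terms we apply a chaining argument along the lines of the proof of Proposition \ref{prop:sup_prob} to the classes $\{f_{1,h}\},\{f_{2,h}\}$ parametrised by $h$ in the $O(2^{Jd})$-dimensional ball $\{h\in V_J\cap\dot L^2:\|\nabla h\|_2\lesssim\eps_T\}$, with the decisive envelope estimates
$$
\|f_{1,h}\|_{H^{-1}}\lesssim\|\nabla h\|_\infty\|\nabla h\|_2\lesssim 2^{J(d/2+1)}\eps_T^2,\qquad \|f_{2,h}\|_{H^{-1}}\lesssim\|\nabla h\|_{H^{-1}}\|\nabla r\|_{W^{1,\infty}}=o(\eps_T),
$$
obtained from the Bernstein--Nikol'skii inverse bound $\|\nabla h\|_\infty\lesssim 2^{J(d/2+1)}\|h\|_2$ on $V_J$ and from the multiplier inequality $\|fg\|_{H^{-1}}\lesssim\|f\|_{H^{-1}}\|g\|_{W^{1,\infty}}$ together with the bound on $\|r\|_{W^{2,\infty}}$ from the first paragraph. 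Feeding these and the parameter dimension $O(2^{Jd})$ into the subgaussian/subexponential Dirksen chaining bounds \cite{D15} as in Proposition \ref{prop:sup_prob}, a short computation (crucially using $T^{-1/2}2^{Jd/2}\eps_T=\eps_T^2$, so that the $o(1)$ factor in $\|f_{2,h}\|_{H^{-1}}$ converts $\eps_T^2$ to $o(\eps_T^2)$, while $T^{-1/2}2^{J(d+1)}=T^{(d+2-2s)/(2(2s+d))}\to 0$ under $s>d/2+1$ handles $f_{1,h}$) yields $T^{-1/2}\sup_h|\G_T(f_{j,h})|=o_P(\eps_T^2)$ for $j=1,2$.

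\textbf{Main obstacle.} The principal difficulty is that the classes $\{f_{1,h}\},\{f_{2,h}\}$ are \emph{not} contained in $V_J$, since pointwise products of $V_J$-functions leave this space. Proposition \ref{prop:sup_prob} therefore cannot be invoked verbatim, and the underlying martingale plus Dirksen chaining argument must be rerun while bounding $\|f_{j,h}\|_{H^{d/2+\kappa-1}}$ (the subexponential metric entering the second chaining functional) directly via Runst--Sickel multiplier inequalities, rather than by the $V_J$-inverse estimate $\|\cdot\|_{H^t}\lesssim 2^{J(t+1)}\|\cdot\|_{H^{-1}}$ used in the original proof. This is precisely where the stronger hypothesis $s>d/2+(d/2+\kappa)\vee 1$ enters, ensuring the resulting Sobolev-multiplier products are small enough for the chaining increments to sum to $o(\eps_T^2)$.
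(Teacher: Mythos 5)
Your proposal is correct in its essential mathematics and correctly identifies the key difficulty — that squares and products of $V_J$-functions leave $V_J$, so Proposition~\ref{prop:sup_prob} cannot be applied off the shelf — but it takes a noticeably different route from the paper's. The paper does \emph{not} decompose $\|\nabla B-\nabla B_0\|^2$ into $\|\nabla h\|^2+2\nabla h\cdot\nabla r+\|\nabla r\|^2$; it works directly with the single class $\mF_T=\{f_B : B\in\Scal\Bcal_T\cup\{B_0\}\}$ where $f_B=\|\nabla B-\nabla B_0\|^2-\|\nabla B-\nabla B_0\|^2_{L^2(\mu_0)}$, and it handles the fact that $B_0\notin V_J$ implicitly, by inserting $P_JB_0$ via the triangle inequality when bounding the envelope $\|h_i+\bar h_i\|_{H^u}$ and the chaining diameter, rather than by expanding the square. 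Moreover, the paper does not re-run the Dirksen mixed-tail chaining from Proposition~\ref{prop:sup_prob}: it instead invokes Lemma~1 of \cite{nicklray2020} directly, which is a purely subgaussian metric-entropy chaining bound in the $d_L$ metric $d_L^2(f,g)=\sum_i\|\partial_{x_i}L_{B_0}^{-1}[f-g]\|_\infty^2$, together with a single bound on $\sup_f\|L_{B_0}^{-1}[f]\|_\infty$. Your proposal thus reaches for a heavier tool (the $\gamma_1+\gamma_2$ mixed-tail functional) where a lighter one suffices, and then leaves the most delicate part — the subexponential $\gamma_1$ contribution, which you acknowledge must be bounded ``directly via Runst--Sickel'' — entirely sketched. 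That is precisely the step the paper avoids by using the subgaussian chaining of \cite{nicklray2020}; if you want your approach to close, you must carry out the $\gamma_1$ estimate and show that the condition $s>d/2+(d/2+\kappa)\vee 1$ indeed makes it $o(\eps_T^2)$, which is not automatic. Two smaller points: your envelope $\|\nabla h\|_\infty\lesssim 2^{J(d/2+1)}\|h\|_2$ is slightly lossy (the sharper $\|\nabla h\|_\infty\lesssim 2^{Jd/2}\|\nabla h\|_2$ is available for $h\in V_J$ and is what the paper effectively uses via $\|h_i-\bar h_i\|_\infty\lesssim 2^{Jd/2}\|B-\bar B\|_{H^1}$); and for the fixed term $f_3$, the single-function inequality of \cite{nicklray2020} controls $\G_T[f_3]$ through $T^{-1/2}\|L_{B_0}^{-1}[f_3]\|_\infty$ and $d_L(f_3,0)\simeq\|f_3\|_{H^{d/2+\kappa-1}}$, not merely $\|f_3\|_{H^{-1}}$, so the claimed $o(\eps_T^2)$ bound for that term needs the same Runst--Sickel estimate you deploy for $f_{1,h},f_{2,h}$, not just the $\|\nabla r\|_\infty\|\nabla r\|_2$ bound. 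In summary: a viable alternative decomposition, correct in outline, but heavier than necessary and with the genuinely delicate chaining step left unproved.
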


The proof is deferred to Section \ref{ProofOfProof} of the Supplement. The above restriction to $\dot{L}^2$ is for identifiability and simplifies certain norms. A similar result holds under other identifiability constraints.

\subsection{Concentration inequalities and bias bounds}

In this section, we build on techniques for Gaussian process priors \cite{vdvaart2008}, $p$-exponential priors \cite{ADH20} and rescaled priors \cite{MNP20} to obtain the prior bias bounds needed to apply our general contraction theorem for multi-dimensional diffusions. The proofs of the following lemmas are deferred to Section \ref{ProofOfProof} in the Supplement.

\begin{lemma}\label{lem:prior_prob_event}
	For $s,M,\kappa>0$, $p\in[1,2]$ and sequences  $\eps_T=$ $ T^{-s/(2s+d)},\ \overline\eps_T=T^{-(s+1)/(2s+d)}$, define the sets
\begin{equation}\label{B_T set}
	\mB_T 
	=
		\{ B = B_1 + B_2: \|B_1\|_\infty \leq \overline{\eps}_T, 
		\|B_1\|_{C^1} \leq \eps_T,
		\|B_2\|_{B^{s+1}_{pp}} \leq M, 
		\|B\|_{C^{(d/2+\kappa)\vee 2}} \leq M \}.
\end{equation}
Assume either:
\begin{enumerate}
\item[(i)] $p=2$ and $B = W/(\sqrt{T}\eps_T)$ for $W\sim \Pi_W$ a Gaussian process satisfying Condition \ref{GP_condition};

\item[(ii)] $B = W/(T\eps_T^2)^\frac{1}{p}$ for $W\sim\Pi_W$ a $p$-exponential random element as in \eqref{Eq:pExpBasePrior} with $s>(d/2+\kappa)\vee 2+d/p-1$.
\end{enumerate}
Let $\Pi=\Pi_T$ be the law of $B$. Then, for every $K>0$, there exists $M>0$ large enough such that $\Pi(\mB_T^c) \leq e^{-KT\eps_T^2}.$

\end{lemma}

\begin{lemma}\label{lem:exp_map}
Let $1 \leq p,q\leq \infty$ and $t>d/p$. If $\|B\|_\infty \leq m$, then 
$$
	\|e^B\|_{B_{pq}^t} \leq C(1+\|B\|_{B_{pq}^t} + \|B\|_{B_{pq}^t}^t)
$$
for some constant $C=C(m,t,p,q)>0$.

\proof

Consider a function $f \in C^\infty(\R)$ such that $f(x) = e^x-1$ for $|x|\leq m$ and $\|f\|_{L^\infty(\R)} \leq 2m$. Since $\|B\|_\infty \leq m$, we have $f\circ B(x) = e^{B(x)}-1$ for all $x\in \T^d$. By Theorem 11 of Bourdaud and Sickel \cite{bourdaud2010},
$$\|e^B-1\|_{B_{pq}^t} \leq c \|f\|_{C_b^{\lfloor t \rfloor+1}(\R)} (\|B\|_{B_{pq}^t} + \|B\|_{B_{pq}^t}^t)$$
for some $c>0$. Since $\|1\|_{B_{pq}^t}<\infty$ for periodic Besov spaces, the result follows.
\endproof

\end{lemma}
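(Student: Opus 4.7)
The plan is to reduce the claim to a known composition result for Besov spaces on compact manifolds, since $x \mapsto e^x$ is not itself bounded on $\R$ but we only ever evaluate it on the bounded range $[-m,m]$ of $B$. The strategy has three steps: (i) replace $e^x - 1$ by a smooth, globally bounded surrogate $f$; (ii) invoke a Besov composition theorem on $f \circ B$; (iii) add back the constant $1$.

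For step (i), I would fix a cut-off $\chi \in C_c^\infty(\R)$ with $\chi \equiv 1$ on $[-m,m]$ and $\mathrm{supp}\,\chi \subset [-m-1,m+1]$, and set $f(x) := \chi(x)(e^x - 1)$. Then $f \in C_b^\infty(\R)$, all its derivatives are bounded by constants depending only on $m$, and by construction $f(y) = e^y - 1$ for $|y|\le m$. Since $\|B\|_\infty \le m$, this gives the pointwise identity $f \circ B = e^{B} - 1$ on $\T^d$.

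For step (ii), the key input is a composition result for Besov spaces of the type of Theorem~11 in Bourdaud--Sickel~\cite{bourdaud2010}: for $t > d/p$ (which is precisely the regime in which $B^t_{pq}(\T^d)$ is continuously embedded in $L^\infty$, so composition is meaningful), one has
\begin{equation*}
\|f \circ B\|_{B^t_{pq}} \le c\, \|f\|_{C_b^{\lfloor t\rfloor+1}(\R)}\bigl(\|B\|_{B^t_{pq}} + \|B\|_{B^t_{pq}}^{t}\bigr),
\end{equation*}
with $c = c(t,p,q,d)$. Because $\|f\|_{C_b^{\lfloor t\rfloor+1}(\R)}$ is controlled solely in terms of $m$ and $t$, this bound takes the required form (up to the additive ``$1$'' that will come from step (iii)).

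For step (iii), since we are on the compact torus $\T^d$, the constant function $\mathbf 1$ lies in every Besov space $B^t_{pq}(\T^d)$ with a fixed finite norm. Writing $e^B = (e^B - 1) + 1 = f\circ B + 1$ and applying the triangle inequality then yields
\begin{equation*}
\|e^B\|_{B^t_{pq}} \le \|f\circ B\|_{B^t_{pq}} + \|\mathbf 1\|_{B^t_{pq}} \le C\bigl(1 + \|B\|_{B^t_{pq}} + \|B\|_{B^t_{pq}}^{t}\bigr),
\end{equation*}
with $C = C(m,t,p,q)$ as required. The only genuinely non-trivial ingredient is the Bourdaud--Sickel composition theorem; once that is quoted, the rest is just arranging that $f$ is globally smooth and bounded so the theorem is applicable, which is exactly why the truncation in step~(i) is needed.
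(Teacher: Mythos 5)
Your proposal is correct and takes essentially the same route as the paper: replace $e^x - 1$ by a smooth, globally bounded surrogate $f$ agreeing with it on $[-m,m]$, invoke Theorem 11 of Bourdaud--Sickel for $\|f\circ B\|_{B^t_{pq}}$, and absorb the constant $\mathbf 1$ by the triangle inequality. The only cosmetic difference is that you make the construction of $f$ explicit via a cutoff $\chi$, whereas the paper simply asserts the existence of such an $f$ with a uniform bound.
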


\begin{lemma}\label{lem:bias}
For $s,M,\kappa>0$ and $p\in[1,2]$, let $\mB_T$ be the set in \eqref{B_T set}, with $\eps_T, \overline \eps_T$ as in Lemma \ref{lem:prior_prob_event}. If $2^J \simeq T^{1/(2s+d)}$, then there exists a finite constant $C$ depending on $s,d,m$ and the wavelet basis $\{\Phi_{l r}\}$ such that
$$ 
	\left\{ \mu_B = \frac{e^{2B}}{\int_{\T^d} e^{2B(x)}dx}: B \in \mB_T \right\} 
	\subset \{ \mu: \|\mu - P_J\mu\|_{W^{1,p}} \leq C\eps_T\}.
$$

\end{lemma}

\section*{Acknowledgements} 
We would like to thank Andrew Stuart for raising the question that led to this research, Richard Nickl for valuable discussions, and the AE and three referees for many helpful comments that improved the manuscript. M.G.~was supported by the European Research Council under ERC grant agreement No.647812 (UQMSI), and during part of the revision of the manuscript was affiliated with the University of Oxford and supported by the ERC grant agreement No.834275 (GTBB).

\begin{supplement}
\stitle{}
In this supplement, we provide additional background material and all the technical results and proofs not included in the main article. For the convenience of the reader, we repeat here the statements of results in the main article whose proofs appear in this supplement. We linearly continue the equation numbering scheme from the main document to the supplement.

\begin{appendix}

\section{Additional material and technical results}

\subsection{Properties of Gaussian priors and proof of Lemma \ref{Lem:Conjugacy}}

\subsubsection{Periodic Matérn processes}
\label{Sec:PerMatProc}

	The Matérn process on $\R^d$ with smoothness parameter $s+1-d/2>0$ is a stationary Gaussian process with covariance kernel (Example 11.8 in \cite{ghosal2017})
\begin{equation}
\label{Eq:MaternKernel}
	K(x,y) \equiv K(x-y) = \int_{\R^d} e^{-i (x-y).\xi}(1+\|\xi\|^2)^{-s-1}
	d\xi,\qquad x,y\in \R^d.
\end{equation}
Using a standard approach to periodization \cite{RW06}, one can construct a corresponding stationary periodic kernel via the Poisson summation formula:
$$
	K_{\textnormal{per}}(x,y)
	\equiv K_{\textnormal{per}}(x-y) =  \sum_{m\in\Z^d} K(x-y+m), 
	\qquad x,y\in\R^d.
$$
Since the Poisson summation formula preserves the Fourier transform (Theorem 8.31 of \cite{F99}), $K_{\textnormal{per}}$ has Fourier coefficients
$$
	\int_{\T^d} e^{-2\pi i k.u}K_{\textnormal{per}}(u)du
	= \int_{\R^d} e^{-2\pi i k.u}K(u)du
	= \frac{(2\pi)^d}{(1+4\pi^2 \|k\|^2)^{s+1}}, \qquad k\in\Z^d,
$$
the last equality following from \eqref{Eq:MaternKernel} and the Fourier inversion formula (e.g., Section 8.3 in \cite{F99}). Thus, using the Fourier series of $K_{\textnormal{per}}(u)$, the periodized Matérn kernel has series representation 
$$
	K_{\textnormal{per}}(x,y) 
	= (2\pi)^d \sum_{k\in\Z^d} \frac{1}{(1+4\pi^2 \| k\|^2)^{s+1}} e_k(x)
	\overline{e_k(y)},
$$
where $\{e_k, \ k\in\Z^d\}$ is the Fourier basis of $L^2(\T^d)$, i.e.~$e_k(x) = e^{2\pi i  k . x}$. Theorem I.21 in \cite{ghosal2017} then implies that the centred Gaussian process $W=\{W(x): x\in\T^d\}$ with covariance kernel $K_{\textnormal{per}}$ has RKHS
$$
	\H = \left\{ h = \sum_{k\in\Z^d} h_k e_k :\ \|h\|^2_\H =
	(2\pi)^d\sum_{k\in\Z^d} h_k^2 (1+4\pi^2 \|k\|^2)^{s+1}<\infty\right\}.
$$
By the Fourier series characterisation of Sobolev spaces, we thus have $\H = H^{s+1}(\T^d)$ and $\|\cdot\|_\H$ is an equivalent norm to $\|\cdot\|_{H^{s+1}}$. The above computation also implies that the periodic Mat\'ern process has Karhunen-Loève series expansion given by \eqref{Eq:MaternKLSeries}, equivalent (up to constants) to the mean-zero Gaussian processes with inverse Laplacian covariance appearing in \cite{PPRS12,PSvZ13,vWvZ16,vW2019}.

	For the regularity of the sample paths, by Proposition I.4 in \cite{ghosal2017}, which applies also to the periodized kernel $K_{\textnormal{per}}$, the periodic Matérn process has a version $W$ whose sample paths are in $C^{s+1-\frac{d}{2}-\eta}(\T^d)$ for any $\eta>0$. By Lemma I.7 in \cite{ghosal2017}, this version then defines a Borel random element in $C^{r}( \T^d)$ for all $r<s+1-d/2$. In particular, $C^r(\T^d)$ is a separable linear subspace of $C^{(d/2+\kappa)\vee 2}( \T^d)$ for some $\kappa>0$ provided that $s+1>d/2+(d/2)\vee 2$ and $r>(d/2+\kappa)\vee 2$. In conclusion, the periodic Mat\'ern process satisfies Condition \ref{GP_condition} for $s+1>d/2+(d/2)\vee 2$.

\subsubsection{Gaussian conjugacy formulae and proof of Lemma \ref{Lem:Conjugacy}}
\label{Sec:Conjugacy}

We first provide details for the derivation of the conjugate formula \eqref{Eq:TruncatedPosterior}. For basis functions $(h_k, \ k\in\N)\subset C^2\cap \dot L^2$, and fixed $K\in\N$, identifying a function $B = \sum_{k=1}^K B_k h_k$ with its coefficient vector $\mathbf B = (B_1,\dots,B_K)^T\in\R^K$, we can write the log-likelihood \eqref{Eq:LogLik} in quadratic form as
\begin{align*}
	\ell_T(\mathbf B ) 
	&=
		-\frac{1}{2} \int_0^T \sum_{j=1}^d\sum_{k,k'} B_k B_{k'} \partial_{x_j} h_k
		(X_t) \partial_{x_j} h_{k'}(X_t) dt
		+ \sum_{j=1}^d \int_0^T \sum_{k=1}^K B_k \partial_{x_j}h_k
		(X_t) dX_t^j\\
	&= 
		-\frac{1}{2}\sum_{k,k'}B_k B_{k'} \left[\int_0^T \nabla h_k
		(X_t). \nabla h_{k'}(X_t) dt	\right]
		+ \sum_{k=1}^K B_k \left[\int_0^T \nabla h_k
		(X_t). dX_t\right]\\
	&=
		-\frac{1}{2} \mathbf B^T \Sigma \mathbf B
		+  \mathbf B^T \mathbf H,
\end{align*}
for $\Sigma$ and $\mathbf H$ defined as after \eqref{Eq:TruncatedPosterior}. Under the same identification, we may regard the prior \eqref{Eq:TruncatedKL} as a multivariate normal distribution on $\R^K$ with diagonal covariance matrix $\Upsilon = \textnormal{diag}(\upsilon_1^2,\dots,\upsilon_K^2)$. Then, by Bayes' formula, the posterior density is given by
\begin{align*}
	d\Pi(\mathbf B|X^T) 
	&\propto 
		e^{\ell_T(\mathbf B)}
		e^{- \frac{1}{2}\mathbf B^T \Upsilon^{-1} \mathbf B}
	=
		e^{-\frac{1}{2} \mathbf B^T \Sigma \mathbf B
		+  \mathbf B^T \mathbf H - \frac{1}{2} \mathbf B^T \Upsilon^{-1}
		\mathbf B}.
\end{align*}
Completing the squares then gives
\begin{align*}
	d\Pi(\mathbf B|X^T) 
	& \propto
		e^{-\frac{1}{2} [\mathbf B-(\Sigma+\Upsilon^{-1})^{-1}
		\mathbf H]^T(\Sigma+\Upsilon^{-1}) [\mathbf B 
		-(\Sigma+\Upsilon^{-1})^{-1}\mathbf H]},
\end{align*}
which completes the derivation of \eqref{Eq:TruncatedPosterior}. We next provide the proof of Lemma \ref{Lem:Conjugacy}.

\begin{lemmaConj}
	Let $\Pi$ be a centred Gaussian Borel probability measure on $L^2(\T^d)$ that is supported on $C^2(\T^d)\cap \dot L^2(\T^d)$. Then the posterior distribution \eqref{Eq:Posterior} is almost surely (under the law of the data $X^T$) Gaussian on  $L^2(\T^d)$.
\end{lemmaConj}

\begin{proof}
We follow ideas in \cite{PSvZ13}. For $K\in\N$, let $P_K: L^2\to L^2$ be the projection onto the Fourier approximation space $E_K = \textnormal{span}\{e_k, \ |k|\le K\}$, $e_k(x) = e^{2\pi i k.x}$. Consider the approximate posterior
$$
	\Pi_K(A|X^T) = 
	\frac{\int_A e^{\ell_T(P_KB)}d\Pi(B)}{\int_{C^2} e^{\ell_T(P_KB')}d\Pi(B')},
	\qquad A\subset C^2\cap \dot L^2 \ \textnormal{measurable}.
$$
Since $\ell_T(P_K B)$ only depends on $P_KB$, $\Pi_K(\cdot|X^T)$ can be decomposed as the product of a Gaussian measure on $E_K$ (satisfying the conjugate formula \eqref{Eq:TruncatedPosterior}) and a Gaussian measure obtained as the push-forward of $\Pi$ under the projection operator onto the orthogonal complement of $E_K$ in $L^2$. It follows that $\Pi_K(\cdot|X^T)$ is Gaussian in $L^2$. We now show that, a.s.~under the law of the data $X^T$, $\Pi_K(\cdot|X^T) \to \Pi(\cdot|X^T)$ weakly in $L^2$ as $K\to\infty$, which in turn will conclude the proof since the class of Gaussian measures is closed with respect to weak convergence.

	For $B\in C^2$, we have $\|B -P_K B\|_{H^2}\to 0$ as $K\to\infty$. It follows that $e^{\ell_T(P_KB)}\to e^{\ell_T(B)}$ a.s., since, given the data $X^T$, the function $B\mapsto \ell_T(B)$ in \eqref{Eq:LogLik} is continuous with respect to the $H^2$-norm. We next show that the limiting function $e^{\ell_T(B)}, \ B\in C^2,$ is dominated by a $\Pi$-integrable function. Under $P_{B_0}$, using It\^o's lemma (Theorem 39.3 of \cite{B11}),
\begin{align*}
	\ell_T(B) 
	&\le 
		\int_0^T \nabla B(X_t).dX_t\\
	&=
	B(X_T)- B(X_0) - \frac{1}{2}\int_0^T \Delta B(X_t)dt  \\
	& \le 
		2\|B\|_\infty + \frac{1}{2}\sum_{j=1}^d
		\int_0^T \Big| \frac{\partial^2}
		{\partial x_j^2}B(X_t)\Big|dt,
\end{align*}
and using Young's inequality with $\varepsilon$ and $p=q=2$, we obtain that for all $\eta>0$,
\begin{align*}
	\ell_T(B)
	&\le
		\frac{2}{\eta} + \frac{\eta}{2}\|B\|_\infty^2 + 
		\frac{1}{2}\sum_{j=1}^d\int_0^T \left[
		\frac{1}{2\eta}
		+\frac{\eta}{2}\Big|\frac{\partial^2}
		{\partial x_j^2}B(X_t)\Big|^2
		 \right]dt	\\
	&\le 
		\frac{2}{\eta} + \frac{\eta \|B\|_\infty^2}{2} +
		\frac{dT}{4\eta} + \frac{dT\eta}{4}\|B\|_{C^2}^2
	=
		\frac{dT+8}{4\eta}
		+\frac{\eta(dT+2)}{4}\| B\|_{C^2}^2.
\end{align*}
In conclusion, for all $\eta>0$,
$$
	e^{\ell_T(B)}\le e^{(dT+8)/(4\eta)} 
	e^{\eta(dT+2)\| B\|_{C^2}^2/4}.
$$
Taking $\eta>0$ small enough, since $\Pi$ can be regarded as a Gaussian measure on $C^2$, Fernique's theorem \cite[Theorem 2.8.5]{B98} implies that the right hand side in the last display is $\Pi$-integrable. By the dominated convergence theorem, we then conclude that a.s.~under the law of the data $X^T$, as $K\to\infty$,
$$
	\int_{C^2} e^{\ell_T(P_KB')}d\Pi(B') 
	\to \int_{C^2}  e^{\ell_T(B')}d\Pi(B'),
$$
and likewise, for all measurable $A\subset C^2\cap \dot L^2$, by boundedness of the indicator function $1_A(\cdot)$,
$$
	\int_A e^{\ell_T(P_KB)}d\Pi(B)
	=
		\int_{C^2}  e^{\ell_T(P_KB)}1_A(B) d\Pi(B)
	\to
	 \int_A e^{\ell_T(B)}d\Pi(B),
$$	
showing as required that $\Pi_K(\cdot|X^T) \to \Pi(\cdot|X^T)$ weakly in $C^2$ (and also in $L^2$). 
\end{proof}

\subsection{Proof of Theorem \ref{thm_contraction_tests}}
\label{Sec:ProofTestTheo}

\begin{theoConv}
	Let $d_T$ be a semimetric on the parameter space $\mathcal{H} \subseteq C^1(\T^d)$ for the drift $b$ and let $\Pi = \Pi_T$ be priors for $b$. Let $\eps_T\to 0$ satisfy $\sqrt{T}\eps_T \to \infty$, let $\xi_T \to 0$, $\mathcal{H}_T \subseteq \mathcal{H}$ and let $\phi_T$ be a sequence of test functions satisfying
\begin{align*}
	P_{b_0} \phi_T \to 0, \qquad \qquad 
	\sup_{b \in \mathcal{H}_T :  d_T(b,b_0) \geq D \xi_T} P_b (1-\phi_T) \leq Le^{-(C+4)T\eps_T^2}
\end{align*}
for some $C,D,L>0$ with $\Pi(\mathcal{H}_T^c) \leq e^{-(C+4)T\eps_T^2}$. Suppose further that there exist deterministic sets $\mathcal{SB}_T \subseteq \mathcal{H}$ with $\Pi(\mathcal{SB}_T) \geq e^{-CT\eps_T^2}$ and
$$P_{b_0} \left(\sup_{b\in \mathcal{SB}_T}  \int_0^T \|b(X_s)-b_0(X_s)\|^2 ds \leq T \eps_T^2 \right) \to 1,$$
where $b_0$ is the true drift function. Then for $M>0$ large enough, as $T\to\infty$,
$$
	P_{b_0} \Pi (b:d_T(b,b_0) \geq M\xi_T|X^T) \to 0.
$$
\end{theoConv}

\begin{proof}
The proof follows by the standard arguments for test-based contraction rates, see e.g.~Theorem 2.1 of \cite{ghosal2000}, with the only difference being how we control the denominator in the Bayes formula. We now detail this argument, which is a modification of Lemma 4.2 of van der Meulen et al. \cite{vdMeulen2006} adapted to our setting. By \eqref{Eq:LogLik}, the log-likelihood ratio equals (replacing $\nabla B$ with $b$), under $P_{b_0}$,
$$
	\log \frac{dP_b}{dP_{b_0}}(X^T) 
	= \int_0^T [b(X_s) - b_0(X_s)].dW_s - \tfrac{1}{2}\int_0^T \|b(X_s)
	-b_0(X_s)\|^2 ds =: M_T^b - \tfrac{1}{2} [M^b]_T.
$$
Set $\overline{\Pi} = \Pi(\cdot \cap \mathcal{SB}_T) /\Pi(\mathcal{SB}_T)$ to be the normalized prior restricted to the set $\mathcal{SB}_T$. Since $P_{b_0} (\sup_{b \in \mathcal{SB}_T} [M^b]_T \leq T\eps_T^2) \to 1$ by assumption, we henceforth work on this event. By Jensen's inequality,
\begin{align*}
	\log \int \frac{dP_b}{dP_{b_0}}(X_T) \frac{d\Pi(b)}{\Pi(\mathcal{SB}_T)} 
	& \geq \int_{\mathcal{SB}_T} \log \frac{dP_b}{dP_{b_0}}(X_T) d\overline{\Pi}(b) 
	= \int_{\mathcal{SB}_T} M_T^b - \tfrac{1}{2}[M^b]_T d\overline{\Pi}(b).
\end{align*}
For fixed $T>0$, define
$$Z_t^T = \int_{\mathcal{SB}_T} M_t^b d\overline{\Pi}(b) = \sum_{i=1}^d \int_0^t  \int_{\mathcal{SB}_T} b_i(X_s)-b_{0,i}(X_s) d\overline{\Pi}(b) dW_s^i,$$
where the last equality follows from stochastic Fubini's theorem (Theorem 64 of Chapter IV in \cite{protter2004}) since $\mathcal{SB}_T$ is deterministic. The process $(Z_t^T:t \geq 0)$ is a continuous local martingale under $P_{b_0}$ with quadratic variation
\begin{align*}
[Z^T]_t & = \int_0^t \sum_{i=1}^d \left( \int_{\mathcal{SB}_T} b_i(X_s)-b_{0,i}(X_s) d\overline{\Pi}(b) \right)^2 ds.
\end{align*}
Using Jensen's inequality, Fubini's theorem and the definition of $\mathcal{SB}_T$,
\begin{align*}
[Z^T]_T & \leq \int_0^T \sum_{i=1}^d \int_{\mathcal{SB}_T} (b_i(X_s)-b_{0,i}(X_s) )^2 d\overline{\Pi}(b) \, ds \\
& = \int_{\mathcal{SB}_T} \int_0^T  \|b(X_s)-b_{0}(X_s) \|^2 ds \, d\overline{\Pi}(b) = \int_{\mathcal{SB}_T} [M^b]_T d\overline{\Pi}(b)
\leq T\eps_T^2.
\end{align*}
By Bernstein's inequality \eqref{Bernstein}, for any $x>0$,
\begin{align*}
P_{b_0} \left( |Z_T^T| \geq x \right) & = P_{b_0} \left( |Z_T^T| \geq x,[Z^T]_T \leq T\eps_T^2 \right)  \leq 2\exp \left(-\frac{x^2}{2T\eps_T^2} \right).
\end{align*}
Setting $x=LT\eps_T^2$ gives $P_{b_0}(|Z_T^T|\geq LT\eps_T^2) \to 0$ for any $L>0$. On the event $\{|Z_T^T|\leq LT\eps_T^2\}$, which has $P_{b_0}$-probability tending to one, and using the second to last display,
\begin{align*}
\int_{\mathcal{SB}_T} M_T^b - \tfrac{1}{2}[M^b]_T d\overline{\Pi}(b) & = Z_T^T-  \tfrac{1}{2} \int_{\mathcal{SB}_T} [M^b]_T d\overline{\Pi}(b) \geq -(L+1/2) T\eps_T^2.
\end{align*}
In conclusion, we have shown $P_{b_0}( \int dP_b/dP_{b_0} d\Pi(b) \geq e^{-(L+1/2)T\eps_T^2} \Pi(\mathcal{SB}_T)) \to 1$ for any $L>0$. Substituting in $\Pi(\mathcal{SB}_T) \geq e^{-CT\eps_T^2}$ gives $P_{b_0}( \int dP_b/dP_{b_0} d\Pi(b) \geq e^{-(C+L+1/2)T\eps_T^2}) \to 1$ for any $L>0$.
\end{proof}

\subsection{Proofs of convergence rates for posterior means}

\begin{CorollaryPostMean1}
Let $\hat{B}_T =E^{\Pi_T}[B|X^T]$ be the posterior mean/MAP estimate. Under the conditions of Theorem \ref{Prop:GaussRates}, as $T \to \infty$,
$$\|\nabla \hat{B}_T - \nabla B_0\|_2 = O_{P_{B_0}}(T^{-s/(2s+d)} ).$$ 
\end{CorollaryPostMean1}

\begin{proof}
By Lemma \ref{Lem:Conjugacy}, the posterior $\Pi(\cdot|X^T)$ of $B$ is Gaussian, $P_{B_0}$-almost surely. Since the set $\{B \in \dot{L}^2: \|\nabla B\|_2 \leq MT^{-s/(2s+d)} \}$ is closed, convex and symmetric, Theorem \ref{Prop:GaussRates} and then Anderson's inequality imply that
\begin{align*}
3/4 & < \Pi(B:\|\nabla B-\nabla B_0\|_2 \leq MT^{-s/(2s+d)} |X^T) \\
& \leq \Pi(B:\|\nabla B-\nabla \hat{B}_T\|_2 \leq MT^{-s/(2s+d)} |X^T)
\end{align*}
for $M,T>0$ large enough with $P_{B_0}$-probability tending to one. In particular, the posterior probability of the intersection of the above two sets is at least $1/2$, and so there exists $B \in \dot{L}^2$ in both sets. For such a $B$,
$$\|\nabla \hat{B}_T - \nabla B_0\|_2 \leq \|\nabla \hat{B}_T - \nabla B\|_2 + \|\nabla B - \nabla B_0\|_2 \leq 2MT^{-s/(2s+d)},$$
which thus holds with $P_{B_0}$-probability tending to one.
\end{proof}

	The proof for (the non-conjugate) $p$-exponential priors follows from uniform integrability arguments developed in \cite{MNP20}.

\begin{CorollaryPostMean2}
Let $\hat{B}_T =E^{\Pi_T}[B|X^T]$ be the posterior mean. Under the conditions of Theorem \ref{Prop:PExpRates}, as $T \to \infty$,
$$\|\nabla \hat{B}_T - \nabla B_0\|_p = O_{P_{B_0}}(T^{-s/(2s+d)} ).$$ 
\end{CorollaryPostMean2}

\begin{proof}

Inspection of the proof of Theorem \ref{thm_contraction_tests} shows that the assertion of Theorem \ref{Prop:PExpRates} can be strengthened as follows: under the same assumptions, for any $A>0$ we can find large enough $M>0$ such that for $\eps_T = T^{-s/(2s+d)}$,
\begin{equation}
\label{Eq:StrongThesis}
	P_{B_0}\left(\Pi( B : \|\nabla B - \nabla B_0\|_{p} > M\eps_T| X^T) 
	> e^{-AT\eps_T^2}\right) \to 0
\end{equation}
as $T\to\infty$. 
Using Jensen's inequality,
\begin{align*}
	\| \nabla \hat B_T - \nabla B_0\|_{p}
	&\lesssim
		\| E^\Pi[B|X^T] -  B_0\|_{W^{1,p}}
	\le
		E^\Pi\left[ \| B - B_0\|_{W^{1,p}}|X^T\right].
\end{align*}
For any $M>0$, the last quantity is bounded by
\begin{align*}
	&
		M\eps_T +
		E^\Pi\left[ \| B - B_0\|_{W^{1,p}}1_{\{\| B - B_0\|_{W^{1,p}}>M\eps_T\}}
		\big|X^T\right] \\
	&\quad \le		
			M\eps_T + 	E^\Pi\left[ \| B - B_0\|_{W^{1,p}}^p|X^T\right]^{1/p}
		\Pi(B : \| B - B_0\|_{W^{1,p}}>M\eps_T|X^T)^{1/q},
\end{align*}
where we have used H\"older's inequality with $1/p+1/q=1$.
Under the identifiability assumption $B,B' \in \dot L^2(\T^d)$, Poincaré's inequality implies that the norm $\|\nabla B - \nabla B'\|_p$ is equivalent to the Sobolev norm $\|B - B'\|_{W^{1,p}}$. Using the above and \eqref{Eq:StrongThesis}, for any $A>0$, we can find $M>0$ large enough such that
\begin{align*}
	& P_{B_0} (\| \nabla \hat B_T - \nabla B_0\|_p> 2M\eps_T) \\
	& \leq P_{B_0} \left( E^\Pi\left[ \| B - B_0\|_{W^{1,p}}^p|X^T\right]^{1/p}
		\Pi(B : \| B - B_0\|_{W^{1,p}}>M\eps_T|X^T)^{1/q} > M\eps_T\right)\\
	& \leq P_{B_0} \left( E^\Pi\left[ \| B - B_0\|_{W^{1,p}}^p|X^T\right]
		e^{-pAT\eps_T^2/q} > M^p \eps_T^p \right) + o(1).
\end{align*}
For $C>0$ the constant in the small ball estimate \eqref{Eq:NthSmallBall}, define the event
$$
	A_T = \left\{ \int_{C^2\cap \dot L^2}
	e^{\ell_T(B)-\ell_T(B_0)}d\Pi(B) \geq e^{-(C+1)T\eps_T^2} \right\},
$$
which is shown to satisfy $P_{B_0}(A_T)\to 1$ as $T\to\infty$ in the proof of Theorem \ref{thm_contraction_tests}. The second last display is then upper bounded by
\begin{align*}
	P_{B_0}&\left( \left\{ \frac{\int_{C^2\cap \dot L^2}\| B - B_0\|^p_{W^{1,p}}
	e^{\ell_T(B)-\ell_T(B_0)}d\Pi(B)} 
	{\int_{C^2\cap \dot L^2}
	e^{\ell_T(B)-\ell_T(B_0)}d\Pi(B)}
	>M^p \eps_T^p e^{pAT\eps_T^2/q} \right\} \cap  A_T\right) + o(1)\\
	&\le
		P_{B_0}\left(\int_{C^2\cap \dot L^2}\| B - B_0\|^p_{W^{1,p}}
		e^{\ell_T(B)-\ell_T(B_0)}d\Pi(B)
		>M^p \eps_T^p e^{(pA/q-C-1)T\eps_T^2}\right) + o(1).
\end{align*}
By Markov's inequality and Fubini's theorem, the latter probability is smaller than
\begin{align*}
		& \frac{e^{-(pA/q-C-1)T\eps_T^2}}{M^p \eps_T^p} 
		 \int_{C^2\cap \dot L^2}\| B - B_0\|^p_{W^{1,p}}
		E_{B_0}  \left[ e^{\ell_T(B)-\ell_T(B_0)} \right] d\Pi(B) \\
		&\qquad =
		  \frac{e^{-(pA/q-C-1)T\eps_T^2}}{M^p \eps_T^p} E^\Pi \| B - B_0\|^p_{W^{1,p}},
\end{align*}
since $E_{B_0}\left[e^{\ell_T(B)-\ell_T(B_0)}\right]=E_{B}[1]=1$. Using the definition \eqref{Eq:pExpPrior} of the $p$-exponential prior $\Pi$, the continuous embedding $B_{p1}^1 \subset W^{1,p}$ and that $|\sum_{i=1}^m a_i|^p \leq m^{p-1} \sum_{i=1}^m |a_i|^p$,
\begin{align*}
	E^\Pi \|B\|_{W^{1,p}}^p
	\lesssim
	E^\Pi \|B\|_{B^1_{p1}}^p 
	& = T^{-\frac{d}{2s+d}}  E \left[ \sum_{l=0}^J 
	2^{l(1+\frac{d}{2}-\frac{d}{p})}
	\left( \sum_{r} 2^{-pl(s+1+\frac{d}{2}-\frac{d}{p})}|\rho_{lr}|^p \right)^\frac{1}{p} \right]^p \\
	& \leq T^{-\frac{d}{2s+d}} (J+1)^{p-1}  \sum_{l=0}^J 
	2^{pl(1+\frac{d}{2}-\frac{d}{p})} \sum_{r} 2^{-pl(s+1+\frac{d}{2}-\frac{d}{p})} E|\rho_{lr}|^p  \\
	& \lesssim T^{-\frac{d}{2s+d}} J^{p-1}  \sum_{l=0}^J 2^{-l(ps-d)} \lesssim T^{-\frac{d}{2s+d}} J^{p-1}  \lesssim 1,
\end{align*}
since $2^J \simeq T^{1/(2s+d)}$ and $s > d/p + (d/2)\vee2 > d/p$ by assumption. Combining the last three displays then yields that for any $A>0$, we can find $M>0$ large enough such that  
\begin{align*}
P_{B_0} (\| \nabla \hat B_T - \nabla B_0\|_p> 2M\eps_T) \lesssim \frac{e^{-(pA/q-C-1)T\eps_T^2}}{M^p \eps_T^p} + o(1)
\end{align*}
since $B_0 \in W^{1,p}$. Taking $A>(C+1)q/p$ and using that $\eps_T = T^{-s/(2s+d)}$, the right-hand side tends to zero as $T \to\infty$.
\end{proof}


\subsection{A PDE estimate}

In the proofs we used a regularity estimate for solutions to the Poisson equation $L_b u = f$, where the generator $L_b$ of the diffusion is the strongly elliptic second order partial differential operator given in \eqref{Eq:generator}. For some basic facts about this elliptic PDE, see Section 6 in \cite{nicklray2020}, while for more general theory for periodic elliptic PDEs, see Chapter II.3 in \cite{BJS64}. The following estimate is only proved for $t\geq 2$ in Lemma 11 of \cite{nicklray2020}, but the proof can be extended to general $t\in \R$.

\begin{lemma}\label{regest}
Let $t \in \R$ and assume $b \in C^{|t-2|}(\T^d)$. For any $f \in \dot{L}^2_{\mu_b}(\T^d)$, there exists a unique solution $L_b^{-1}[f] \in  L^2(\T^d)$ of the equation
$$L_b u=f,\qquad f\in L^2(\T^d),$$
satisfying $L_b L_b^{-1}[f]=f$ almost everywhere and $\int_{\T^d} L_b^{-1}[f](x) dx = 0$. Moreover, $$\|L_b^{-1}[f]\|_{H^t} \lesssim \|f\|_{H^{t-2}},$$
with constants depending on $t,d$ and on an upper bound for $\|b\|_{B^{|t-2|}_{\infty \infty}}$.

\proof 

Recall the multiplication inequality for Besov-Sobolev norms with $s\geq 0$ (\cite{T10}, p. 143),
\begin{equation*}
\|fg\|_{B_{pq}^s} \leq c(s,p,q,d) \|f\|_{B_{pq}^s} \|g\|_{B_{\infty\infty}^s} \leq c'(s,p,q,d) \|f\|_{B_{pq}^s} \|g\|_{C^s},
\end{equation*}
which by duality implies that for $s\geq 0$,
\begin{equation*}
\|fg\|_{B_{pq}^{-s}} = \sup_{\|\varphi\|_{B_{p'q'}^s} \leq 1} \left| \int_{\T^d} f\frac{g\varphi}{\|g\varphi\|_{B_{p'q'}^s}} \right| \|g\varphi\|_{B_{p'q'}^s} \leq c(s,p,q,d) \|f\|_{B_{pq}^{-s}} \|g\|_{B_{\infty\infty}^s},
\end{equation*}
where $1/p+1/p'=1/q + 1/q' =1$. Combining the last displays yields the multiplier inequality
\begin{equation}\label{besov_mult}
\|fg\|_{B_{pq}^s} \leq c(|s|,p,q,d) \|f\|_{B_{pq}^s} \|g\|_{B_{\infty\infty}^{|s|}}, ~~~ s\in \R.
\end{equation}
The proof of the result then follows as in Lemma 11 of \cite{nicklray2020} upon using the multiplier inequality \eqref{besov_mult} instead of (3) when establishing (69) in that paper. We omit the details here.
\endproof 

\end{lemma}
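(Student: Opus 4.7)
The plan is to re-run the proof of Lemma 11 in \cite{nicklray2020} (which handles $t \geq 2$) with the H\"older multiplier estimate used there replaced by the Besov multiplier inequality \eqref{besov_mult}, whose validity at all real exponents allows the argument to extend to arbitrary $t\in\R$. Existence of a unique mean-zero solution $u=L_b^{-1}[f]\in H^2$ for $f\in L^2_{\mu_b}$ follows from classical Fredholm theory for uniformly strongly elliptic periodic operators (Chapter II.3 of \cite{BJS64}), since $\ker L_b=\mathrm{span}\{1\}$ and $\ker L_b^*=\mathrm{span}\{\mu_b\}$, and the base $H^2$ estimate $\|u\|_{H^2}\lesssim \|f\|_{L^2}$ is the instance $t=2$ of \cite[Lemma 11]{nicklray2020}.

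For $t>2$, I would iterate the elliptic identity $\tfrac12\Delta u= f - b\cdot\nabla u$. On mean-zero functions one has $\|u\|_{H^t}\simeq\|\Delta u\|_{H^{t-2}}$ by Fourier series, and \eqref{besov_mult} with $s=t-2\geq 0$ yields
$$\|u\|_{H^t}\lesssim \|f\|_{H^{t-2}} + \|b\|_{B^{|t-2|}_{\infty\infty}}\|u\|_{H^{t-1}}.$$
Induction on $t$ combined with the interpolation inequality $\|u\|_{H^{t-1}}\leq \eta\|u\|_{H^t}+C_\eta\|u\|_{H^2}$ (to absorb the last term into the left-hand side) delivers the estimate for all $t\geq 2$. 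For $t\leq 0$ the same bootstrap does \emph{not} close, and I would instead argue by duality. Let $L_b^*=\tfrac12\Delta - b\cdot\nabla-\mathrm{div}(b)$ be the formal $L^2$-adjoint, a strongly elliptic periodic operator of the same type. By applying the $t\geq 2$ result already proved to $L_b^*$ (with order $2-t\geq 2$), any mean-zero $\varphi\in C^\infty(\T^d)$ can be written as $\varphi = L_b^*\psi$ with $\psi\in\dot{L}^2\cap H^{2-t}$ and $\|\psi\|_{H^{2-t}}\lesssim\|\varphi\|_{H^{-t}}$. Integration by parts gives
$$\langle u,\varphi\rangle_2 = \langle L_bu,\psi\rangle_2 = \langle f,\psi\rangle_2\leq \|f\|_{H^{t-2}}\|\psi\|_{H^{2-t}}\lesssim \|f\|_{H^{t-2}}\|\varphi\|_{H^{-t}},$$
and taking the supremum over $\varphi$ yields $\|u\|_{H^t}\lesssim\|f\|_{H^{t-2}}$. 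The intermediate range $t\in(0,2)$ then follows by real interpolation between the $t=0$ and $t=2$ bounds.

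The main obstacle is careful handling of the mean-value compatibility conditions in the duality step, since $L_b$ and $L_b^*$ have distinct (non-orthogonal) kernels and cokernels: one must work on the orthogonal complement of $\mathrm{span}\{\mu_b\}$ for $L_b^*$ to be invertible, and reconcile this with the mean-zero constraint on $u$. A secondary technical point is ensuring that the adjoint zero-order term $\mathrm{div}(b)$, which has regularity one order lower than $b$, can still be absorbed into the Besov multiplier constants with the claimed dependence on $\|b\|_{B^{|t-2|}_{\infty\infty}}$; this follows from the duality form of \eqref{besov_mult} given in the excerpt.
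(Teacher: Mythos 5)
Your three-part strategy (bootstrap for $t>2$, duality via $L_b^*$ for $t\leq 0$, interpolation for $t\in(0,2)$) is a reasonable way to organize the argument, and you correctly identify that the plain bootstrap of Lemma 11 in \cite{nicklray2020} does \emph{not} close for $t<2$ — iterating $\|u\|_{H^t}\lesssim\|f\|_{H^{t-2}}+\|u\|_{H^{t-1}}$ with interpolation only reaches a $\|u\|_{H^2}\lesssim\|f\|_{L^2}$ anchor, and for $t<2$ the $L^2$ norm of $f$ is not controlled by $\|f\|_{H^{t-2}}$. You also handle the adjoint correctly: the zero-order term $\mathrm{div}(b)$ costs one extra derivative of $b$, but since the relevant multiplier is applied at order $-t$ while the lemma permits dependence on $\|b\|_{B^{2-t}_{\infty\infty}}$, the required $\|\mathrm{div}(b)\|_{B^{-t}_{\infty\infty}}\lesssim\|b\|_{B^{1-t}_{\infty\infty}}\leq\|b\|_{B^{2-t}_{\infty\infty}}$ indeed goes through, as you claim.

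The genuine gap is the intermediate range $t\in(0,2)$. Interpolating the operator $L_b^{-1}$ between the $t=0$ and $t=2$ estimates requires the \emph{fixed} vector field $b$ to satisfy both endpoint hypotheses, and the $t=0$ endpoint needs $b\in B^2_{\infty\infty}$. The interpolated constant therefore depends on $\|b\|_{B^2_{\infty\infty}}$, not on $\|b\|_{B^{2-t}_{\infty\infty}}$ as the lemma asserts; there is no way to interpolate \emph{toward} lower smoothness requirements on $b$ from endpoints that each demand strictly more. This is not a cosmetic quibble: the lemma is invoked in the paper at indices $t$ in $(0,2)$ for low dimensions $d$ and with $b\in C^{(d/2+\kappa)\vee 1}$, which for $d\leq 3$ and small $\kappa$ does \emph{not} give $b\in B^2_{\infty\infty}$. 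A correct treatment of the range $t\in(0,2)$ with the precise $b$-dependence has to argue at a single scale $t$ rather than by interpolation — for instance, the compactness/Fredholm route (show $b\cdot\nabla:H^t\to H^{t-2}$ is bounded by $\|b\|_{B^{|t-2|}_{\infty\infty}}$ via \eqref{besov_mult}, conclude $L_b:H^t\cap\dot L^2\to H^{t-2}\cap\{\langle\cdot,\mu_b\rangle=0\}$ is Fredholm of index $0$ and injective, then extract the bound by contradiction on a bounded sequence of $b$). That is closer to what the reference to Lemma 11 of \cite{nicklray2020} is pointing at, whereas your constructive bootstrap-plus-duality is a different route which, as written, loses the claimed $b$-regularity in the middle range.
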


\subsection{The Gaussian correlation inequality}

	The Gaussian correlation inequality states that for any closed, symmetric, convex sets $K, L$ in $\R^d$ and any centred Gaussian measure $\mu$ in $\R^d$, we have
$$
\mu(K\cap L)\ge \mu(K)\mu(L).
$$
This was proved by Royen \cite{R14}; see also \cite{LM17} for a self-contained presentation and proof. In the proof of Lemma \ref{lem:prior_prob_event} we have used the following extension to Gaussian measures on separable Banach spaces, which was referred to, without full details, in Remark 3 (i) in \cite{LM17}. We include a proof for completeness.

\begin{lemma}\label{lem:InfiniteIneq}
Let $\mu$ be a centred Borel Gaussian measure on a separable Banach space $\B$. Let $K,L$ be closed (with respect to the topology induced by $\|\cdot\|_\B$), convex, symmetric subsets of $\B$. Then,
$$
	\mu(K\cap L)\ge \mu(K)\mu(L).
$$

\proof
For $X\sim \mu$, the Karhunen-Loève expansion (\cite{ginenickl2016}, Theorem 2.6.10) of $X$ implies that there exists a complete orthonormal system $\{x_n, \ n\ge1\}$ of the RKHS of $X$ and i.i.d.~standard normal random variables $(\xi_n)_{n\ge1}$, such that $X_n=\sum_{i=1}^n\xi_ix_i$ converges almost surely to $X$ in the norm of $\B$. We first show that for any closed, convex, symmetric set $K \subseteq \B$,
\begin{equation}\label{eq:KL_expansion}
\mu(K) = \Pr \left( \sum_{i=1}^\infty \xi_i x_i \in K \right) = \lim_{n\to\infty} \Pr \left( \sum_{i=1}^n \xi_i x_i \in K \right).
\end{equation}
Since $K$ is closed and $X_n \to X$ almost surely, and hence also in law, the Portmanteau lemma (\cite{V98}, Lemma 2.2) implies
$$
	\limsup_{n\to\infty}\Pr(X_n\in K)\le \Pr(X\in K).
$$ 
On the other hand, since $K$ is convex and symmetric, Anderson's inequality (\cite{ginenickl2016}, p. 49) yields
$\Pr( X\in K) \leq \Pr(X_n\in K)$ for all $n\ge1$. Together, these establish \eqref{eq:KL_expansion}.

	Now let $K,L \subseteq \B$ be arbitrary closed, convex, symmetric sets and define
$$K_n=\left\{z=(z_1,\dots,z_n)\in\R^n : \sum_{i=1}^nz_ix_i\in K\right\} \subseteq \R^n$$
and analogously $L_n$. It is straightforward to check that $K_n$ is a convex and symmetric subset of $\R^n$. Furthermore, if $\{z^{(k)}\}_{k\geq 1} \subset K_n$ converges to some $z = (z_1,\dots,z_n) \in\R^n$, then
$$
	\left\|\sum_{i=1}^n z^{(k)}_i x_i - \sum_{i=1}^n z_i x_i\right\|_\B
	\le 
		\max_{i=1,\dots,n}\|x_i\|_\B \sum_{i=1}^n |z^{(k)}_i - z_i|
	\to
		0,
	\quad
		k\to\infty,
$$
so that, $K$ being closed in $\B$, $\sum_{i=1}^n z_i x_i\in K$ and $z\in K_n$. This shows that $K_n$ is closed, convex and symmetric in $\R^n$. Denoting by $\gamma_n=\Lcal(\xi_1,\dots,\xi_n)$ the standard Gaussian measure on $\R^n$, \eqref{eq:KL_expansion} then implies $\mu(K) = \lim_{n\to\infty} \gamma_n(K_n)$. Note that identical considerations apply to $L$ and $L_n$ and, since $K \cap L$ is also closed, convex and symmetric in $\B$, also to $K \cap L$ and $K_n \cap L_n$.

	Applying this and the Gaussian correlation inequality for the finite-dimensional Gaussian measure $\gamma_n$,
$$
	\mu(K\cap L)= \lim_{n\to\infty}\gamma_n(K_n\cap L_n)
	\ge \lim_{n\to\infty}\gamma_n(K_n)\gamma_n(L_n)
	= \mu(K)\mu(L).
$$
\endproof

\end{lemma}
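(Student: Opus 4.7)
The plan is to reduce the statement to the finite-dimensional Gaussian correlation inequality of Royen \cite{R14} via the Karhunen--Lo\`eve (KL) expansion of $X\sim\mu$. Writing $X = \sum_{i\ge 1}\xi_i x_i$ (convergence a.s.\ in $\B$) with $\{x_i\}$ an orthonormal basis of the RKHS of $\mu$ and $\xi_i\iid N(0,1)$, I set $X_n = \sum_{i=1}^n \xi_i x_i$ and aim to establish the approximation identity $\mu(K) = \lim_{n\to\infty}\Pr(X_n\in K)$ for every closed, convex and symmetric $K\subseteq \B$. Once this is in hand, the inequality will follow by pushing $K$ and $L$ forward to closed convex symmetric cylinder sets in $\R^n$, applying Royen's theorem there, and passing to the limit.

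For the approximation identity, two matching bounds are needed. The upper bound $\limsup_n \Pr(X_n\in K)\le \Pr(X\in K)$ is immediate from the Portmanteau theorem, since $X_n\to X$ almost surely (hence weakly) and $K$ is closed. For the reverse bound $\Pr(X\in K)\le \Pr(X_n\in K)$, the plan is to use the infinite-dimensional Anderson inequality on $\B$: if $Y$ is a centered Gaussian random element of $\B$ and $K\subset\B$ is closed, convex and symmetric, then $\Pr(Y+y\in K)\le \Pr(Y\in K)$ for every $y\in\B$. Writing $X = X_n + Z_n$ with $Z_n = \sum_{i>n}\xi_i x_i$ independent of $X_n$ and centered Gaussian in $\B$ (using orthogonality in the KL expansion and joint Gaussianity), conditioning on $Z_n$ and applying Anderson to the centered Gaussian $X_n$ gives $\Pr(X\in K) = \E[\Pr(X_n + Z_n\in K\mid Z_n)] \le \Pr(X_n\in K)$.

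For the finite-dimensional reduction, define $K_n = \{z\in\R^n : \sum_{i=1}^n z_i x_i \in K\}$; convexity and symmetry of $K_n$ are inherited from $K$, and closedness follows because the linear map $z\mapsto \sum_{i=1}^n z_i x_i$ from $\R^n$ to $\B$ is continuous, so $K_n$ is the preimage of a closed set. The same construction applies to $L_n$ and to $K_n\cap L_n = (K\cap L)_n$. Denoting by $\gamma_n = \Lcal(\xi_1,\dots,\xi_n)$ the standard Gaussian on $\R^n$, we have $\Pr(X_n\in K) = \gamma_n(K_n)$ and analogously for $L$ and $K\cap L$. Royen's theorem then yields $\gamma_n(K_n\cap L_n)\ge \gamma_n(K_n)\gamma_n(L_n)$, and sending $n\to\infty$ using the approximation identity three times (for $K$, $L$, and $K\cap L$, all closed, convex and symmetric as an intersection of such sets) gives $\mu(K\cap L)\ge \mu(K)\mu(L)$.

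The main subtlety is the lower-bound half of the approximation identity, which requires the infinite-dimensional Anderson inequality together with the key structural fact that the tail $Z_n$ is \emph{independent} of the partial sum $X_n$ (a consequence of the orthogonality in the KL expansion combined with joint Gaussianity). Once this independent decomposition $X = X_n + Z_n$ is exploited, the remainder of the argument is a routine limiting reduction to Royen's finite-dimensional inequality; no new hard analysis is required.
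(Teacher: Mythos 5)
Your proposal is correct and follows essentially the same route as the paper: KL expansion, a two-sided approximation identity $\mu(K)=\lim_n\Pr(X_n\in K)$ (Portmanteau for one direction, Anderson for the other, the latter via the independent decomposition $X=X_n+Z_n$ which the paper uses implicitly when invoking Anderson's inequality), reduction to finite-dimensional cylinder sets $K_n,L_n$, and Royen's theorem in $\R^n$. The only cosmetic differences are that you argue closedness of $K_n$ via continuity of $z\mapsto\sum_i z_i x_i$ rather than the paper's direct sequence argument, and you spell out the conditioning step behind the Anderson bound more explicitly.
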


\section{Proofs of Lemmas  \ref{Lemma:AnotherSmallBall}, \ref{lem:prior_prob_event} and \ref{lem:bias}}
\label{ProofOfProof}

\subsection{Proof of Lemma \ref{Lemma:AnotherSmallBall}}
\label{Sec:ProofOtherSmallBall}

\begin{lemmaSmallBall}
Suppose $B_0 \in H^{s+1}(\T^d) \cap \dot{L}^2(\T^d) $ for $s > d \vee (3/2)$. If $\eps_T = T^{-s/(2s+d)}$, $J \in \mathbb{N}$ satisfies $2^J \simeq T^{1/(2s+d)}$ and $M>0$, then as $T\to\infty$,
\begin{align*}
	P_{B_0} & \left(  \sup_{B \in V_J \cap \dot{L}^2: \|\nabla B - \nabla B_0\|
	_{L^2(\mu_0)} \leq M\eps_T} \frac{1}{T} \int_0^T  
	\|\nabla B(X_s)-\nabla B_0(X_s)\|^2 ds \leq M^2 \eps_T^2 
	+ o(\eps_T^2) \right) \\
	& \qquad \qquad \to 1.
\end{align*}
\end{lemmaSmallBall}

\begin{proof}

We first show that $\tfrac{1}{T} \int_0^T  \|\nabla B(X_s)-\nabla B_0(X_s)\|^2 ds$ is close to its ergodic average $\|\nabla B - \nabla B_0\|_{L^2(\mu_0)}^2$, uniformly over $ \Scal\Bcal_T  = \{B \in V_J\cap \dot{L}^2: \|\nabla B - \nabla B_0\|_{L^2(\mu_0)} \leq M \eps_T\}$. Define
$$
	\mF_T = \{ f_B(x) := \|\nabla B(x) - \nabla B_0(x)\|^2 
	- \|\nabla B - \nabla B_0\|_{L^2(\mu_0)}^2 : B \in  \Scal\Bcal_T  \cup \{B_0\} \}.
$$
Since $x \mapsto \|x\|^2$ is a smooth function, $ \Scal\Bcal_T  \subset V_J$ and $B_0 \in H^{s+1}$, it holds that $\mF_T \subset \dot{L}_{\mu_0}^2 \cap H^s$. By Lemma \ref{regest}, the Poisson equation $L_{B_0} u = f_B$ (writing $L_{B_0}$ for $L_{\nabla B_0}$) has a unique solution $L_{B_0}^{-1}[f_B]\in \dot L^2$ for any $B \in  \Scal\Bcal_T $. Applying Lemma 1 of \cite{nicklray2020}, since $s> d\vee(3/2) >(d/2+\kappa)\vee1 $ for $\kappa>0$ small enough, gives that for any $x\geq 0$,
\begin{equation}
\label{chaining_inequality}
	P_{B_0} \left( \sup_{f_B \in \mF_T} | \G_T[f_B]| 
	\geq \sup_{f\in\mF_T} \frac{2\|L_{B_0}^{-1}[f_B]\|_\infty}{\sqrt{T}} 
	+ J_{\mF_T}(4\sqrt{2} + 192x) \right) \leq e^{-x^2/2},
\end{equation}
where $J_{\mF_T} = \int_0^{D_{\mF_T}} \sqrt{ \log 2N(\mF_T,6d_L,\tau)} d\tau$, $d_L^2(f,g) = \sum_{i=1}^d \|\partial_{x_i} L_{B_0}^{-1}[f-g]\|_\infty^2$ and $D_{\mF_T}$ is the $d_L$-diameter of $\mF_T$. We now proceed to bound $J_{\mF_T}$.

	For $B \in  \Scal\Bcal_T $, write $h_i = \partial_{x_i}(B-B_0)$ so that $f_B = \sum_{i=1}^d h_i^2 - \|h_i\|_{L^2(\mu_0)}^2$. Using the Sobolev embedding theorem, Lemma \ref{regest} and the Runst-Sickel lemma (\cite{nicklray2020}, Lemma 2), for any $\kappa>0$,
\begin{align*}
	d_L(f_B,f_{\bar{B}} ) 
	& \lesssim \|f_B - f_{\bar{B}}\|_{H^{d/2+\kappa-1}} \\
	&\lesssim \sum_{i=1}^d \Big{\|} h_i^2 - \bar{h}_i^2 - \int_{\T^d} 
	(h_i^2 - \bar{h}_i^2 ) d\mu_0 \Big{\|}_{H^{d/2+\kappa-1}}\\
	&\lesssim \sum_{i=1}^d \| h_i -\bar{h}_i\|_ \infty \|h_i 
	+ \bar{h}_i\|_{H^{(d/2+\kappa-1)_+}} 
	+ \| h_i -\bar{h}_i\|_{H^{(d/2+\kappa-1)_+}} \|h_i + \bar{h}_i\|_\infty \\
	& \qquad  + \|\mu_0\|_\infty \|h_i-\bar{h}_i\|_2 \|h_i 
	+ \bar{h}_i\|_2 \|1\|_{H^{d/2+\kappa-1)}},
\end{align*}
where the constants depend only on $d$, $\kappa$ and $\|\nabla B_0\|_{B_{\infty\infty}^{|d/2+\kappa-1|}} \lesssim \|B_0\|_{H^{|d/2+\kappa-1|+d/2+1}} \lesssim \|B_0\|_{H^{s+1}}$, and we note that $\|\mu_0\|_\infty \leq e^{4\|B_0\|_\infty} <\infty$ using \eqref{Eq:InvMeas}. Since $B,\bar{B} \in V_J$, we have $\|h_i - \bar{h}_i\|_{H^u} \lesssim \|B-\bar{B}\|_{H^{u+1}} \leq 2^{Ju} \|B-\bar{B}\|_{H^1}$ for $u\geq 0$ and $\|h_i - \bar{h}_i\|_\infty \lesssim \|B-\bar{B}\|_{B_{1\infty}^1} \lesssim 2^{Jd/2} \|B-\bar{B}\|_{H^1}$. Furthermore,
\begin{align*}
	\|h_i + \bar{h}_i\|_{H^u} \lesssim \sup_{B \in  \Scal\Bcal_T } 
	\|B-B_0\|_{H^{u+1}} & \lesssim \sup_{B \in  \Scal\Bcal_T } 
	\|B - P_J B_0\|_{H^{u+1}} + \|B_0 - P_J B_0\|_{H^{u+1}} \\
	& \lesssim 2^{Ju} \sup_{B \in  \Scal\Bcal_T } 
	\|B - P_J B_0\|_{H^1} + 2^{-J(s-u)} \|B_0\|_{H^{s+1}}.
\end{align*}
Note that $\|\nabla B - \nabla B_0\|_{L^2(\mu_0)} \simeq \|B - B_0\|_{H^1}$ since $\mu_0$ is both bounded and bounded away from zero and $B,B_0 \in \dot{L}^2$. Therefore, $\sup_{B \in  \Scal\Bcal_T } \|B - P_J B_0\|_{H^1} + 2^{-Js} \|B_0\|_{H^{s+1}} \leq C (M+1)\eps_T$, so that $\|h_i + \bar{h}_i\|_{H^u} \leq C (M+1) 2^{Ju} \eps_T$. Similarly, using the Sobolev embedding theorem,
\begin{align*}
	\|h_i + \bar{h}_i\|_\infty \lesssim \sup_{B \in  \Scal\Bcal_T } 
	\|B-B_0\|_{B_{1\infty}^1} 
	& \lesssim \sup_{B \in  \Scal\Bcal_T } \|B - P_J B_0\|_{B_{1\infty}^1} 
	+ \|B_0 - P_J B_0\|_{B_{1\infty}^1} \\
	& \lesssim 2^{Jd/2} \sup_{B \in  \Scal\Bcal_T } \|B - P_J B_0\|_{H^1} 
	+ 2^{-J(s-d/2)} \|B_0\|_{H^{s+1}} \\
	& \leq C(M+1) 2^{Jd/2} \eps_T.
\end{align*}
Combining these bounds yields
\begin{align*}
	d_L(f_B,f_{\bar{B}} ) & \leq C(M+1)2^{J[d/2+(d/2+\kappa-1)_+]} 
	\eps_T \| B -\bar{B}\|_{H^1}.
\end{align*}
Since $ \Scal\Bcal_T  \subseteq (V_J,\|\cdot\|_{H^1})$ is finite dimensional, using the last display and the usual covering argument for balls in finite dimensional spaces (e.g.~\cite{ginenickl2016}, Proposition 4.3.34),
\begin{align*}
	\log N(\mF_T,6d_L,\tau) 
	& \leq \log N( \Scal\Bcal_T , C(M+1)2^{J[d/2+(d/2+\kappa-1)_+]} 
	\eps_T \|\cdot\|_{H^1},\tau) \\
	& \lesssim \text{dim}(V_J) \log (C(M+1)2^{J[d/2+(d/2+\kappa-1)_+]} 
	\eps_T \sup_{B,\bar{B}\in  \Scal\Bcal_T } \|B-\bar{B}\|_{H^1} /\tau) \\
	& \lesssim \text{dim}(V_J) \log (CR_T /\tau),
\end{align*}
where $C>0$ and 
$$
	R_T := (M+1)^2 2^{J[d/2+(d/2+\kappa-1)_+]} \eps_T^2 \to 0
$$
under the current assumption on $s$ for $\kappa>0$ small enough. Recall the inequality $\int_0^a \sqrt{\log (A/x)}$ $dx \leq 4a\sqrt{\log (A/a)}$ for any $A \geq 2$ and $0<a\leq 1$ (\cite{ginenickl2016}, p. 190). Using this inequality, the last display with $\text{dim}(V_J) = O(2^{Jd})$ and that $\mF_T$ has $d_L$-diameter $D_{\mF_T} \lesssim R_T \to 0$,
\begin{align*}
	J_{\mF_T} 
	& \lesssim \text{dim}(V_J) \int_0^{D_{\mF_T}}
	\sqrt{\log ( [CR_T] \vee 2 /\tau) }d\tau  
	\lesssim 2^{Jd/2} D_{\mF_T} \sqrt{\log ( [CR_T] \vee 2 /D_{\mF_T}) }.
\end{align*}
Taking $D_{\mF_T} \simeq  R_T$ in the last display gives $J_{\mF_T}  \lesssim 2^{Jd/2} R_T  \left( 1 + \sqrt{\log (1/R_T)} \right).$ Arguing as for the bound for $d_L(f_B,f_{\bar{B}})$ above, one has for all $B \in  \Scal\Bcal_T $,
\begin{align*}
	\|L_{B_0}^{-1}[f_B]\|_\infty 
	\lesssim \|f_B\|_{H^{d/2+\kappa-1}} 
	& \lesssim \sum_{i=1}^d \Big{\|}h_i^2 - 
	\int_{\T^d} h_i^2 d\mu_0\Big{\|}_{H^{d/2+\kappa-1}} \\
	& \lesssim \sum_{i=1}^d \|h_i\|_\infty \|h_i\|_{H^{(d/2+\kappa-1)_+}}
	\lesssim 2^{J[d/2+(d/2+\kappa-1)_+]} (M+1)^2 \eps_T^2
\end{align*}
where the constants depend only on $d$, $\kappa$, $\|\mu_0\|_\infty$ and  $\|\nabla B_0\|_{B_{\infty\infty}^{|d/2+\kappa-1|}}\lesssim \|B_0\|_{H^{|d/2+\kappa-1|+d/2+1}}$ $\lesssim \|B_0\|_{H^{s+1}}$. Substituting this bound and $J_{\mF_T}  \lesssim 2^{Jd/2} R_T  \left( 1 + \sqrt{\log (1/R_T)} \right)$ into \eqref{chaining_inequality}, 
\begin{equation*}
	P_{B_0} \left( 
	\sup_{f_B \in \mF_T} | \G_T[f_B]| \geq C2^{Jd/2} 
	R_T \left(1 + \sqrt{\log (1/R_T)} \right) \left( 1 + x \right) \right) \leq e^{-x^2/2}
\end{equation*}
for any $x \geq 0$. Set 
$$
	\zeta_T 
	= M_T T^{-1/2} 2^{Jd/2} R_T (1 + \sqrt{\log (1/R_T)})
	= O(M_T\sqrt{\log T} T^{-\frac{s-d/2-(d/2+\kappa-1)_+}{2s+d}} \eps_T^2),
$$
which satisfies $\zeta_T = o(\eps_T^2)$ for $M_T \to \infty$ growing slow enough, since $s>d \vee (3/2) > d/2+(d/2+\kappa-1)_+$ for $\kappa>0$ small enough. Then using the definition of the empirical process $\G_T[f_B]$, for any $M_T \to \infty$, 
\begin{equation*}
	P_{B_0} \Bigg( \sup_{B \in  \Scal\Bcal_T } \Bigg|  \frac{1}{T} \int_0^T
	 \|\nabla B(X_s) - \nabla B_0(X_s)\|^2 ds - \|\nabla B - \nabla B_0\|
	 _{L^2(\mu_0)}^2 \Bigg| \geq \zeta_T  \Bigg) \to 0
\end{equation*}
as $T \to \infty$. The result then follows because on the complement of the event in the last display,
\begin{align*}
	\sup_{B \in  \Scal\Bcal_T } \frac{1}{T} \int_0^T  \|\nabla B(X_s)-
	\nabla B_0(X_s)\|^2 ds & \leq \sup_{B \in  \Scal\Bcal_T } 
	\|\nabla B - \nabla B_0\|_{L^2(\mu_0)}^2 + \zeta_T \\
	& \leq M^2 \eps_T^2 +o(\eps_T^2).
\end{align*}

\end{proof}

\subsection{Proof of Lemma \ref{lem:prior_prob_event}}

\begin{lemmaPriorProb}
	For $s,M,\kappa>0$, $p\in[1,2]$ and sequences  $\eps_T= T^{-s/(2s+d)},\ \overline\eps_T=T^{-(s+1)/(2s+d)}$, define the sets
$$
	\mB_T 
	=
		\{ B = B_1 + B_2: \|B_1\|_\infty \leq \overline{\eps}_T, 
		\|B_1\|_{C^1} \leq \eps_T,
		\|B_2\|_{B^{s+1}_{pp}} \leq M, 
		\|B\|_{C^{(d/2+\kappa)\vee 2}} \leq M \}.
$$
Assume either:
\begin{enumerate}
\item[(i)] $p=2$ and $B = W/(\sqrt{T}\eps_T)$ for $W\sim \Pi_W$ a Gaussian process satisfying Condition \ref{GP_condition};

\item[(ii)] $B = W/(T\eps_T^2)^\frac{1}{p}$ for $W\sim\Pi_W$ a $p$-exponential random element as in \eqref{Eq:pExpBasePrior} with $s>(d/2+\kappa)\vee 2+d/p-1$.
\end{enumerate}
Let $\Pi=\Pi_T$ be the law of $B$. Then, for every $K>0$, there exists $M>0$ large enough such that $\Pi(\mB_T^c) \leq e^{-KT\eps_T^2}.$

\end{lemmaPriorProb}

\begin{proof}

(i) In the Gaussian case, define the sets
\begin{align*}
	\mB_{T,1}&:=\{B = B_1 + B_2:  \|B_1\|_\infty \leq \overline{\eps}_T,
	 \|B_1\|_{C^1} \leq \eps_T,
	 \|B_2\|_{B^{s+1}_{22}}\leq M\},\\
	\mB_{T,2}&:=\{ B: \|B\|_{C^{(d/2+\kappa)\vee 2}} \leq M\}.
\end{align*}
To upper bound $\Pi(\mB_T^c)$ it thus suffices to upper bound $\Pi(\mB_{T,1}^c)$ and $\Pi(\mB_{T,2}^c)$.  Since $\|g\|_{B^{s+1}_{22}}=\|g\|_{H^{s+1}} \leq c_0\|g\|_\H$ for all $g\in\H$ under Condition \ref{GP_condition}, Borell's isoperimetric inequality (\cite{ginenickl2016}, Theorem 2.6.12) gives
\begin{align}
\label{Isop}
	\Pi(\mB_{T,1})
	&=
		\Pi_W(W = W_1 + W_2:  \|W_1\|_\infty \leq \sqrt{T}\eps_T
		\overline{\eps}_T,\|W_1\|_{C^1} \leq \sqrt{T}\eps_T^2,
		\|W_2\|_{B^{s+1}_{22}} \le M \sqrt{T}\eps_T) \nonumber\\
	& \geq 
		\Phi (\Phi^{-1}(\Pi_W(W: \|W\|_{\infty} \leq \sqrt{T}\eps_T\overline\eps_T, \|W\|_{C^1} \leq \sqrt{T}\eps_T^2)) +  M \sqrt{T}\eps_T/c_0),
\end{align}
where $\Phi$ is the standard normal cumulative distribution function. Now for $\H_1$ and $H_1^{s+1}$ the unit balls of $\H$ and $H^{s+1}$ respectively, we have under Condition \ref{GP_condition} that
\begin{align*}
	\log N(\H_1,\|\cdot\|_{C^1},\tau) \leq \log N(  c_0H^{s+1}_1,\|\cdot\|_{C^1},\tau)
	\lesssim \tau^{-d/s},
\end{align*}
where the last inequality follows by arguing as in Theorem 4.3.36 of \cite{ginenickl2016}.
By Theorem 1.2 of Li and Linde \cite{li1999}, this yields
\begin{equation}\label{SB_C1}
\Pi_W(\|W\|_{C^1} \leq \eta) \geq e^{-c_1^2 \eta^{-2d/(2s-d)}} \qquad \text{as } \eta \to 0,
\end{equation}
for any $d/2<s<\infty$ and some $c_1 = c_1(d,s,c_0)>0$, which implies, since $\sqrt{T}\eps_T^2\to0$,
$$
	\Pi_W(\|W\|_{C^1} \leq \sqrt{T}\eps_T^2 ) 
	\geq e^{-c_1^2 (\sqrt{T}\eps_T^2)^{-2d/(2s-d)}}
	= e^{-c_1^2 T\eps_T^2}.
$$
Using the same argument, now with the bound
\begin{align*}
	\log N(\H_1,\|\cdot\|_\infty,\tau) \leq 
	\log N(  c_0H^{s+1}_1,\|\cdot\|_\infty,\tau) \lesssim \tau^{-d/(s+1)},
\end{align*}
it follows for some $c_2>0$ that $\Pi_W(\|W\|_{\infty} \leq \sqrt{T}\eps_T\overline\eps_T) \ge e^{-c_2^2 T\eps_T^2}.$
The Gaussian correlation inequality (which holds for Gaussian measures in separable Banach spaces, see 
Lemma \ref{lem:InfiniteIneq} below) then gives for $c_3=c_1^2+c_2^2$
\begin{align*}
	\Pi_W(\|W\|_{\infty} \leq \sqrt{T}\eps_T\overline\eps_T, 
	\|W\|_{C^1} \leq \sqrt{T}\eps_T^2)
&\ge
	\Pi_W( \|W\|_{\infty} \leq \sqrt{T}\eps_T\overline\eps_T)
	\Pi_W( \|W\|_{C^1} \leq \sqrt{T}\eps_T^2)\\
&\ge
	e^{-c_3T\eps_T^2}.
\end{align*}
Using the standard inequality $\Phi^{-1}(y) \geq -\sqrt{2\log (1/y)}$ for $0<y<1$, the right hand side of \eqref{Isop} is thus lower bounded by
$$
	\Phi( ( M /c_0- \sqrt{2c_3}) \sqrt{T}\eps_T ).
$$
Defining $m_T = -\Phi^{-1}(e^{-KT\eps_T^2}/2)$, this further gives $m_T \leq \sqrt{2\log 2} + \sqrt{2KT}\eps_T$, which can be made smaller than $( M /c_0- \sqrt{2c_3})\sqrt{T}\eps_T$ by taking $M = M(K,c_0,c_3)$ large enough. For such $M$, the last display is lower bounded by $\Phi(m_T) = 1-\Phi (\Phi^{-1}(e^{-KT\eps_T^2}/2)) = 1-\tfrac{1}{2}e^{-KT\eps_T^2}$.

	To bound  $\Pi(\mB_{T,2}^c)$, recall that by Condition \ref{GP_condition}, $W$ defines a Gaussian Borel random element in a separable linear subspace $\mathcal{S}$ of $C^{(d/2+\kappa)\vee 2}$. Using the Hahn-Banach theorem, we may thus represent its norm as
$$
	\|W\|_{C^{(d/2+\kappa)\vee 2}} = \sup_{L \in \mathcal{L}} |L(W)|,
$$
where $\mathcal{L}$ is a countable set of bounded linear functionals on $(\mathcal{S},\|\cdot\|_{C^{(d/2+\kappa)\vee 2}})$. Applying Fernique's theorem \cite[Theorem 2.1.20]{ginenickl2016} to the centred Gaussian process $(X(L) = L(W): L \in \mathcal{L})$, we have $E\|W\|_{C^{(d/2+\kappa)\vee 2}} = E \sup_{L\in\mathcal{L}}|X(L)| \leq D < \infty$, and for $ M =M(D)>0$ large enough and since $\sqrt{T}\eps_T \to \infty$,
$$
	\Pi(\mB_{T,2}^c) 
	\leq \Pi_W \left(W: \|W\|_{C^{(d/2+\kappa)\vee 2}} 
	- E\|W\|_{C^{(d/2+\kappa)\vee 2}} 
	\geq M\sqrt{T}\eps_T \right) \leq 2 e^{-D'M^2 T\eps_T^2}
$$
for some fixed constant $D'>0$. Taking $M>0$ large enough, the right-hand side can be made less than $\tfrac{1}{2}e^{-KT\eps_T^2}$, concluding the proof.

\smallskip

	(ii). Turning to $p$-exponential priors, define the set
\begin{align*}
	\Bcal'_T
	=
		\big\{B=B'_1+B'_2+B_3'\ :\ & B'_i\in V_J\cap \dot L^2, \ \|B'_{1}\|_{\infty}
		\le \overline \eps_T,\\
	&
		\|B'_{2}\|_{H^{s+1+d/2-d/p}}\le  M^\frac{p}{2}
		(T\eps_T^2)^{\frac{1}{2}-\frac{1}{p}},\
		\ \|B'_3\|_{B^{s+1}_{pp}}\le M \big\}.
\end{align*}
We lower bound the prior probability of $\Bcal'_T$ using the generalization of Borell’s inequality to $p$-exponential measures. The space of admissible shifts (cfr. Proposition 2.7 in \cite{ADH20}) of the scaled $p$-exponential random element $B=W/(T\eps_T^2)^\frac{1}{p}$ is $\Qcal = V_J\cap \dot{L}^2$, with norm
$$
	\|h\|_\Qcal
	=
		(T\eps_T^2)^\frac{1}{p}\Bigg(\sum_{l =0}^J\sum_r
		2^{2 l \big(s+1+\frac{d}{2}-\frac{d}{p}\big)}|\langle h,
		\Phi_{l r}\rangle_2|^2\Bigg)^\frac{1}{2}
	=
		\big(T\eps_T^2\big)^\frac{1}{p}\|h\|_{H^{s+1+d/2-d/p}},
	\quad h\in\Qcal.
$$
Then, recalling the $\Zcal$-norm defined in \eqref{Eq:Znorm}, Proposition 2.15 in \cite{ADH20} implies
\begin{align*}
	\Pi(\Bcal'_T)
	&=
		\Pi\big(B=B'_{1}+B'_{2}+B'_{3}: \|B'_{1}\|_{\infty}\le \overline\eps_T,
		\|B'_{2}\|_{\Qcal}\le (M^pT\eps_T^2)^\frac{1}{2},
		\|B'_{3}\|_{\Zcal}\le M(T\eps_T^2)^\frac{1}{p} \big)\\
	&\ge
		1-\frac{1}{\Pi(\|B\|_\infty
		\le\overline\eps_T\big)} \exp \left( -(M^p/k)T\eps_T^2 \right)
\end{align*}
for some $k=k(p)>0$. By an analogous small ball computation as in the proof of Proposition 6.3 in \cite{ADH20}, it follows that as $T\to\infty$
\begin{align*}
	-\log \Pi(\|B\|_\infty\le \overline{\eps}_T)
	&=
		-\log \Pi_W (\|W\|_\infty \le \overline{\eps}_T (T\eps_T^2)^\frac{1}{p} )\\
	&\simeq 
		[\overline{\eps}_T (T\eps_T^2)^\frac{1}{p} ]^\frac{d}{s+1-d/p}
	= T\eps_T^2.
\end{align*}
Thus, for some constant $c_1=c_1(s,p,d)>0$, we have $\Pi(\Bcal_T') \geq 1-e^{-[(M^p/k)-c_1]T\eps_T^2},$ so that for any $K>0$ we can choose $M=M(K,c_1,k)=M(K,s,p,d)>0$ large enough to obtain $\Pi(\Bcal'_T) \ge 1-e^{-KT\eps_T^2}.$

	We conclude the proof by showing that $\Bcal_T'\subset \Bcal_T$. First, since $B'_1\in V_J$ we have $\|B'_1\|_{C^1}\le 2^J \| B'_1 \|_{\infty} \lesssim T^{1/(2s+d)} \overline\eps_T = \eps_T$, 
and therefore both norm bounds on $B_1=B_1'$ in \eqref{B_T set} are satisfied.
Also note that 
\begin{equation}
\label{Eq:AnotherBound}
	\|B'_1\|_{C^{(d/2+\kappa)\vee2}}\le 2^{J(d/2+\kappa)\vee2} \| B'_1 \|_{\infty}
	\lesssim T^{\frac{(d/2+\kappa)\vee2}{2s+d}} \overline\eps_T = o(1)
\end{equation}
since by assumption $s+1>(d/2+\kappa)\vee 2+d/p$. Next, since also $B_2' \in V_J$, using the continuous embedding $H^{s+1}(\T^d)\subseteq B^{s+1}_{pp}(\T^d), \ p\le2$ (p.33 in \cite{LSS09}),
\begin{align*}
	\|B'_{2}\|_{B_{pp}^{s+1}}
	\lesssim
		\|B'_{2}\|_{H^{s+1}}
	& \le
		2^{Jd\left(\frac{1}{p}-\frac{1}{2}\right)}\|B_{2}'\|_{H^{s+1+d/2-d/p}} \\
	&\leq 
		2^{Jd\left(\frac{1}{p}-\frac{1}{2}\right)} M^\frac{p}{2}
		(T\eps_T^2)^{\frac{1}{2}-\frac{1}{p}}  
	\simeq 
		M^\frac{p}{2}.
\end{align*}
Thus, taking $B_2=B_2'+B_3'$ implies $\|B_2\|_{B^{s+1}_{pp}}\lesssim M^{p/2}+M \lesssim M$ for $M\geq 1$ as required. Finally, using \eqref{Eq:AnotherBound} and the continuous embedding $B^{s+1}_{pp} \subset C^{(d/2+\kappa)\vee 2}$ holding for all $s+1> (d/2+\kappa)\vee 2+d/p$ (p.170 in \cite{ST87}, using the embedding $B^{s+1}_{pp}\subset B^{s'}_{p1}\subset C^{(d/2+\kappa)\vee 2}$ holding for $s+1> s'> (d/2+\kappa)\vee 2+d/p$) if $d\le 3$, in which in case $(d/2+\kappa)\vee 2 = 2\in\N$ for small $\kappa$),
$$
	\|B\|_{C^{(d/2+\kappa)\vee 2}}
	\le \|B'_1\|_{C^{(d/2+\kappa)\vee 2}}
	+ \|B_2\|_{B^{s+1}_{pp}} \lesssim M,
$$
concluding the proof.
\end{proof}

\subsection{Proof of Lemma \ref{lem:bias}}

\begin{lemmaBias}
For $s,M,\kappa>0$ and $p\in[1,2]$, let $\mB_T$ be the set in \eqref{B_T set}, with $\eps_T, \overline \eps_T$ as in Lemma \ref{lem:prior_prob_event}. If $2^J \simeq T^{1/(2s+d)}$, then there exists a finite constant $C$ depending on $s,d,m$ and the wavelet basis $\{\Phi_{l r}\}$ such that
$$ 
	\left\{ \mu_B = \frac{e^{2B}}{\int_{\T^d} e^{2B(x)}dx}: B \in \mB_T \right\} 
	\subset \{ \mu: \|\mu - P_J\mu\|_{W^{1,p}} \leq C\eps_T\}.
$$

\end{lemmaBias}

\begin{proof}

Since $\|B\|_\infty \leq \|B\|_{C^2} \leq M$ for every $B\in \mB_T$, this implies $e^{-2M} \leq \int_{\T^d} e^{2B}dx \leq e^{2M}$ and hence $\|\mu_B - P_J\mu_B\|_{W^{1,p}} \leq e^{2M} \|e^{2B} - P_Je^{2B}\|_{W^{1,p}}$, so it suffices to bound the last quantity. For a function $f$ on the torus $\T^d$, denote by $\bar{f}$ its periodic extension to $\R^d$. Recall that the periodic projection satisfies $P_Jf(x) = \int_{\R^d} K_J(x,y) \bar{f}(y) dy$ for all $x\in (0,1]^d$, where $K_J(x,y) = 2^{Jd} \sum_{k \in \Z^d} \phi(2^J x-k) \phi(2^J y- k)$ is the unperiodized wavelet kernel and $\phi$ is the unperiodized father wavelet used in the construction of the periodized wavelet basis (see (4.127) in \cite{ginenickl2016}). Using that $\int_{\R^d} K_J(x,y)dy = 1$ for all $x\in (0,1]^d$ and writing $B=B_1 + B_2$ as in \eqref{B_T set},
\begin{align*}
	|\partial_{x_i}(e^{2B}-P_Je^{2B})(x)|
	&=
		\Big| \partial_{x_i}\int_{\R^d}K_J(x,y)
		(e^{2B_1(x)+2B_2(x)}-e^{2\bar B_1(y)+2\bar B_2(y)}) 
		dy\Big|\\
	&\le
		\Big| \partial_{x_i}\Big(e^{2B_1(x)}\int_{\R^d}K_J(x,y)
		(e^{2B_2(x)}-e^{2\bar B_2(y)}) dy\Big)\Big|\\
	& \quad +
		\Big| \partial_{x_i}\Big(\int_{\R^d}K_J(x,y)(e^{2B_1(x)}
		-e^{2\bar B_1(y)})e^{2\bar B_2(y)}dy\Big)\Big| \\
	& \leq e^{2B_1(x)} \Big|2\partial_{x_i}B_1(x)
	[e^{2B_2(x)}-P_Je^{2B_2}(x)]
	+\partial_{x_i}[e^{2B_2(x)}-P_Je^{2B_2}(x)]
	\Big|\\
	& \quad +
		\Big|\int_{\R^d}\partial_{x_i}K_J(x,y)(e^{2B_1(x)}
		-e^{2\bar B_1(y)})e^{2\bar B_2(y)}dy\Big|\\
	& \quad +
		e^{2B_1(x)}\Big|2\partial_{x_i}B_1(x)\int_{\R^d}K_J(x,y)
		e^{2\bar B_2(y)}dy\Big|.
\end{align*}
Using that $\int_{\R^d}\partial_{x_i}K_J(x,y)dy\lesssim 2^J$ by the localization property of wavelets and that $|e^x-1| \lesssim x$ for small $|x|$, the last display is bounded by a multiple of
\begin{align*}
	e^{2\|B_1\|_\infty}& \|B_1\|_{C_1}
	\Big|(e^{2B_2}-P_Je^{2B_2})(x)\Big|
	+e^{2\|B_1\|_\infty} \Big|\partial_{x_i}(e^{2B_2}-P_Je^{2B_2})(x)\Big|\\
	&\quad
		+2^Je^{2\|B_2\|_\infty}\|B_1\|_\infty
		+e^{2\|B_1\|_\infty}\|B_1\|_{C^1}|P_Je^{2 B_2}(x)|.
\end{align*}
Taking $p^{th}$ powers and integrating, and using the embedding $B^0_{p1}(\T^d) \subset L^p(\T^d)$ (Theorem 1, p.163 in \cite{ST87}), then yields
\begin{align*}
	\|\partial_{x_i}(e^{2B}-P_Je^{2B})\|_{p}
	&\lesssim
		e^{2\|B_1\|_\infty}\|B_1\|_{C_1}
		\|e^{2 B_2}-P_Je^{2 B_2}\|_{B^0_{p1}}
		+e^{2\|B_1\|_\infty}\|e^{2 B_2}-P_Je^{2 B_2}\|_{B^1_{p1}}\\
	&\quad
		+2^Je^{2\|B_2\|_\infty}\|B_1\|_\infty
		+e^{2\|B_1\|_\infty}\|B_1\|_{C_1}
		\|P_Je^{2 B_2}\|_{B^0_{p1}}.
\end{align*}
By Lemma \ref{lem:exp_map}, $\|e^{2 B_2}\|_{B^{s+1}_{pp}} \lesssim 1+\|B_2\|_{B^{s+1}_{pp}} + \|B_2\|_{B^{s+1}_{pp}}^{s+1}\leq c(m,s,p)$, which implies
\begin{align*}
	\|e^{2 B_2}-P_Je^{2 B_2}\|_{B^1_{p1}}
	&=
		\sum_{ l >J} 2^{l  (1+d/2-d/p)}\Big(
		\sum_{r}|\langle e^{2 B_2},\Phi_{l r}\rangle_2 |^p\Big)^{1/p}\\
	&=
		\sum_{ l >J} 2^{- l s}\Big( 2^{p  l (s+1+d/2-d/p)}
		\sum_{r}|\langle e^{2 B_2},\Phi_{l r}\rangle_2 |^p\Big)^{1/p}\\
	&\le
		\|e^{2B_2}\|_{B^{s+1}_{pp}}\sum_{ l >J} 2^{- l s}
	\lesssim
		2^{-J s}.
\end{align*}
By a similar computation, $\|e^{2 B_2}-P_Je^{2 B_2}\|_{B^0_{p1}}\lesssim 2^{-J(s+1)}$, while by H\"older's inequality with exponents $(p/(p-1),p)$,
\begin{align*}
	\|P_J e^{2 B_2}\|_{B^0_{p1}}
	&=
		\sum_{ l \le J} 2^{l  (d/2-d/p)}\Big(\sum_r
		|\langle e^{2 B_2},\Phi_{l r}\rangle_2 |^p\Big)^{1/p}\\
	&\le
		\Big( \sum_{ l \le J}[2^{- l (s+1)} ]^\frac{p}
		{p-1}\Big)^\frac{p-1}{p}
		\Big(\sum_{l \le J} 2^{p l (s+1+d/2-d/p)}\sum_r
		|\langle e^{2 B_2},\Phi_{l r}\rangle_2 |^p\Big)^{1/p}\\
	&\lesssim
		\|e^{2B_2}\|_{B^{s+1}_{pp}}
	\lesssim 
		1.
\end{align*}
Combining the above bounds, using the definition of $\eps_T, \overline\eps_T$ in Lemma \ref{lem:prior_prob_event} and that $2^J\simeq T^{1/(2s+d)}$, we thus obtain that for any $B \in \mathcal{B}_T$, 
\begin{align*}
	\|\partial_{x_i}(e^{2B}-P_Je^{2B})\|_{p}
	& \lesssim
		e^{2\overline{\eps}_T} \eps_T 2^{-J(s+1)}
		+e^{2\overline{\eps}_T} 2^{-Js}
		+2^J e^{2M}\overline{\eps}_T
		+e^{2\overline{\eps}_T} \eps_T
		\lesssim  \eps_T.
\end{align*}
By a similar, in fact easier, computation, we also obtain
$$
	\|e^{2B}-P_Je^{2B}\|_{p} \lesssim e^{2\bar \eps_T} 2^{-J(s+1)}+ e^{2M}
	\bar \eps_T = o(\eps_T).
$$
The required bias bound then follows from the last two displays.
\end{proof}

\end{appendix}

\end{supplement}


\bibliographystyle{imsart-number} 
\bibliography{rev_diffusion_bvm}       

\begin{thebibliography}{86}

\bibitem{A18}
\begin{barticle}[author]
\bauthor{\bsnm{Abraham},~\bfnm{Kweku}\binits{K.}}
(\byear{2019}).
\btitle{Nonparametric {B}ayesian posterior contraction rates for scalar
  diffusions with high-frequency data}.
\bjournal{Bernoulli}
\bvolume{25}
\bpages{2696--2728}.
\bdoi{10.3150/18-BEJ1067}
\bmrnumber{4003562}
\end{barticle}
\endbibitem

\bibitem{AN19}
\begin{barticle}[author]
\bauthor{\bsnm{Abraham},~\bfnm{Kweku}\binits{K.}} \AND
  \bauthor{\bsnm{Nickl},~\bfnm{Richard}\binits{R.}}
(\byear{2019}).
\btitle{On statistical {C}alder\'{o}n problems}.
\bjournal{Math. Stat. Learn.}
\bvolume{2}
\bpages{165--216}.
\bmrnumber{4130599}
\end{barticle}
\endbibitem

\bibitem{ABDH18}
\begin{barticle}[author]
\bauthor{\bsnm{Agapiou},~\bfnm{Sergios}\binits{S.}},
  \bauthor{\bsnm{Burger},~\bfnm{Martin}\binits{M.}},
  \bauthor{\bsnm{Dashti},~\bfnm{Masoumeh}\binits{M.}} \AND
  \bauthor{\bsnm{Helin},~\bfnm{Tapio}\binits{T.}}
(\byear{2018}).
\btitle{Sparsity-promoting and edge-preserving maximum {\it a posteriori}
  estimators in non-parametric {B}ayesian inverse problems}.
\bjournal{Inverse Problems}
\bvolume{34}
\bpages{045002, 37}.
\bdoi{10.1088/1361-6420/aaacac}
\bmrnumber{3774703}
\end{barticle}
\endbibitem

\bibitem{ADH20}
\begin{barticle}[author]
\bauthor{\bsnm{Agapiou},~\bfnm{Sergios}\binits{S.}},
  \bauthor{\bsnm{Dashti},~\bfnm{Masoumeh}\binits{M.}} \AND
  \bauthor{\bsnm{Helin},~\bfnm{Tapio}\binits{T.}}
(\byear{2021}).
\btitle{Rates of contraction of posterior distributions based on
  {$p$}-exponential priors}.
\bjournal{Bernoulli}
\bvolume{27}
\bpages{1616--1642}.
\bdoi{10.3150/20-bej1285}
\bmrnumber{4278794}
\end{barticle}
\endbibitem

\bibitem{AW21}
\begin{barticle}[author]
\bauthor{\bsnm{Agapiou},~\bfnm{Sergios}\binits{S.}} \AND
  \bauthor{\bsnm{Wang},~\bfnm{Sven}\binits{S.}}
(\byear{2021}).
\btitle{Laplace priors and spatial inhomogeneity in Bayesian inverse problems}.
\bjournal{arXiv e-prints arXiv:2205.07764}
\bpages{arXiv:2205.07764}.
\end{barticle}
\endbibitem

\bibitem{A07}
\begin{barticle}[author]
\bauthor{\bsnm{Aurzada},~\bfnm{Frank}\binits{F.}}
(\byear{2007}).
\btitle{On the lower tail probabilities of some random sequences in {$l_p$}}.
\bjournal{J. Theoret. Probab.}
\bvolume{20}
\bpages{843--858}.
\bdoi{10.1007/s10959-007-0095-9}
\bmrnumber{2359058}
\end{barticle}
\endbibitem

\bibitem{bakry2014}
\begin{bbook}[author]
\bauthor{\bsnm{Bakry},~\bfnm{Dominique}\binits{D.}},
  \bauthor{\bsnm{Gentil},~\bfnm{Ivan}\binits{I.}} \AND
  \bauthor{\bsnm{Ledoux},~\bfnm{Michel}\binits{M.}}
(\byear{2014}).
\btitle{Analysis and geometry of {M}arkov diffusion operators}.
\bseries{Grundlehren der Mathematischen Wissenschaften [Fundamental Principles
  of Mathematical Sciences]}
\bvolume{348}.
\bpublisher{Springer, Cham}.
\bdoi{10.1007/978-3-319-00227-9}
\bmrnumber{3155209}
\end{bbook}
\endbibitem

\bibitem{B11}
\begin{bbook}[author]
\bauthor{\bsnm{Bass},~\bfnm{Richard~F.}\binits{R.~F.}}
(\byear{2011}).
\btitle{Stochastic processes}.
\bpublisher{Cambridge Univ. Press, Cambridge}.
\bdoi{10.1017/CBO9780511997044}
\bmrnumber{2856623}
\end{bbook}
\endbibitem

\bibitem{BRO18}
\begin{barticle}[author]
\bauthor{\bsnm{Batz},~\bfnm{Philipp}\binits{P.}},
  \bauthor{\bsnm{Ruttor},~\bfnm{Andreas}\binits{A.}} \AND
  \bauthor{\bsnm{Opper},~\bfnm{Manfred}\binits{M.}}
(\byear{2018}).
\btitle{Approximate {B}ayes learning of stochastic differential equations}.
\bjournal{Phys. Rev. E}
\bvolume{98}
\bpages{022109}.
\bdoi{10.1103/PhysRevE.98.022109}
\end{barticle}
\endbibitem

\bibitem{BJS64}
\begin{bbook}[author]
\bauthor{\bsnm{Bers},~\bfnm{Lipman}\binits{L.}},
  \bauthor{\bsnm{John},~\bfnm{Fritz}\binits{F.}} \AND
  \bauthor{\bsnm{Schechter},~\bfnm{Martin}\binits{M.}}
(\byear{1964}).
\btitle{Partial differential equations}.
\bseries{Lectures in Applied Mathematics, Vol. III}.
\bpublisher{Wiley \& Sons, Inc.\, New York-London-Sydney}.
\end{bbook}
\endbibitem

\bibitem{BPRF06}
\begin{barticle}[author]
\bauthor{\bsnm{Beskos},~\bfnm{Alexandros}\binits{A.}},
  \bauthor{\bsnm{Papaspiliopoulos},~\bfnm{Omiros}\binits{O.}},
  \bauthor{\bsnm{Roberts},~\bfnm{Gareth~O.}\binits{G.~O.}} \AND
  \bauthor{\bsnm{Fearnhead},~\bfnm{Paul}\binits{P.}}
(\byear{2006}).
\btitle{Exact and computationally efficient likelihood-based estimation for
  discretely observed diffusion processes}.
\bjournal{J. R. Stat. Soc. Ser. B Stat. Methodol.}
\bvolume{68}
\bpages{333--382}.
\bnote{With discussions and a reply by the authors}.
\end{barticle}
\endbibitem

\bibitem{B98}
\begin{bbook}[author]
\bauthor{\bsnm{Bogachev},~\bfnm{Vladimir~I.}\binits{V.~I.}}
(\byear{1998}).
\btitle{Gaussian measures}.
\bseries{Mathematical Surveys and Monographs}
\bvolume{62}.
\bpublisher{American Mathematical Society, Providence, RI}.
\bdoi{10.1090/surv/062}
\bmrnumber{1642391}
\end{bbook}
\endbibitem

\bibitem{bolhuis2002}
\begin{barticle}[author]
\bauthor{\bsnm{Bolhuis},~\bfnm{Peter~G.}\binits{P.~G.}},
  \bauthor{\bsnm{Chandler},~\bfnm{David}\binits{D.}},
  \bauthor{\bsnm{Dellago},~\bfnm{Christoph}\binits{C.}} \AND
  \bauthor{\bsnm{Geissler},~\bfnm{Phillip~L.}\binits{P.~L.}}
(\byear{2002}).
\btitle{TRANSITION PATH SAMPLING: Throwing Ropes Over Rough Mountain Passes, in
  the Dark}.
\bjournal{Annual Review of Physical Chemistry}
\bvolume{53}
\bpages{291-318}.
\bnote{PMID: 11972010}.
\bdoi{10.1146/annurev.physchem.53.082301.113146}
\end{barticle}
\endbibitem

\bibitem{bourdaud2010}
\begin{bincollection}[author]
\bauthor{\bsnm{Bourdaud},~\bfnm{G\'{e}rard}\binits{G.}} \AND
  \bauthor{\bsnm{Sickel},~\bfnm{Winfried}\binits{W.}}
(\byear{2011}).
\btitle{Composition operators on function spaces with fractional order of
  smoothness}.
In \bbooktitle{Harmonic analysis and nonlinear partial differential equations}.
\bseries{RIMS K\^{o}ky\^{u}roku Bessatsu, B26}
\bpages{93--132}.
\bpublisher{Res. Inst. Math. Sci. (RIMS), Kyoto}.
\bmrnumber{2883848}
\end{bincollection}
\endbibitem

\bibitem{BG15}
\begin{barticle}[author]
\bauthor{\bsnm{Bui-Thanh},~\bfnm{Tan}\binits{T.}} \AND
  \bauthor{\bsnm{Ghattas},~\bfnm{Omar}\binits{O.}}
(\byear{2015}).
\btitle{A scalable algorithm for {MAP} estimators in {B}ayesian inverse
  problems with {B}esov priors}.
\bjournal{Inverse Probl. Imaging}
\bvolume{9}
\bpages{27--53}.
\bdoi{10.3934/ipi.2015.9.27}
\bmrnumber{3305885}
\end{barticle}
\endbibitem

\bibitem{DR07}
\begin{barticle}[author]
\bauthor{\bsnm{Dalalyan},~\bfnm{Arnak}\binits{A.}} \AND
  \bauthor{\bsnm{Rei\ss},~\bfnm{Markus}\binits{M.}}
(\byear{2007}).
\btitle{Asymptotic statistical equivalence for ergodic diffusions: the
  multidimensional case}.
\bjournal{Probab. Theory Related Fields}
\bvolume{137}
\bpages{25--47}.
\bdoi{10.1007/s00440-006-0502-7}
\bmrnumber{2278451}
\end{barticle}
\endbibitem

\bibitem{D17}
\begin{barticle}[author]
\bauthor{\bsnm{Dalalyan},~\bfnm{Arnak~S.}\binits{A.~S.}}
(\byear{2017}).
\btitle{Theoretical guarantees for approximate sampling from smooth and
  log-concave densities}.
\bjournal{J. R. Stat. Soc. Ser. B. Stat. Methodol.}
\bvolume{79}
\bpages{651--676}.
\bdoi{10.1111/rssb.12183}
\bmrnumber{3641401}
\end{barticle}
\endbibitem

\bibitem{DHS12}
\begin{barticle}[author]
\bauthor{\bsnm{Dashti},~\bfnm{Masoumeh}\binits{M.}},
  \bauthor{\bsnm{Harris},~\bfnm{Stephen}\binits{S.}} \AND
  \bauthor{\bsnm{Stuart},~\bfnm{Andrew}\binits{A.}}
(\byear{2012}).
\btitle{Besov priors for {B}ayesian inverse problems}.
\bjournal{Inverse Probl. Imaging}
\bvolume{6}
\bpages{183--200}.
\bdoi{10.3934/ipi.2012.6.183}
\bmrnumber{2942737}
\end{barticle}
\endbibitem

\bibitem{DLSV13}
\begin{barticle}[author]
\bauthor{\bsnm{Dashti},~\bfnm{M.}\binits{M.}},
  \bauthor{\bsnm{Law},~\bfnm{K.~J.~H.}\binits{K.~J.~H.}},
  \bauthor{\bsnm{Stuart},~\bfnm{A.~M.}\binits{A.~M.}} \AND
  \bauthor{\bsnm{Voss},~\bfnm{J.}\binits{J.}}
(\byear{2013}).
\btitle{M{AP} estimators and their consistency in {B}ayesian nonparametric
  inverse problems}.
\bjournal{Inverse Problems}
\bvolume{29}
\bpages{095017, 27}.
\bdoi{10.1088/0266-5611/29/9/095017}
\bmrnumber{3104933}
\end{barticle}
\endbibitem

\bibitem{DS17}
\begin{bincollection}[author]
\bauthor{\bsnm{Dashti},~\bfnm{Masoumeh}\binits{M.}} \AND
  \bauthor{\bsnm{Stuart},~\bfnm{Andrew~M.}\binits{A.~M.}}
(\byear{2017}).
\btitle{The {B}ayesian approach to inverse problems}.
In \bbooktitle{Handbook of uncertainty quantification. {V}ol. 1, 2, 3}
\bpages{311--428}.
\bpublisher{Springer, Cham}.
\bmrnumber{3839555}
\end{bincollection}
\endbibitem

\bibitem{DJ96}
\begin{barticle}[author]
\bauthor{\bsnm{Delyon},~\bfnm{B.}\binits{B.}} \AND
  \bauthor{\bsnm{Juditsky},~\bfnm{A.}\binits{A.}}
(\byear{1996}).
\btitle{On minimax wavelet estimators}.
\bjournal{Appl. Comput. Harmon. Anal.}
\bvolume{3}
\bpages{215--228}.
\bdoi{10.1006/acha.1996.0017}
\bmrnumber{1400080}
\end{barticle}
\endbibitem

\bibitem{D15}
\begin{barticle}[author]
\bauthor{\bsnm{Dirksen},~\bfnm{Sjoerd}\binits{S.}}
(\byear{2015}).
\btitle{Tail bounds via generic chaining}.
\bjournal{Electron. J. Probab.}
\bvolume{20}
\bpages{no. 53, 29}.
\bdoi{10.1214/EJP.v20-3760}
\bmrnumber{3354613}
\end{barticle}
\endbibitem

\bibitem{DM17}
\begin{barticle}[author]
\bauthor{\bsnm{Durmus},~\bfnm{Alain}\binits{A.}} \AND
  \bauthor{\bsnm{Moulines},~\bfnm{\'{E}ric}\binits{E.}}
(\byear{2017}).
\btitle{Nonasymptotic convergence analysis for the unadjusted {L}angevin
  algorithm}.
\bjournal{Ann. Appl. Probab.}
\bvolume{27}
\bpages{1551--1587}.
\bdoi{10.1214/16-AAP1238}
\bmrnumber{3678479}
\end{barticle}
\endbibitem

\bibitem{DM19}
\begin{barticle}[author]
\bauthor{\bsnm{Durmus},~\bfnm{Alain}\binits{A.}} \AND
  \bauthor{\bsnm{Moulines},~\bfnm{\'{E}ric}\binits{E.}}
(\byear{2019}).
\btitle{High-dimensional {B}ayesian inference via the unadjusted {L}angevin
  algorithm}.
\bjournal{Bernoulli}
\bvolume{25}
\bpages{2854--2882}.
\bdoi{10.3150/18-BEJ1073}
\bmrnumber{4003567}
\end{barticle}
\endbibitem

\bibitem{DMP18}
\begin{barticle}[author]
\bauthor{\bsnm{Durmus},~\bfnm{Alain}\binits{A.}},
  \bauthor{\bsnm{Moulines},~\bfnm{\'{E}ric}\binits{E.}} \AND
  \bauthor{\bsnm{Pereyra},~\bfnm{Marcelo}\binits{M.}}
(\byear{2018}).
\btitle{Efficient {B}ayesian computation by proximal {M}arkov chain {M}onte
  {C}arlo: when {L}angevin meets {M}oreau}.
\bjournal{SIAM J. Imaging Sci.}
\bvolume{11}
\bpages{473--506}.
\bdoi{10.1137/16M1108340}
\bmrnumber{3763089}
\end{barticle}
\endbibitem

\bibitem{E2010}
\begin{bbook}[author]
\bauthor{\bsnm{Evans},~\bfnm{Lawrence~C.}\binits{L.~C.}}
(\byear{2010}).
\btitle{Partial differential equations},
\bedition{second} ed.
\bseries{Graduate Studies in Mathematics}
\bvolume{19}.
\bpublisher{American Mathematical Society, Providence, RI}.
\bdoi{10.1090/gsm/019}
\bmrnumber{2597943}
\end{bbook}
\endbibitem

\bibitem{F99}
\begin{bbook}[author]
\bauthor{\bsnm{Folland},~\bfnm{Gerald~B.}\binits{G.~B.}}
(\byear{1999}).
\btitle{Real analysis},
\bedition{second} ed.
\bseries{Pure and Applied Mathematics (New York)}.
\bpublisher{John Wiley \& Sons, Inc., New York}
\bnote{Modern techniques and their applications, A Wiley-Interscience
  Publication}.
\bmrnumber{1681462}
\end{bbook}
\endbibitem

\bibitem{GSMH19}
\begin{barticle}[author]
\bauthor{\bsnm{Garc\'{\i}a-Portugu\'{e}s},~\bfnm{Eduardo}\binits{E.}},
  \bauthor{\bsnm{S{\o}rensen},~\bfnm{Michael}\binits{M.}},
  \bauthor{\bsnm{Mardia},~\bfnm{Kanti~V.}\binits{K.~V.}} \AND
  \bauthor{\bsnm{Hamelryck},~\bfnm{Thomas}\binits{T.}}
(\byear{2019}).
\btitle{Langevin diffusions on the torus: estimation and applications}.
\bjournal{Stat. Comput.}
\bvolume{29}
\bpages{1--22}.
\bdoi{10.1007/s11222-017-9790-2}
\bmrnumber{3905536}
\end{barticle}
\endbibitem

\bibitem{GLCAP11}
\begin{barticle}[author]
\bauthor{\bsnm{Garralda},~\bfnm{Nora}\binits{N.}},
  \bauthor{\bsnm{Llatser},~\bfnm{Ignacio}\binits{I.}},
  \bauthor{\bsnm{Cabellos-Aparicio},~\bfnm{Albert}\binits{A.}},
  \bauthor{\bsnm{Alarc{\'o}n},~\bfnm{Eduard}\binits{E.}} \AND
  \bauthor{\bsnm{Pierobon},~\bfnm{Massimiliano}\binits{M.}}
(\byear{2011}).
\btitle{Diffusion-based physical channel identification in molecular
  nanonetworks}.
\bjournal{Nano Communication Networks}
\bvolume{2}
\bpages{196 - 204}.
\bdoi{https://doi.org/10.1016/j.nancom.2011.07.001}
\end{barticle}
\endbibitem

\bibitem{ghosal2000}
\begin{barticle}[author]
\bauthor{\bsnm{Ghosal},~\bfnm{Subhashis}\binits{S.}},
  \bauthor{\bsnm{Ghosh},~\bfnm{Jayanta~K.}\binits{J.~K.}} \AND
  \bauthor{\bparticle{van~der} \bsnm{Vaart},~\bfnm{Aad~W.}\binits{A.~W.}}
(\byear{2000}).
\btitle{Convergence rates of posterior distributions}.
\bjournal{Ann. Statist.}
\bvolume{28}
\bpages{500--531}.
\bdoi{10.1214/aos/1016218228}
\bmrnumber{1790007}
\end{barticle}
\endbibitem

\bibitem{ghosal2007}
\begin{barticle}[author]
\bauthor{\bsnm{Ghosal},~\bfnm{Subhashis}\binits{S.}} \AND
  \bauthor{\bparticle{van~der} \bsnm{Vaart},~\bfnm{Aad}\binits{A.}}
(\byear{2007}).
\btitle{Convergence rates of posterior distributions for non-i.i.d.
  observations}.
\bjournal{Ann. Statist.}
\bvolume{35}
\bpages{192--223}.
\bdoi{10.1214/009053606000001172}
\bmrnumber{2332274}
\end{barticle}
\endbibitem

\bibitem{ghosal2017}
\begin{bbook}[author]
\bauthor{\bsnm{Ghosal},~\bfnm{Subhashis}\binits{S.}} \AND
  \bauthor{\bparticle{van~der} \bsnm{Vaart},~\bfnm{Aad}\binits{A.}}
(\byear{2017}).
\btitle{Fundamentals of nonparametric {B}ayesian inference}.
\bseries{Cambridge Series in Statistical and Probabilistic Mathematics}
\bvolume{44}.
\bpublisher{Cambridge University Press, Cambridge}.
\bdoi{10.1017/9781139029834}
\bmrnumber{3587782}
\end{bbook}
\endbibitem

\bibitem{GN2011}
\begin{barticle}[author]
\bauthor{\bsnm{Gin\'{e}},~\bfnm{Evarist}\binits{E.}} \AND
  \bauthor{\bsnm{Nickl},~\bfnm{Richard}\binits{R.}}
(\byear{2011}).
\btitle{Rates of contraction for posterior distributions in {$L^r$}-metrics,
  {$1\leq r\leq\infty$}}.
\bjournal{Ann. Statist.}
\bvolume{39}
\bpages{2883--2911}.
\bdoi{10.1214/11-AOS924}
\bmrnumber{3012395}
\end{barticle}
\endbibitem

\bibitem{ginenickl2016}
\begin{bbook}[author]
\bauthor{\bsnm{Gin\'e},~\bfnm{Evarist}\binits{E.}} \AND
  \bauthor{\bsnm{Nickl},~\bfnm{Richard}\binits{R.}}
(\byear{2016}).
\btitle{Mathematical foundations of infinite-dimensional statistical models}.
\bpublisher{Cambridge University Press, New York}.
\bdoi{10.1017/CBO9781107337862}
\bmrnumber{3588285}
\end{bbook}
\endbibitem

\bibitem{GN20}
\begin{barticle}[author]
\bauthor{\bsnm{Giordano},~\bfnm{Matteo}\binits{M.}} \AND
  \bauthor{\bsnm{Nickl},~\bfnm{Richard}\binits{R.}}
(\byear{2020}).
\btitle{Consistency of {B}ayesian inference with {G}aussian process priors in
  an elliptic inverse problem}.
\bjournal{Inverse Problems}
\bvolume{36}
\bpages{085001--85036}.
\bmrnumber{4151406}
\end{barticle}
\endbibitem

\bibitem{supp}
\begin{bunpublished}[author]
\bauthor{\bsnm{Giordano},~\bfnm{Matteo}\binits{M.}} \AND
  \bauthor{\bsnm{Ray},~\bfnm{Kolyan}\binits{K.}}
(\byear{2021}).
\btitle{Supplement to: Nonparametric Bayesian inference for reversible
  multi-dimensional diffusions}.
\end{bunpublished}
\endbibitem

\bibitem{GRSH22}
\begin{barticle}[author]
\bauthor{\bsnm{{Giordano}},~\bfnm{Matteo}\binits{M.}},
  \bauthor{\bsnm{{Ray}},~\bfnm{Kolyan}\binits{K.}} \AND
  \bauthor{\bsnm{{Schmidt-Hieber}},~\bfnm{Johannes}\binits{J.}}
(\byear{2022}).
\btitle{{On the inability of Gaussian process regression to optimally learn
  compositional functions}}.
\bjournal{arXiv e-prints}
\bpages{arXiv:2205.07764}.
\end{barticle}
\endbibitem

\bibitem{GHR2004}
\begin{barticle}[author]
\bauthor{\bsnm{Gobet},~\bfnm{Emmanuel}\binits{E.}},
  \bauthor{\bsnm{Hoffmann},~\bfnm{Marc}\binits{M.}} \AND
  \bauthor{\bsnm{Rei\ss},~\bfnm{Markus}\binits{M.}}
(\byear{2004}).
\btitle{Nonparametric estimation of scalar diffusions based on low frequency
  data}.
\bjournal{Ann. Statist.}
\bvolume{32}
\bpages{2223--2253}.
\bdoi{10.1214/009053604000000797}
\bmrnumber{2102509}
\end{barticle}
\endbibitem

\bibitem{GS14}
\begin{barticle}[author]
\bauthor{\bsnm{Gugushvili},~\bfnm{Shota}\binits{S.}} \AND
  \bauthor{\bsnm{Spreij},~\bfnm{Peter}\binits{P.}}
(\byear{2014}).
\btitle{Nonparametric {B}ayesian drift estimation for multidimensional
  stochastic differential equations}.
\bjournal{Lith. Math. J.}
\bvolume{54}
\bpages{127--141}.
\end{barticle}
\endbibitem

\bibitem{BH15}
\begin{barticle}[author]
\bauthor{\bsnm{Helin},~\bfnm{T.}\binits{T.}} \AND
  \bauthor{\bsnm{Burger},~\bfnm{M.}\binits{M.}}
(\byear{2015}).
\btitle{Maximum {\it a posteriori} probability estimates in
  infinite-dimensional {B}ayesian inverse problems}.
\bjournal{Inverse Problems}
\bvolume{31}
\bpages{085009, 22}.
\bdoi{10.1088/0266-5611/31/8/085009}
\bmrnumber{3377106}
\end{barticle}
\endbibitem

\bibitem{JP16}
\begin{barticle}[author]
\bauthor{\bsnm{Jia},~\bfnm{Junxiong}\binits{J.}},
  \bauthor{\bsnm{Peng},~\bfnm{Jigen}\binits{J.}} \AND
  \bauthor{\bsnm{Gao},~\bfnm{Jinghuai}\binits{J.}}
(\byear{2016}).
\btitle{Bayesian approach to inverse problems for functions with a
  variable-index {B}esov prior}.
\bjournal{Inverse Problems}
\bvolume{32}
\bpages{085006, 32}.
\bdoi{10.1088/0266-5611/32/8/085006}
\bmrnumber{3535665}
\end{barticle}
\endbibitem

\bibitem{KLNS12}
\begin{barticle}[author]
\bauthor{\bsnm{Kolehmainen},~\bfnm{V.}\binits{V.}},
  \bauthor{\bsnm{Lassas},~\bfnm{M.}\binits{M.}},
  \bauthor{\bsnm{Niinim\"{a}ki},~\bfnm{K.}\binits{K.}} \AND
  \bauthor{\bsnm{Siltanen},~\bfnm{S.}\binits{S.}}
(\byear{2012}).
\btitle{Sparsity-promoting {B}ayesian inversion}.
\bjournal{Inverse Problems}
\bvolume{28}
\bpages{025005, 28}.
\bdoi{10.1088/0266-5611/28/2/025005}
\bmrnumber{2876856}
\end{barticle}
\endbibitem

\bibitem{K40}
\begin{barticle}[author]
\bauthor{\bsnm{Kramers},~\bfnm{H.~A.}\binits{H.~A.}}
(\byear{1940}).
\btitle{Brownian motion in a field of force and the diffusion model of chemical
  reactions}.
\bjournal{Physica}
\bvolume{7}
\bpages{284--304}.
\bmrnumber{2962}
\end{barticle}
\endbibitem

\bibitem{L12}
\begin{barticle}[author]
\bauthor{\bsnm{Lasanen},~\bfnm{Sari}\binits{S.}}
(\byear{2012}).
\btitle{Non-{G}aussian statistical inverse problems. {P}art {I}: {P}osterior
  distributions}.
\bjournal{Inverse Probl. Imaging}
\bvolume{6}
\bpages{215--266}.
\bdoi{10.3934/ipi.2012.6.215}
\bmrnumber{2942739}
\end{barticle}
\endbibitem

\bibitem{LSS09}
\begin{barticle}[author]
\bauthor{\bsnm{Lassas},~\bfnm{Matti}\binits{M.}},
  \bauthor{\bsnm{Saksman},~\bfnm{Eero}\binits{E.}} \AND
  \bauthor{\bsnm{Siltanen},~\bfnm{Samuli}\binits{S.}}
(\byear{2009}).
\btitle{Discretization-invariant {B}ayesian inversion and {B}esov space
  priors}.
\bjournal{Inverse Probl. Imaging}
\bvolume{3}
\bpages{87--122}.
\bdoi{10.3934/ipi.2009.3.87}
\bmrnumber{2558305}
\end{barticle}
\endbibitem

\bibitem{LM17}
\begin{bincollection}[author]
\bauthor{\bsnm{Latala},~\bfnm{Rafal}\binits{R.}} \AND
  \bauthor{\bsnm{Matlak},~\bfnm{Dariusz}\binits{D.}}
(\byear{2017}).
\btitle{Royen's proof of the {G}aussian correlation inequality}.
In \bbooktitle{Geometric aspects of functional analysis}.
\bseries{Lecture Notes in Math.}
\bvolume{2169}
\bpages{265--275}.
\bpublisher{Springer, Cham}.
\bmrnumber{3645127}
\end{bincollection}
\endbibitem

\bibitem{LP15}
\begin{barticle}[author]
\bauthor{\bsnm{Leporini},~\bfnm{D.}\binits{D.}} \AND
  \bauthor{\bsnm{Pesquet},~\bfnm{J.~C.}\binits{J.~C.}}
(\byear{2001}).
\btitle{Bayesian wavelet denoising: Besov priors and non-Gaussian noises}.
\bjournal{Signal Processing}
\bvolume{81}
\bpages{55-67}.
\bnote{Special section on Markov Chain Monte Carlo (MCMC) Methods for Signal
  Processing}.
\bdoi{https://doi.org/10.1016/S0165-1684(00)00190-0}
\end{barticle}
\endbibitem

\bibitem{LMS97}
\begin{barticle}[author]
\bauthor{\bsnm{Lepski},~\bfnm{O.~V.}\binits{O.~V.}},
  \bauthor{\bsnm{Mammen},~\bfnm{E.}\binits{E.}} \AND
  \bauthor{\bsnm{Spokoiny},~\bfnm{V.~G.}\binits{V.~G.}}
(\byear{1997}).
\btitle{Optimal spatial adaptation to inhomogeneous smoothness: an approach
  based on kernel estimates with variable bandwidth selectors}.
\bjournal{Ann. Statist.}
\bvolume{25}
\bpages{929--947}.
\bdoi{10.1214/aos/1069362731}
\bmrnumber{1447734}
\end{barticle}
\endbibitem

\bibitem{li1999}
\begin{barticle}[author]
\bauthor{\bsnm{Li},~\bfnm{Wenbo~V.}\binits{W.~V.}} \AND
  \bauthor{\bsnm{Linde},~\bfnm{Werner}\binits{W.}}
(\byear{1999}).
\btitle{Approximation, metric entropy and small ball estimates for {G}aussian
  measures}.
\bjournal{Ann. Probab.}
\bvolume{27}
\bpages{1556--1578}.
\bdoi{10.1214/aop/1022677459}
\bmrnumber{1733160}
\end{barticle}
\endbibitem

\bibitem{MRS20}
\begin{barticle}[author]
\bauthor{\bsnm{Mariucci},~\bfnm{Ester}\binits{E.}},
  \bauthor{\bsnm{Ray},~\bfnm{Kolyan}\binits{K.}} \AND
  \bauthor{\bsnm{Szab\'{o}},~\bfnm{Botond}\binits{B.}}
(\byear{2020}).
\btitle{A {B}ayesian nonparametric approach to log-concave density estimation}.
\bjournal{Bernoulli}
\bvolume{26}
\bpages{1070--1097}.
\bdoi{10.3150/19-BEJ1139}
\bmrnumber{4058361}
\end{barticle}
\endbibitem

\bibitem{MNP20b}
\begin{barticle}[author]
\bauthor{\bsnm{{Monard}},~\bfnm{Fran{\c{c}}ois}\binits{F.}},
  \bauthor{\bsnm{{Nickl}},~\bfnm{Richard}\binits{R.}} \AND
  \bauthor{\bsnm{{Paternain}},~\bfnm{Gabriel~P.}\binits{G.~P.}}
\btitle{{Statistical guarantees for Bayesian uncertainty quantification in
  non-linear inverse problems with Gaussian process priors}}.
\bjournal{Ann. Statist.}
\bpages{to appear}.
\end{barticle}
\endbibitem

\bibitem{MNP20}
\begin{barticle}[author]
\bauthor{\bsnm{Monard},~\bfnm{Fran\c{c}ois}\binits{F.}},
  \bauthor{\bsnm{Nickl},~\bfnm{Richard}\binits{R.}} \AND
  \bauthor{\bsnm{Paternain},~\bfnm{Gabriel~P.}\binits{G.~P.}}
(\byear{2021}).
\btitle{Consistent inversion of noisy non-{A}belian {X}-ray transforms}.
\bjournal{Comm. Pure Appl. Math.}
\bvolume{74}
\bpages{1045--1099}.
\bdoi{10.1002/cpa.21942}
\bmrnumber{4230066}
\end{barticle}
\endbibitem

\bibitem{nicklray2020}
\begin{barticle}[author]
\bauthor{\bsnm{Nickl},~\bfnm{Richard}\binits{R.}} \AND
  \bauthor{\bsnm{Ray},~\bfnm{Kolyan}\binits{K.}}
(\byear{2020}).
\btitle{Nonparametric statistical inference for drift vector fields of
  multi-dimensional diffusions}.
\bjournal{Ann. Statist.}
\bvolume{48}
\bpages{1383--1408}.
\bdoi{10.1214/19-AOS1851}
\bmrnumber{4124327}
\end{barticle}
\endbibitem

\bibitem{NS17}
\begin{barticle}[author]
\bauthor{\bsnm{Nickl},~\bfnm{Richard}\binits{R.}} \AND
  \bauthor{\bsnm{S\"ohl},~\bfnm{Jakob}\binits{J.}}
(\byear{2017}).
\btitle{Nonparametric {B}ayesian posterior contraction rates for discretely
  observed scalar diffusions}.
\bjournal{Ann. Statist.}
\bvolume{45}
\bpages{1664-1693}.
\end{barticle}
\endbibitem

\bibitem{NS20}
\begin{barticle}[author]
\bauthor{\bsnm{Nickl},~\bfnm{Richard}\binits{R.}} \AND
  \bauthor{\bsnm{Wang},~\bfnm{Sven}\binits{S.}}
(\byear{2020}).
\btitle{On polynomial-time computation of high-dimensional posterior measures
  by Langevin-type algorithms}.
\bjournal{arXiv e-prints arXiv 2009.05298}.
\end{barticle}
\endbibitem

\bibitem{PPRS12}
\begin{barticle}[author]
\bauthor{\bsnm{Papaspiliopoulos},~\bfnm{Omiros}\binits{O.}},
  \bauthor{\bsnm{Pokern},~\bfnm{Yvo}\binits{Y.}},
  \bauthor{\bsnm{Roberts},~\bfnm{Gareth~O.}\binits{G.~O.}} \AND
  \bauthor{\bsnm{Stuart},~\bfnm{Andrew~M.}\binits{A.~M.}}
(\byear{2012}).
\btitle{Nonparametric estimation of diffusions: a differential equations
  approach}.
\bjournal{Biometrika}
\bvolume{99}
\bpages{511--531}.
\end{barticle}
\endbibitem

\bibitem{PV01}
\begin{barticle}[author]
\bauthor{\bsnm{Pardoux},~\bfnm{E.}\binits{E.}} \AND
  \bauthor{\bsnm{Veretennikov},~\bfnm{A.~Yu.}\binits{A.~Y.}}
(\byear{2001}).
\btitle{On the {P}oisson equation and diffusion approximation. {I}}.
\bjournal{Ann. Probab.}
\bvolume{29}
\bpages{1061--1085}.
\end{barticle}
\endbibitem

\bibitem{P15}
\begin{barticle}[author]
\bauthor{\bsnm{Pereyra},~\bfnm{Marcelo}\binits{M.}}
(\byear{2016}).
\btitle{Proximal {M}arkov chain {M}onte {C}arlo algorithms}.
\bjournal{Stat. Comput.}
\bvolume{26}
\bpages{745--760}.
\bdoi{10.1007/s11222-015-9567-4}
\bmrnumber{3515019}
\end{barticle}
\endbibitem

\bibitem{PS10}
\begin{barticle}[author]
\bauthor{\bsnm{Pinski},~\bfnm{F.~J.}\binits{F.~J.}} \AND
  \bauthor{\bsnm{Stuart},~\bfnm{A.~M.}\binits{A.~M.}}
(\byear{2010}).
\btitle{Transition paths in molecules: gradient descent in pathspace}.
\bjournal{Journal of Chemical Physics}
\bvolume{132}
\bpages{184104}.
\end{barticle}
\endbibitem

\bibitem{pinski2012}
\begin{barticle}[author]
\bauthor{\bsnm{Pinski},~\bfnm{F.~J.}\binits{F.~J.}},
  \bauthor{\bsnm{Stuart},~\bfnm{A.~M.}\binits{A.~M.}} \AND
  \bauthor{\bsnm{Theil},~\bfnm{F.}\binits{F.}}
(\byear{2012}).
\btitle{{$\Gamma$}-limit for transition paths of maximal probability}.
\bjournal{J. Stat. Phys.}
\bvolume{146}
\bpages{955--974}.
\bdoi{10.1007/s10955-012-0443-8}
\bmrnumber{2902449}
\end{barticle}
\endbibitem

\bibitem{PSvZ13}
\begin{barticle}[author]
\bauthor{\bsnm{Pokern},~\bfnm{Y.}\binits{Y.}},
  \bauthor{\bsnm{Stuart},~\bfnm{A.~M.}\binits{A.~M.}} \AND
  \bauthor{\bparticle{van} \bsnm{Zanten},~\bfnm{J.~H.}\binits{J.~H.}}
(\byear{2013}).
\btitle{Posterior consistency via precision operators for {B}ayesian
  nonparametric drift estimation in {SDE}s}.
\bjournal{Stochastic Process. Appl.}
\bvolume{123}
\bpages{603--628}.
\bdoi{10.1016/j.spa.2012.08.010}
\bmrnumber{3003365}
\end{barticle}
\endbibitem

\bibitem{protter2004}
\begin{bbook}[author]
\bauthor{\bsnm{Protter},~\bfnm{Philip~E.}\binits{P.~E.}}
(\byear{2004}).
\btitle{Stochastic integration and differential equations},
\bedition{second} ed.
\bseries{Applications of Mathematics (New York)}
\bvolume{21}.
\bpublisher{Springer-Verlag, Berlin}
\bnote{Stochastic Modelling and Applied Probability}.
\bmrnumber{2020294}
\end{bbook}
\endbibitem

\bibitem{REtAl06}
\begin{barticle}[author]
\bauthor{\bsnm{Rantala},~\bfnm{Maaria}\binits{M.}},
  \bauthor{\bsnm{V{\"a}nsk{\"a}},~\bfnm{Simopekka}\binits{S.}},
  \bauthor{\bsnm{J{\"a}rvenp{\"a}{\"a}},~\bfnm{Seppo}\binits{S.}},
  \bauthor{\bsnm{Kalke},~\bfnm{Martti}\binits{M.}},
  \bauthor{\bsnm{Lassas},~\bfnm{Matti}\binits{M.}},
  \bauthor{\bsnm{Moberg},~\bfnm{Jan}\binits{J.}} \AND
  \bauthor{\bsnm{Siltanen},~\bfnm{Samuli}\binits{S.}}
(\byear{2006}).
\btitle{Wavelet-based reconstruction for limited-angle X-ray tomography}.
\bjournal{IEEE Transactions on Medical Imaging}
\bvolume{25}
\bpages{210--217}.
\bdoi{10.1109/TMI.2005.862206}
\end{barticle}
\endbibitem

\bibitem{RW06}
\begin{bbook}[author]
\bauthor{\bsnm{Rasmussen},~\bfnm{Carl~Edward}\binits{C.~E.}} \AND
  \bauthor{\bsnm{Williams},~\bfnm{Christopher K.~I.}\binits{C.~K.~I.}}
(\byear{2006}).
\btitle{Gaussian processes for machine learning}.
\bseries{Adaptive Computation and Machine Learning}.
\bpublisher{MIT Press, Cambridge, MA}.
\bmrnumber{2514435}
\end{bbook}
\endbibitem

\bibitem{R13}
\begin{barticle}[author]
\bauthor{\bsnm{Ray},~\bfnm{Kolyan}\binits{K.}}
(\byear{2013}).
\btitle{Bayesian inverse problems with non-conjugate priors}.
\bjournal{Electron. J. Stat.}
\bvolume{7}
\bpages{2516--2549}.
\end{barticle}
\endbibitem

\bibitem{revuz1999}
\begin{bbook}[author]
\bauthor{\bsnm{Revuz},~\bfnm{Daniel}\binits{D.}} \AND
  \bauthor{\bsnm{Yor},~\bfnm{Marc}\binits{M.}}
(\byear{1999}).
\btitle{Continuous martingales and {B}rownian motion}
\bvolume{293},
\bedition{Third} ed.
\bpublisher{Springer-Verlag, Berlin}.
\bdoi{10.1007/978-3-662-06400-9}
\bmrnumber{1725357}
\end{bbook}
\endbibitem

\bibitem{R14}
\begin{barticle}[author]
\bauthor{\bsnm{Royen},~\bfnm{Thomas}\binits{T.}}
(\byear{2014}).
\btitle{A simple proof of the {G}aussian correlation conjecture extended to
  some multivariate gamma distributions}.
\bjournal{Far East J. Theor. Stat.}
\bvolume{48}
\bpages{139--145}.
\bmrnumber{3289621}
\end{barticle}
\endbibitem

\bibitem{RBO13}
\begin{binproceedings}[author]
\bauthor{\bsnm{Ruttor},~\bfnm{Andreas}\binits{A.}},
  \bauthor{\bsnm{Batz},~\bfnm{Philipp}\binits{P.}} \AND
  \bauthor{\bsnm{Opper},~\bfnm{Manfred}\binits{M.}}
(\byear{2013}).
\btitle{Approximate {G}aussian process inference for the drift function in
  stochastic differential equations}.
In \bbooktitle{Advances in Neural Information Processing Systems}
\bvolume{26}
\bpages{2040--2048}.
\end{binproceedings}
\endbibitem

\bibitem{SE15}
\begin{binproceedings}[author]
\bauthor{\bsnm{Sakhaee},~\bfnm{Elham}\binits{E.}} \AND
  \bauthor{\bsnm{Entezari},~\bfnm{Alireza}\binits{A.}}
(\byear{2015}).
\btitle{{Spline-based sparse tomographic reconstruction with Besov priors}}.
In \bbooktitle{Medical Imaging 2015: Image Processing}
(\beditor{\bfnm{S{\'e}bastien}\binits{S.}~\bsnm{Ourselin}} \AND
  \beditor{\bfnm{Martin~A.}\binits{M.~A.}~\bsnm{Styner}}, eds.)
\bvolume{9413}
\bpages{101 -- 108}.
\borganization{International Society for Optics and Photonics}.
\bpublisher{SPIE}.
\bdoi{10.1117/12.2082797}
\end{binproceedings}
\endbibitem

\bibitem{vdMS17b}
\begin{barticle}[author]
\bauthor{\bsnm{Schauer},~\bfnm{Moritz}\binits{M.}},
  \bauthor{\bparticle{van~der} \bsnm{Meulen},~\bfnm{Frank}\binits{F.}} \AND
  \bauthor{\bparticle{van} \bsnm{Zanten},~\bfnm{Harry}\binits{H.}}
(\byear{2017}).
\btitle{Guided proposals for simulating multi-dimensional diffusion bridges}.
\bjournal{Bernoulli}
\bvolume{23}
\bpages{2917--2950}.
\bdoi{10.3150/16-BEJ833}
\bmrnumber{3648050}
\end{barticle}
\endbibitem

\bibitem{ST87}
\begin{bbook}[author]
\bauthor{\bsnm{Schmeisser},~\bfnm{Hans-J\"urgen}\binits{H.-J.}} \AND
  \bauthor{\bsnm{Triebel},~\bfnm{Hans}\binits{H.}}
(\byear{1987}).
\btitle{Topics in {F}ourier analysis and function spaces}.
\bseries{A Wiley-Interscience Publication}.
\bpublisher{John Wiley \& Sons, Ltd., Chichester}.
\bmrnumber{891189}
\end{bbook}
\endbibitem

\bibitem{S13}
\begin{barticle}[author]
\bauthor{\bsnm{Schmisser},~\bfnm{Emeline}\binits{E.}}
(\byear{2013}).
\btitle{Penalized nonparametric drift estimation for a multidimensional
  diffusion process}.
\bjournal{Statistics}
\bvolume{47}
\bpages{61--84}.
\bdoi{10.1080/02331888.2011.591931}
\bmrnumber{3023015}
\end{barticle}
\endbibitem

\bibitem{S80}
\begin{barticle}[author]
\bauthor{\bsnm{Schuss},~\bfnm{Zeev}\binits{Z.}}
(\byear{1980}).
\btitle{Singular perturbation methods in stochastic differential equations of
  mathematical physics}.
\bjournal{SIAM Rev.}
\bvolume{22}
\bpages{119--155}.
\bdoi{10.1137/1022024}
\bmrnumber{564560}
\end{barticle}
\endbibitem

\bibitem{S15}
\begin{barticle}[author]
\bauthor{\bsnm{Strauch},~\bfnm{Claudia}\binits{C.}}
(\byear{2015}).
\btitle{Sharp adaptive drift estimation for ergodic diffusions: the
  multivariate case}.
\bjournal{Stochastic Process. Appl.}
\bvolume{125}
\bpages{2562--2602}.
\bdoi{10.1016/j.spa.2015.02.003}
\bmrnumber{3332848}
\end{barticle}
\endbibitem

\bibitem{S16}
\begin{barticle}[author]
\bauthor{\bsnm{Strauch},~\bfnm{Claudia}\binits{C.}}
(\byear{2016}).
\btitle{Exact adaptive pointwise drift estimation for multidimensional ergodic
  diffusions}.
\bjournal{Probab. Theory Related Fields}
\bvolume{164}
\bpages{361--400}.
\bdoi{10.1007/s00440-014-0614-4}
\bmrnumber{3449393}
\end{barticle}
\endbibitem

\bibitem{S18}
\begin{barticle}[author]
\bauthor{\bsnm{Strauch},~\bfnm{Claudia}\binits{C.}}
(\byear{2018}).
\btitle{Adaptive invariant density estimation for ergodic diffusions over
  anisotropic classes}.
\bjournal{Ann. Statist.}
\bvolume{46}
\bpages{3451--3480}.
\bdoi{10.1214/17-AOS1664}
\bmrnumber{3852658}
\end{barticle}
\endbibitem

\bibitem{T14}
\begin{bbook}[author]
\bauthor{\bsnm{Talagrand},~\bfnm{Michel}\binits{M.}}
(\byear{2014}).
\btitle{Upper and lower bounds for stochastic processes}.
\bpublisher{Springer, Heidelberg}.
\bdoi{10.1007/978-3-642-54075-2}
\bmrnumber{3184689}
\end{bbook}
\endbibitem

\bibitem{T10}
\begin{bbook}[author]
\bauthor{\bsnm{Triebel},~\bfnm{Hans}\binits{H.}}
(\byear{2010}).
\btitle{Theory of function spaces}.
\bseries{Modern Birkh\"{a}user Classics}.
\bpublisher{Birkh\"{a}user/Springer Basel AG, Basel}
\bnote{Reprint of 1983 edition [MR0730762], Also published in 1983 by
  Birkh\"{a}user Verlag [MR0781540]}.
\bmrnumber{3024598}
\end{bbook}
\endbibitem

\bibitem{vdMS17}
\begin{barticle}[author]
\bauthor{\bparticle{van~der} \bsnm{Meulen},~\bfnm{Frank}\binits{F.}} \AND
  \bauthor{\bsnm{Schauer},~\bfnm{Moritz}\binits{M.}}
(\byear{2017}).
\btitle{Bayesian estimation of discretely observed multi-dimensional diffusion
  processes using guided proposals}.
\bjournal{Electron. J. Stat.}
\bvolume{11}
\bpages{2358--2396}.
\bdoi{10.1214/17-EJS1290}
\bmrnumber{3656495}
\end{barticle}
\endbibitem

\bibitem{vdMeulen2006}
\begin{barticle}[author]
\bauthor{\bparticle{van~der} \bsnm{Meulen},~\bfnm{F.~H.}\binits{F.~H.}},
  \bauthor{\bparticle{van~der} \bsnm{Vaart},~\bfnm{A.~W.}\binits{A.~W.}} \AND
  \bauthor{\bparticle{van} \bsnm{Zanten},~\bfnm{J.~H.}\binits{J.~H.}}
(\byear{2006}).
\btitle{Convergence rates of posterior distributions for {B}rownian
  semimartingale models}.
\bjournal{Bernoulli}
\bvolume{12}
\bpages{863--888}.
\bdoi{10.3150/bj/1161614950}
\bmrnumber{2265666}
\end{barticle}
\endbibitem

\bibitem{vVvZ07}
\begin{barticle}[author]
\bauthor{\bparticle{van~der} \bsnm{Vaart},~\bfnm{Aad}\binits{A.}} \AND
  \bauthor{\bparticle{van} \bsnm{Zanten},~\bfnm{Harry}\binits{H.}}
(\byear{2007}).
\btitle{Bayesian inference with rescaled {G}aussian process priors}.
\bjournal{Electron. J. Stat.}
\bvolume{1}
\bpages{433--448}.
\bdoi{10.1214/07-EJS098}
\bmrnumber{2357712}
\end{barticle}
\endbibitem

\bibitem{V98}
\begin{bbook}[author]
\bauthor{\bparticle{van~der} \bsnm{Vaart},~\bfnm{A.~W.}\binits{A.~W.}}
(\byear{1998}).
\btitle{Asymptotic statistics}.
\bseries{Cambridge Series in Statistical and Probabilistic Mathematics}
\bvolume{3}.
\bpublisher{Cambridge University Press, Cambridge}.
\bdoi{10.1017/CBO9780511802256}
\bmrnumber{1652247}
\end{bbook}
\endbibitem

\bibitem{vdvaart2008}
\begin{barticle}[author]
\bauthor{\bparticle{van~der} \bsnm{Vaart},~\bfnm{A.~W.}\binits{A.~W.}} \AND
  \bauthor{\bparticle{van} \bsnm{Zanten},~\bfnm{J.~H.}\binits{J.~H.}}
(\byear{2008}).
\btitle{Rates of contraction of posterior distributions based on {G}aussian
  process priors}.
\bjournal{Ann. Statist.}
\bvolume{36}
\bpages{1435--1463}.
\bmrnumber{2418663}
\end{barticle}
\endbibitem

\bibitem{vW2019}
\begin{barticle}[author]
\bauthor{\bsnm{{van Waaij}},~\bfnm{Jan}\binits{J.}}
(\byear{2019}).
\btitle{{Adaptive posterior contraction rates for empirical Bayesian drift
  estimation of a diffusion}}.
\bjournal{arXiv e-prints}
\bpages{arXiv:1909.12710}.
\end{barticle}
\endbibitem

\bibitem{vWvZ16}
\begin{barticle}[author]
\bauthor{\bparticle{van} \bsnm{Waaij},~\bfnm{Jan}\binits{J.}} \AND
  \bauthor{\bparticle{van} \bsnm{Zanten},~\bfnm{Harry}\binits{H.}}
(\byear{2016}).
\btitle{Gaussian process methods for one-dimensional diffusions: optimal rates
  and adaptation}.
\bjournal{Electron. J. Stat.}
\bvolume{10}
\bpages{628--645}.
\bdoi{10.1214/16-EJS1117}
\bmrnumber{3471991}
\end{barticle}
\endbibitem

\bibitem{VEtAl09}
\begin{barticle}[author]
\bauthor{\bsnm{V\"{a}nsk\"{a}},~\bfnm{Simopekka}\binits{S.}},
  \bauthor{\bsnm{Lassas},~\bfnm{Matti}\binits{M.}} \AND
  \bauthor{\bsnm{Siltanen},~\bfnm{Samuli}\binits{S.}}
(\byear{2009}).
\btitle{Statistical {X}-ray tomography using empirical {B}esov priors}.
\bjournal{Int. J. Tomogr. Stat.}
\bvolume{11}
\bpages{3--32}.
\bmrnumber{2505762}
\end{barticle}
\endbibitem

\end{thebibliography}

\end{document}